\documentclass[11pt,reqno]{amsart}
\usepackage{graphicx}
\usepackage{float}
\usepackage[caption = false]{subfig}
\usepackage{framed}
\usepackage{enumerate}
\usepackage{mathtools}
\usepackage{breqn}
\usepackage{framed}
\usepackage{array, geometry, graphicx}
\usepackage{amsmath,amsfonts,paralist,amssymb,amsthm,mathrsfs}
\usepackage[export]{adjustbox}
\usepackage{setspace}
\usepackage{pdfpages}
\usepackage{wasysym}
\usepackage[skip=1ex, labelfont=bf, font=footnotesize]{caption}
\usepackage{lipsum}
\input{insbox}
\textwidth=465pt \evensidemargin=0pt \oddsidemargin=0pt
\marginparsep=8pt \marginparpush=8pt \textheight=650pt
\topmargin=-25pt

\setlength{\parskip}{2.5pt}

\newtheorem{example}{Example}
\newtheorem{theorem}{Theorem}[section]
\newtheorem{corollary}[theorem]{Corollary}

\theoremstyle{definition}
\newtheorem{definition}[theorem]{Definition}
\theoremstyle{remark}

\numberwithin{equation}{section}

\DeclareMathOperator{\sech}{sech}
\DeclareMathOperator{\csch}{csch}

\theoremstyle{theorem}

\newtheorem{theo}{Lemma}
\newcounter{tmp}

\begin{document}
	
\title[Briot-Bouquet Differential Subordination]{On Sufficient conditions  for the class $\mathcal{S}^{*}_{\cosh \sqrt{z}}$}

	\thanks{The second author is supported by Delhi Technological University, New Delhi}
 	\author{S. Sivaprasad Kumar}
	\address{Department of Applied Mathematics, Delhi Technological University, Delhi--110042, India}
\email{spkumar@dce.ac.in}
	\author{Mridula Mundalia}
	\address{Department of Applied Mathematics, Delhi Technological University, Delhi--110042, India}
\email{mridulamundalia@yahoo.co.in}

	\subjclass[2010]{30C45, 30C80}
	
	\keywords{Univalent functions; Starlike functions; Differential subordination; Briot-Bouquet differential subordination; Hyperbolic cosine function}
\begin{abstract}
Using differential subordination technique, such as Briot-Bouquet and others, we establish sufficient conditions for functions to be in a class $\mathcal{S}^{*}_{\varrho},$ consisting of  starlike functions that are associated with $\varrho(z):=\cosh \sqrt{z}.$ 
Furthermore, by employing admissibility conditions, we obtain various differential subordination results pertaining to the class $\mathcal{S}^{*}_{\varrho}.$


       
\end{abstract}

\maketitle
	
\section{Introduction}

  Let $\mathcal{A}$ be the class of all analytic functions, defined on the open unit disc $\mathbb{D}:=\left\{z\in\mathbb{C}:|z|<1\right\},$ of the form $f(z)=z+a_{2}z^{2}+a_{3}z^{3}+\cdots .$ Let $\mathcal{S}$ denote the class of all univalent functions in $\mathcal{A}.$ Given two analytic functions $f(z)$ and $g(z)$ in $\mathbb{D},$  $f(z)$ is said to be subordinate to $g(z),$ symbolically $f\prec g,$ if there exists a Schwarz function $w(z)$ such that $w(0)=0$ and $f(z)=g(w(z)).$ Especially, if $g(z)$ is a univalent function in $\mathbb{D}$, then $f\prec g$ if and only if the two conditions 
$f(0)=g(0) \text{ and } f(\mathbb{D})\subset g(\mathbb{D})$ are met.
In 1992, Ma and Minda \cite{Ma & Minda} investigated a special subclass of $\mathcal{S},$ namely $\mathcal{S}^{*}(\phi),$ given by
\[\mathcal{S}^{*}(\phi)=\left\{f\in\mathcal{A}:\frac{zf'(z)}{f(z)}\prec \phi(z), \;z\in\mathbb{D}\right\},\]
where certain conditions are imposed on the analytic function $\phi(z),$ such as:
$\phi(z)$ is starlike with respect to $\phi(0)=1,$  $\phi(z)$ is univalent in $\mathbb{D}$ such that $\phi'(0)>0,$ $\phi(z)=\overline{\phi(\bar{z})}$ and $\operatorname{Re} \phi(z)>0,$ for each $z\in\mathbb{D}.$ 
The class $\mathcal{S}^{*}(\phi)$ has been extensively studied by various authors for different choices of $\phi(z),$ see \cite{Cho & Virender(2019), Ebadian(2020),Goel & Sivaprasad(2020),kumar & Ravichandran(2013)} and the references therein. Moreover, these subclasses represent instances, where the properties of $\phi(z)$ have been tailored to investigate certain geometrical properties of functions lying in the class $\mathcal{S}^{*}(\phi)$. We provide a table (see Table \ref{coshsubdnTable1}), enlisting some subclasses of starlike functions, obtained for special choices of $\phi(z).$ 
The classes $\mathcal{S}^{*}_{L},$ $\mathcal{S}^{*}[A,B],$ $\mathcal{S}^{*}_{s},$ $\mathcal{S}^{*}_{e}$ and $\mathcal{S}^{*}_{\rightmoon}$ (see Table \ref{coshsubdnTable1}) have garnered significant attention in the past, notably in the works by \cite{Aouf n Sokol, Janowski, Masih n Kanas, Mendiratta n Nagpal,Raina}. 
Inspired by the Ma and Minda subclasses of starlike functions, recently we introduced and studied the following subclass of starlike functions associated with a hyperbolic cosine function \cite{Mundalia(2022)}, defined as
\begin{equation*}
\mathcal{S}^{*}_{\varrho}:=\left\{f\in\mathcal{A}:\frac{zf'(z)}{f(z)}\prec \cosh \sqrt{z}=:\varrho(z) ,\text{ }z\in\mathbb{D}\right\},
\end{equation*}
where the branch of the square root function is so chosen such that \begin{equation}\label{e57}
\cosh\sqrt{z}=1+\frac{z}{2!}+\frac{z^{2}}{4!}+\cdots=\sum_{n=0}^{\infty}\frac{z^{n}}{(2n)!}.
\end{equation} 
Note that the conformal mapping $\varrho:\mathbb{D}\to \mathbb{C},$ maps the unit disc $\mathbb{D}$ onto the region
\begin{align}\label{e65}
\Omega_{\varrho}:=\{\omega\in\mathbb{C}:|\log(\omega+\sqrt{\omega^{2}-1})|^{2}<1\},
\end{align}
defined on the principle branch of logarithmic and square root function.
Miller and Mocanu extensively studied the theory of differential subordination, see \cite{Miller & Mocanu}, apart from others \cite{Bohra n Ravi (2021),Ebadian(2020),Goel & Sivaprasad(2020),Keong(2013),Miller & Mocanu,Sharma n Ravi(2021),Sukhjit Singh(2009)}. Recently, several authors have established conditions on $\eta,$ so that $1+\eta z p'(z)/p^{n}(z)\prec \phi(z)$ $(n=0,1,2)$ implies $p(z)\prec \psi(z),$ these conditions vary depending on the choice of $\phi(z)$ and $\psi(z)$ such as  $2/(1+e^{-z}),$ $\sqrt{1+z},$ $(1+Az)/(1+Bz)$ $(-1\leq B<A\leq 1)$ and $e^{z},$ see \cite{Goel & Sivaprasad(2020),Janowski,kumar & Ravichandran(2013),Mendiratta n Nagpal}. 
Further, the regions corresponding to functions $e^{z},$ $z+\sqrt{1+z^{2}},$ $(1+Az)/(1+Bz)$  and $(1+sz)^{2},$  respectively are 
\begin{align}
 \Omega_{e}:&=\{\omega\in\mathbb{C}:|\log \omega|<1\},\nonumber \\
		\Omega_{\rightmoon}:&=\{\omega\in\mathbb{C}:|\omega^{2}-1|<2|w|\}\nonumber\\
  &=\{\omega\in\mathbb{C}:|\omega-1|< \sqrt{2}\} \cap \{\omega\in\mathbb{C}: |\omega+1|> \sqrt{2}\}=\Delta_{1}\cap \Delta_{2},\label{crescentregion}\\
  \Omega_{A,B}:&=\{w\in\mathbb{C}:|\omega-1|<|A-B \omega|\}\nonumber \\
  \Omega_{s}:&=\{u+i v:((u-1)^{2}+v^{2}-s^{4})^{2}<4 s^{2}((u-1+s^{2})^{2}+v^{2})\}\nonumber\\
  &\quad \subset \{\omega\in\mathbb{C}:|\omega-1|< |s|(|s|+2)\} \label{e66}.
\end{align}


\begin{table}[ht]
	\renewcommand{\arraystretch}{1.18}
	\caption{List of some notable classes of starlike functions} 
	\centering 
	\begin{tabular}{|l|l|l|l|} 
		\hline 
		{\bf{$\mathcal{S}^{*}(\phi)$}}  & \textbf{$\phi(z)$}  & \textbf{References}     \\ [0.9ex] 
		\hline  
		$\mathcal{S}^{*}_{e}$       &   $\phi_{e}(z):=e^{z}$ &   \cite{Mendiratta n Nagpal} Mendiratta et al.  \\ 
		$\mathcal{S}^{*}_{L}$    &   $\phi_{L}(z):=\sqrt{1+z}$  &  \cite{Sokol n Stankiewicz} Sok\'{o}\l \ et al.  \\ 
		$\mathcal{S}^{*}_{\rightmoon}$     &  $\phi_{\rightmoon}(z):=z+\sqrt{1+z^2}$   &     \cite{Raina} Raina et al. \\  
		$\mathcal{S}^{*}_{s}$       &   $\phi_{s}(z):=(1+sz)^{2},$ $-1/\sqrt{2} \leq s \leq 1/\sqrt{2},$ $s\neq 0$  &    \cite{Masih n Kanas} Masih et al.  \\ 
		$\mathcal{S}^{*}[A,B]$   &  $\phi_{A,B}(z):=(1+Az)/(1+Bz),$ $-1\leq B<A\leq 1$  & \cite{Janowski}  W. Janowski \\ 
		\hline 
	\end{tabular}
	\label{coshsubdnTable1}
\end{table}
 \InsertBoxR{-1}{\begin{minipage}{0.4\linewidth} 
      \begin{center}
      \hspace*{0cm}
        \includegraphics[width=\linewidth]{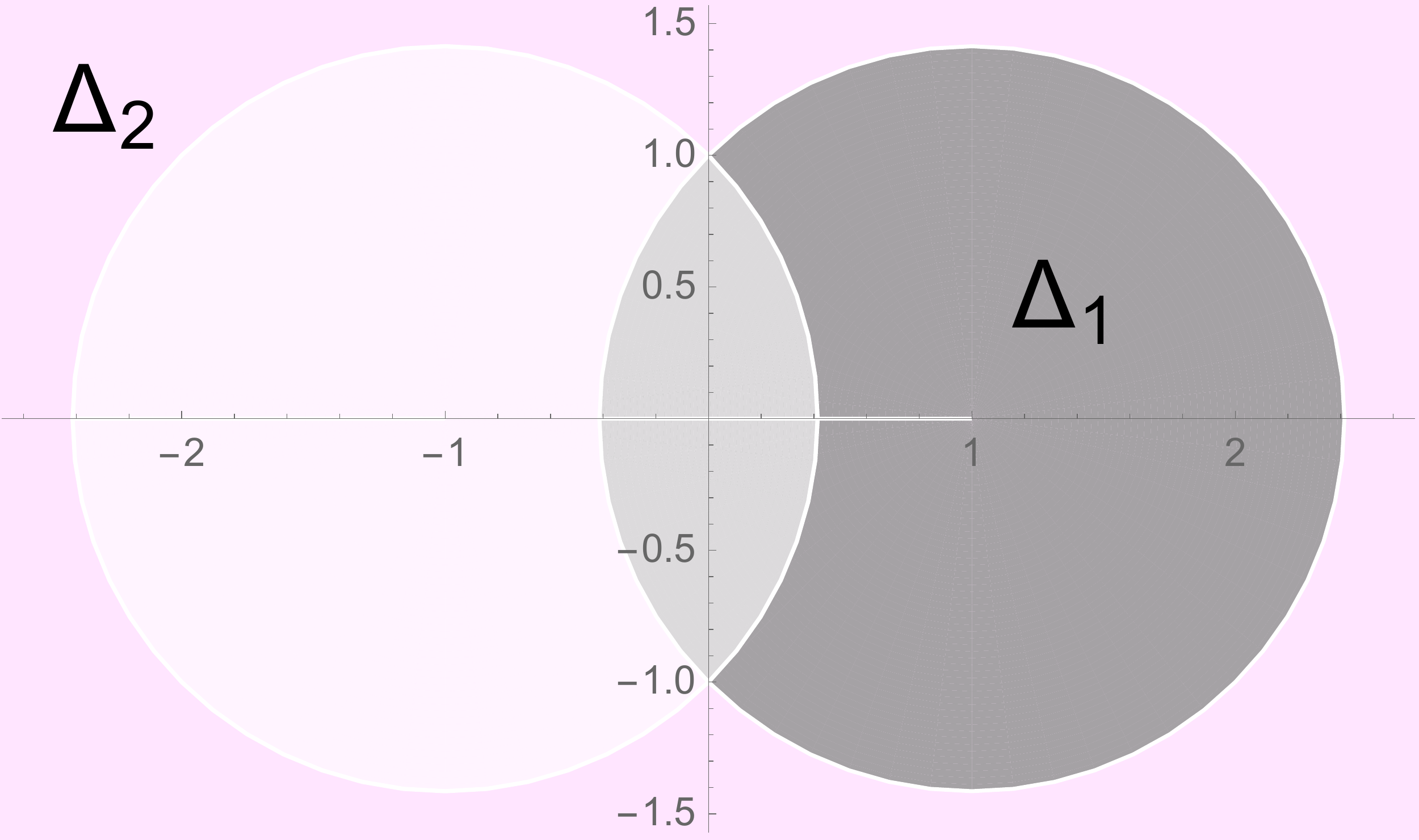}
      \end{center}
      \captionof{figure}{$\Omega_{\rightmoon}=\Delta_{1}\cap\Delta_{2}.$} 
\label{fig9}
                   \end{minipage}%
                   }[5]  
From Figure~\ref{fig9}, we see that the above crescent domain is given by $\Omega_{\rightmoon}=\Delta_{1}\cap\Delta_{2},$ where $\Delta_{1}=\{w:|w-1|<\sqrt{2}\}$ and $\Delta_{2}=\{w:|w+1|>\sqrt{2}\}.$ Generally,  while dealing with the crescent domain, we conclude that if $x\notin\Delta_{i},$ then $x\notin\Omega_{\rightmoon},$ as $\Omega_{\rightmoon}\subset \Delta_{i},$ where $i=1,2.$

In the present investigation, we explore sufficient conditions for functions to be in $\mathcal{S}^{*}_{\varrho}$ by establishing Briot-Bouquet differential subordination implications with dominants such as $e^{z},$ $(1+Az)/(1+Bz),$ $z+\sqrt{1+z^{2}}$ and $(1+sz)^{2}.$ Additionally, we derive some first-order differential subordination results for $\mathcal{S}^{*}_{\varrho}$ and diagrammatically validate the sharpness of our findings. Finally, we deduce certain admissibility results for  $\mathcal{S}^{*}_\varrho,$ accompanied by some applications  and illustrations of our findings.



\section{Briot-Bouquet Differential Subordination Results}
\noindent Numerous studies have delved into the Briot-Bouquet differential subordination, given by 
\begin{equation}\label{e63}
	p(z)+\frac{zp'(z)}{\eta p(z) + \gamma}\prec h(z),
\end{equation} 
with contributions from various authors over time. For comprehensive insights, refer to \cite{Bohra n Ravi (2021),Keong(2013),Miller & Mocanu}.
Specifically, one can refer to the works of Ravichandran et al. \cite{Sharma n Ravi(2021)} and Singh et al. \cite{Sukhjit Singh(2009)} for notable contributions to Briot-Bouquet differential subordination results.
This unique form of differential subordination holds significant importance in univalent function theory and has a wide range of applications.
Additionally,  it's common to analyze the implication results related to \eqref{e63} with the assumption that $h(z)$ is a convex function and $\operatorname{Re}(\eta h(z)+\gamma)>0.$ Following a similar approach, we derive Briot-Bouquet differential subordination results for the class $\mathcal{S}^{*}_{\varrho}$.  
To proceed, we require the following lemma, which will be instrumental in establishing some of our main results in this section. 
\begingroup
\setcounter{tmp}{\value{theo}}
\setcounter{theo}{0} 
\renewcommand\thetheo{\Alph{theo}}
\begin{theo}\cite[Lemma 1.3, p.28] {Ruscheweyh} \label{l8}
Let $w$ be a meromorphic function in $\mathbb{D},$ $w(0)=0.$ If for some $z_{0}\in\mathbb{D},$ $\displaystyle{\max _{|z|\leq |z_{0}|} }|w(z)|=|w(z_{0})|,$ then it follows that $z_{0}w'(z_{0})/w(z_{0}) \geq 1.$
\end{theo}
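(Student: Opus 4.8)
The statement is the classical Jack--Ruscheweyh boundary‑maximum lemma, so the plan is the standard maximum‑modulus argument, with a preliminary reduction to cover the meromorphic hypothesis. First I would observe that the hypothesis only bites — and forces $w$ to be holomorphic — on the closed disc $\{|z|\le r\}$, where $r:=|z_0|$: since $\max_{|z|\le r}|w(z)|=|w(z_0)|$ is finite, $w$ can have no pole in $\{|z|\le r\}$, so $w$ is holomorphic there; and if $w(z_0)=0$ the maximum is $0$, forcing $w\equiv 0$ on $\{|z|\le r\}$ (hence on $\mathbb{D}$), a degenerate case in which the asserted quotient is meaningless and is therefore tacitly excluded. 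Thus $z_0\ne 0$ and $w(z_0)\ne 0$. Next I would introduce the auxiliary function $h(z):=w(z)/z$, holomorphic on $\{|z|\le r\}$ (the singularity at $0$ is removable), record the elementary identity $z w'(z)/w(z)=1+z h'(z)/h(z)$, and note that by the maximum modulus principle $|h|$ attains its maximum over $\{|z|\le r\}$ at the boundary point $z_0$, because on $|z|=r$ one has $|h(z)|=|w(z)|/r\le |w(z_0)|/r=|h(z_0)|$.

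The heart of the argument is to extract the two scalar conditions that a boundary maximum of $|h|$ imposes at $z_0=re^{i\theta_0}$. Along the circle, $\theta\mapsto |h(re^{i\theta})|^2$ has a maximum at $\theta_0$, so its $\theta$‑derivative vanishes there; differentiating gives $\tfrac{d}{d\theta}|h(re^{i\theta})|^2=-2\,\IM\!\bigl(z h'(z)\overline{h(z)}\bigr)$ with $z=re^{i\theta}$, hence $\IM\!\bigl(z_0 h'(z_0)\overline{h(z_0)}\bigr)=0$, i.e. $z_0 h'(z_0)/h(z_0)$ is real. Along the radius, $\rho\mapsto |h(\rho e^{i\theta_0})|^2$ is maximal at $\rho=r$ among $\rho\in[0,r]$, so its one‑sided derivative at $\rho=r$ is $\ge 0$; since $\tfrac{d}{d\rho}|h(\rho e^{i\theta_0})|^2=\tfrac{2}{\rho}\,\RE\!\bigl(z h'(z)\overline{h(z)}\bigr)$ with $z=\rho e^{i\theta_0}$, this yields $\RE\!\bigl(z_0 h'(z_0)\overline{h(z_0)}\bigr)\ge 0$, and dividing by $|h(z_0)|^2>0$ gives $\RE\!\bigl(z_0 h'(z_0)/h(z_0)\bigr)\ge 0$.

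Combining the two, $z_0 h'(z_0)/h(z_0)$ is a real number which is $\ge 0$, so $z_0 w'(z_0)/w(z_0)=1+z_0 h'(z_0)/h(z_0)$ is real and $\ge 1$, as claimed. There is no genuinely hard step here: the only point demanding a moment's care is the opening reduction — checking that the meromorphy of $w$ poses no obstruction to the maximum‑modulus argument (it does not, since finiteness of the maximum rules out poles inside $\{|z|\le r\}$) and that the $w\equiv 0$ case is harmlessly set aside — after which everything is routine differentiation of $|h|^2$ and separation of the logarithmic derivative into real and imaginary parts. An equivalent packaging, which I might use instead, is to normalise to $v(z):=w(rz)/|w(z_0)|$, which maps the closed unit disc into itself with $v(0)=0$ and $|v|=1$ at the boundary point $z_0/r$, and then run the same radial/tangential computation on $v(z)/z$; the two versions are interchangeable.
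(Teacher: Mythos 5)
Your proof is correct. Note, though, that the paper itself offers no argument for this statement: it is quoted verbatim as Lemma A from Ruscheweyh's book (the classical Jack--Ruscheweyh boundary lemma), so there is no in-paper proof to compare yours against. On its own merits, your write-up is the standard and complete argument: the preliminary reductions are sound (finiteness of the attained maximum excludes poles from $\{|z|\le|z_0|\}$, and the case $w(z_0)=0$ forces $w\equiv 0$, which must be tacitly excluded since the quotient is then undefined); the identity $z w'(z)/w(z)=1+z h'(z)/h(z)$ for $h(z)=w(z)/z$ is right; the maximum modulus principle correctly transfers the extremal property from $w$ to $h$ at the boundary point $z_0$; and the two first-order conditions at the maximum, $\IM\bigl(z_0h'(z_0)\overline{h(z_0)}\bigr)=0$ from the tangential derivative and $\RE\bigl(z_0h'(z_0)\overline{h(z_0)}\bigr)\ge 0$ from the one-sided radial derivative, are computed correctly and combine to give that $z_0w'(z_0)/w(z_0)$ is real and at least $1$, which is exactly what the lemma asserts (including the implicit realness). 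No gaps.
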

\endgroup
Now we begin with the following theorem: 
\begin{theorem}\label{t6}
Let $\eta,\gamma\in\mathbb{R}$ such that $\gamma\neq -\eta,$ 
satisfy any of the following conditions: 
\begin{enumerate}[(i)]
 \item   For $\phi(z)=z+\sqrt{1+z^{2}},$ we have 
 \begin{align}\label{e29}
		 \eta_{1} \leq   \eta     \leq \eta_{0} \text{ and }  
	\eta_{2} \leq   \eta  \leq \eta_{3}, 
	\end{align} 
 where \begin{gather*}
		\eta_{0}=-\frac{\gamma}{\cosh 1}+\frac{\sinh 1}{(2 \cosh 1 (1+\sqrt{2}-\cosh 1))}, \text{ }  \eta_{1}=-\frac{\gamma}{\cosh 1} \nonumber, \\
			\eta_{2}=-\frac{\gamma}{\cos 1} -\frac{\sin 1}{(2 \cos 1 (1+\sqrt{2}-\cos 1))},  \text{  }
			\eta_{3}=-\frac{\gamma}{  \cos 1},
			\end{gather*}
 \item For $\phi(z)=(1+sz)^{2}$ and $0<s\leq 1/\sqrt{2},$ we have  
\begin{align} \label{e43}
		 \eta_{3}+ \eta_{2} \leq \eta \leq \eta_{2} \text{ and }  \left.\begin{array}{lll} 
			\eta \geq \eta_{0}   &  \text{if }  0 < s \leq -1+\sqrt{\cosh 1}, \\ 
			\eta_{0} \leq \eta\leq   \eta_{0}+\eta_{1}  & \text{if }  -1+\sqrt{\cosh 1}< s \leq 1/\sqrt{2},  \end{array} \right\}
	\end{align}
	where 
	\begin{align*}
		& \eta_{0}=-\dfrac{\gamma}{\cosh 1}, \text{ } \eta_{1}= \dfrac{\sinh 1}{2\cosh 1((1+s)^{2}-\cosh 1)},\\&
		\eta_{2}= -\frac{\gamma}{\cos 1}, \text{ } \eta_{3} = - \dfrac{\sin 1}{2\cos 1((1+s)^{2}-\cos 1)}.
		\end{align*}  
 \item For $\phi(z)=e^{z},$ we have  \begin{align}\label{e28}
	\eta_{1} <  \eta   \leq  \eta_{0} \text{ and } \eta_{2} \leq  \eta   < \eta_{3},
	\end{align}  
  where \begin{gather*}
		\eta_{0}=-\frac{\gamma}{\cosh 1}+ \frac{\sinh 1}{2\cosh 1(e - \cosh 1)}, \text{ } 
			\eta_{2}= -\frac{\gamma}{\cos 1}-\frac{\sin 1}{2\cos 1(e -\cos 1)},
		\end{gather*}
$\eta_{1}$ and $\eta_{3}$ are as given in (i). 
\end{enumerate}
If $p(z)$ is an analytic function, such that  $p(0)=1$ and satisfies 
\begin{equation}\label{e59}
p(z)+\frac{zp'(z)}{\eta p(z) + \gamma} \prec \phi(z),
\end{equation}
	then  
	$p(z)\prec \cosh \sqrt{z}.$
\end{theorem}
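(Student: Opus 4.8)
The plan is to use the classical Briot-Bouquet machinery together with Lemma~\ref{l8} to push the subordination \eqref{e59} down to $p(z)\prec\cosh\sqrt{z}$. Writing $q(z)=\cosh\sqrt{z}$, I want to show that $q$ is the (or a) dominant of the Briot-Bouquet equation $p+zp'/(\eta p+\gamma)\prec\phi$. The natural route is to define the auxiliary function
\[
h(z):=q(z)+\frac{zq'(z)}{\eta q(z)+\gamma},
\]
and to prove, under the stated bounds on $\eta$, that $h(z)\prec\phi(z)$ for each of the three choices $\phi=z+\sqrt{1+z^2}$, $\phi=(1+sz)^2$, $\phi=e^z$. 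Once this is in hand, the standard Briot-Bouquet theory (Miller--Mocanu, as cited) gives that any analytic $p$ with $p(0)=1$ satisfying $p+zp'/(\eta p+\gamma)\prec h$ must satisfy $p\prec q$; since subordination is transitive and $h\prec\phi$, the hypothesis $p+zp'/(\eta p+\gamma)\prec\phi$ is weaker than needed, so I will instead run the argument directly: assume $p\not\prec q$ and derive a contradiction at a point where $|w|$ is maximal, where $w$ is the meromorphic function with $p(z)=q(w(z))$.

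More concretely, the core estimate reduces to a boundary computation. Since $q(z)=\cosh\sqrt z$ maps $\mathbb D$ onto $\Omega_\varrho$ and, crucially, $q$ maps the real segment $(-1,1)$ onto the real segment $(\cos 1,\cosh 1)$ (this is why the constants $\cosh 1$, $\cos 1$, $\sinh 1$, $\sin 1$ appear: $q(1)=\cosh 1$, $q(-1)=\cos 1$, and $zq'(z)=\tfrac12\sqrt z\,\sinh\sqrt z$ evaluated at $z=\pm1$ gives $\pm\tfrac12\sinh 1$, $\mp\tfrac12\sin 1$), the quantity $h(z)=q(z)+zq'(z)/(\eta q(z)+\gamma)$ attains its extreme real values precisely at $z=\pm1$. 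The plan is to show $\partial h(\mathbb D)$ stays inside the relevant target region $\Omega_e$, $\Omega_s$, or $\Omega_{\rightmoon}$; for the last one I will use the remark that it suffices to stay inside each of the two discs $\Delta_1,\Delta_2$ separately, i.e. check $|h-1|<\sqrt2$ and $|h+1|>\sqrt2$, and the hypothesis \eqref{e29} is exactly engineered so that both the $\eta_1\le\eta\le\eta_0$ chain (controlling the $z=1$ end, landing in $\Delta_1$) and the $\eta_2\le\eta\le\eta_3$ chain (controlling the $z=-1$ end, keeping out of $\Delta_2$) hold. For $\phi=e^z$ one checks $|\log h(z)|<1$, i.e. that $h$ stays between the two real values $e^{-1}$ and $e$ in the appropriate sense, which again localizes to the endpoints $z=\pm 1$ and produces $\eta_0,\eta_1,\eta_2,\eta_3$ as written. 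For $\phi=(1+sz)^2$ I will use the inclusion $\Omega_s\subset\{w:|w-1|<|s|(|s|+2)\}$ from \eqref{e66} together with, on the other side, a disc around $1$ that $\Omega_s$ contains, so that the admissible $\eta$-band is sandwiched; the split at $s=-1+\sqrt{\cosh 1}$ arises from comparing $(1+s)^2$ with $\cosh 1$, i.e. whether the point $q(1)=\cosh 1$ already lies inside $(1+sz)^2(\mathbb D)$.

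The argument by contradiction proceeds as follows. Suppose $p\not\prec q$; since $p(0)=1=q(0)$ and $q$ is univalent on $\mathbb D$, there is a smallest $r\in(0,1)$ and a point $z_0$ with $|z_0|=r$ such that $p(z_0)=q(\zeta_0)$ with $|\zeta_0|=1$, while $p(|z|<r)\subset q(\mathbb D)$. Define $w=q^{-1}\circ p$ near the origin; $w$ is meromorphic-type in the relevant sense, $w(0)=0$, and $|w(z_0)|=\max_{|z|\le|z_0|}|w(z)|=1$, so Lemma~\ref{l8} gives $m:=z_0w'(z_0)/w(z_0)\ge 1$. Differentiating $p=q\circ w$ yields $z_0p'(z_0)=w(z_0)q'(w(z_0))\,m$, hence
\[
p(z_0)+\frac{z_0p'(z_0)}{\eta p(z_0)+\gamma}=q(\zeta_0)+\frac{m\,\zeta_0 q'(\zeta_0)}{\eta q(\zeta_0)+\gamma},\qquad |\zeta_0|=1.
\]
I then show this value lies \emph{outside} $\phi(\mathbb D)$, contradicting \eqref{e59}. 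The worst case is $\zeta_0=\pm 1$ (where $q(\zeta_0)$ and $\zeta_0 q'(\zeta_0)$ are real and the expression moves farthest along the real axis as $m\ge1$ increases), and the stated inequalities on $\eta,\gamma$ are precisely the conditions forcing the point $\cosh 1+\tfrac{m}{2}\sinh 1/(\eta\cosh 1+\gamma)$ (resp. $\cos 1-\tfrac{m}{2}\sin 1/(\eta\cos 1+\gamma)$) to fall outside the respective region for all $m\ge1$; the sign conditions $\eta\ge\eta_1=-\gamma/\cosh 1$ and $\eta\le\eta_3=-\gamma/\cos 1$ guarantee the denominators have the correct sign so the perturbation pushes outward rather than inward. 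Rotating $\zeta_0$ away from $\pm1$ only shrinks $|\zeta_0 q'(\zeta_0)|$ and moves $q(\zeta_0)$ into the interior of $\Omega_\varrho$, so the endpoint cases indeed dominate. The main obstacle is this last monotonicity/extremality claim — rigorously verifying that, over all $|\zeta_0|=1$ and all $m\ge1$, the expression $q(\zeta_0)+m\zeta_0q'(\zeta_0)/(\eta q(\zeta_0)+\gamma)$ cannot re-enter $\phi(\mathbb D)$ — which requires a careful parametrization of $\partial\Omega_\varrho$ via $\zeta_0=e^{i\theta}$ and a case analysis of the boundary of each target region, and this is where the bulk of the computational work lies.
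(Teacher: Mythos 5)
Your plan is, in structure, exactly the paper's proof: set $w=q^{-1}\circ p$ (the paper writes $w=(\cosh^{-1}p)^{2}$, so $p=\cosh\sqrt{w}$), apply the Ruscheweyh/Jack lemma (Lemma A) at a point where $|w|$ first reaches $1$ to get $z_{0}w'(z_{0})=kw(z_{0})$, $k\ge1$, and then show that the boundary value $q(\zeta_{0})+k\,\zeta_{0}q'(\zeta_{0})/(\eta q(\zeta_{0})+\gamma)$, $\zeta_{0}=e^{2it}$, lies outside a region containing $\phi(\mathbb{D})$: outside $\{\omega:|\omega-1|<\sqrt{2}\}\supset\Omega_{\rightmoon}$ in (i), outside $\{\omega:|\omega-1|<s(s+2)\}\supset\Omega_{s}$ in (ii), and outside $\Omega_{e}=\{\omega:|\log\omega|<1\}$ in (iii). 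Your endpoint computations at $\zeta_{0}=\pm1$ (i.e. $t=0,\pi/2$), including the sign conditions $\eta\ge-\gamma/\cosh 1$ and $\eta\le-\gamma/\cos 1$ and the split at $s=-1+\sqrt{\cosh 1}$ coming from comparing $(1+s)^{2}$ with $\cosh 1$, reproduce precisely the paper's constants $\eta_{0},\dots,\eta_{3}$.

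The genuine gap is the step you yourself defer: the claim that over all $|\zeta_{0}|=1$ and all $k\ge1$ the worst case is $\zeta_{0}=\pm1$. This is the entire content of the paper's $F(t)=N(t)-cD(t)$ (resp. $U(t)+iV(t)$, $\log$-based $F$) analysis, and the heuristic you offer for it is not sound as stated: for $\zeta_{0}\ne\pm1$ the point $q(\zeta_{0})$ stays on $\partial\Omega_{\varrho}$ (it does not move into the interior of $\Omega_{\varrho}$), and the perturbation $k\zeta_{0}q'(\zeta_{0})/(\eta q(\zeta_{0})+\gamma)$ is genuinely complex — both its modulus and its direction vary with $\zeta_{0}$, since the denominator $\eta q(\zeta_{0})+\gamma$ rotates as well — so neither $|\mathfrak{B}(z_{0})-1|$ nor $|\log\mathfrak{B}(z_{0})|$ is monotone along the circle in any obvious way. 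The paper handles this by the explicit expansion of $|\mathfrak{B}(z_{0})-1|^{2}=N(t)/D(t)$, the (asserted, computational) fact that $F$ attains its minimum on $[0,\pi/2]$ only at the endpoints, and then a careful endpoint verification: monotonicity of $F(0)$, $F_{s}(0)$, $F_{s}(\pi/2)$ in $k$ under the stated sign conditions, the factorization $X_{s}=(X_{s_{1}}-X_{s_{2}})(X_{s_{1}}+X_{s_{2}})$ giving the two $s$-ranges in (ii), and in (iii) the checks $\xi'(k)>0$ for $\eta>\eta_{1}$, $\xi(1)\ge e$ and $\zeta(1)\ge e$ for $\eta\le\eta_{0}$, $\eta_{2}\le\eta<\eta_{3}$. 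Without the interior-extremality verification your argument is only an endpoint check and the contradiction for a general $z_{0}$ is not established. A minor point: your opening route (prove $h=q+zq'/(\eta q+\gamma)\prec\phi$ and invoke Briot-Bouquet theory) runs the subordination in the wrong direction, as you noticed; it should simply be dropped rather than sketched.
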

\begin{proof}
	Let $\mathfrak{B}(z)$ and $w(z)$ be as given below:
\begin{equation}\label{e64}
	\mathfrak{B}(z):=p(z)+\frac{z p'(z)}{\eta p(z) + \gamma} \text{ and } w(z)=(\cosh^{-1}{p(z)})^{2}, 
\end{equation}then we have $p(z)=\cosh\sqrt{w(z)}.$  It is evident that $w(z)$ is a well-defined analytic function, with $w(0)=0.$ 
Now to prove $p(z)\prec \cosh \sqrt{z},$ we need to show that $|w(z)|<1$ in $\mathbb{D}.$  For if,  there exists $z_{0}\in\mathbb{D}$ such that $\max _{|z|\leq |z_{0}|} |w(z)|=|w(z_{0})|=1,$  then by Lemma \ref{l8}, we have $z_{0}w'(z_{0})=k w(z_{0}),$ where $k\geq 1.$ Let $w(z_{0})=e^{2it },$ where $-\pi/2 \leq t\leq \pi/2.$ \\ \ \\
(i)  Let $\phi(z)=z+\sqrt{1+z^{2}},$ then from \eqref{crescentregion}, we have $\phi(\mathbb{D})\subset\{w:|w-1|<\sqrt{2}\}.$ Now we deduce a contradiction by arriving at  $|\mathfrak{B}(z_{0})-1|^{2}\geq 2.$ To do this, we expand the following expression using \eqref{e64}: 
\begin{align}\label{e40}
			|\mathfrak{B}(z_{0})-1|^{2}& = \left|\cosh  \sqrt{w(z_{0})} -1 + \frac{z_{0}w'(z_{0})\sinh \sqrt{w(z_{0})}}{2\sqrt{w(z_{0})}(\eta \cosh \sqrt{w(z_{0})}+\gamma)}\right|^{2}\nonumber \\
			& =\left|\cosh e^{it} - 1 + \frac{k e^{it} \sinh e^{it}}{2 (\eta \cosh e^{it} + \gamma)}\right|^{2}
			\\& =:\frac{N(t)}{D(t)},  
\end{align}
		where  
		\begin{align}\label{e34}
			N(t)&=(\sinh (\cos t)(2 \gamma  k \sin t \cos (\sin t)+2(2 \gamma ^2- \eta ^2) \sin (\sin t)+\eta ^2 \sin (3 \sin t))\nonumber\\&\quad+2 \gamma  \sin (\sin t) \cosh (\cos t) (k \cos t+4 \eta  \cos (\sin t) \sinh (\cos t))+\eta  k (\sin (2 \sin t) \cos t\nonumber\\&\quad+\sin t \sinh (2 \cos t))+\eta ^2 \sin (\sin t) \sinh ^3(\cos t)+3 \eta ^2 \sin (\sin t) \sinh (\cos t) \cosh ^2(\cos t))^2 \nonumber\\&\quad+ 4(\cosh (\cos t) (\eta  k \cos t \sinh (\cos t)-\gamma  k \sin t \sin (\sin t)+2 \gamma  (\gamma -2 \eta ) \cos (\sin t)\nonumber\\&\quad+2 \eta ^2 \sin ^2(\sin t) \cos (\sin t) \sinh ^2(\cos t))-\eta  \cos (\sin t) \cosh ^2(\cos t) (k \sin t \sin (\sin t)\nonumber\\&\quad+2(\eta -2 \gamma ) \cos (\sin t))+\gamma  k \cos t \cos (\sin t) \sinh (\cos t)+2 \eta ^2 \cos ^3(\sin t) \cosh ^3(\cos t)\nonumber\\&\quad +\eta  \sin (\sin t) \sinh ^2(\cos t) (k \sin t \cos (\sin t)-2 \eta  \sin (\sin t))-2 \gamma ^2)^2
		\end{align}	
		and 
		\begin{align}\label{e35}
			D(t)&=	16((\gamma+\eta \cos (\sin t) \cosh (\cos t))^{2}+\eta ^2 \sin ^2(\sin t) \sinh ^2(\cos t))^2.
		\end{align}
		
\noindent Define a function $F(t)$ on the interval $[-\pi/2,\pi/2],$ as 
\begin{equation*}\label{e31}
  F(t)=N(t)-2 D(t).
  \end{equation*} 
  Since $F(t)$ is an even function, it is sufficient to show that $F(t)$ is non-negative in $[0,\pi/2]$ or minimum of $F(t)$ is non-negative in $[0,\pi/2].$ A computation reveals that the minimum of $F(t)$ is obtained either at $t=0$ or $t=\pi/2.$  Now we see that 


\begin{align*} 
			F(0)&= (k(\eta  \sinh 2 + 2 \gamma   \sinh 1)  - 4(1-\cosh 1)\left(\gamma + \eta \cosh 1\right)^2)^2-32 \left(\gamma + \eta \cosh 1\right)^4	
			\intertext{and}
			F\left(\frac{\pi}{2}\right)&= (k(\eta \sin 2 + 2 \gamma \sin 1) - 4(\cos 1 - 1 )(\gamma +\eta \cos 1)^{2})^{2} - 32(\gamma +\eta \cos 1)^{4}. 
		\end{align*}
Since $k\geq 1,$ we have $F(0)\geq
( 4 (\cosh 1 - 1) ( \gamma + \eta  \cosh 1 )^2 + (\eta  \sinh 2+2 \gamma  \sinh 1)^2)^2-32 (\gamma+\eta  \cosh 1)^4=:X(\eta).$ But $X(\eta) \geq 0,$ whenever
$\eta_{1} \leq \eta \leq \eta_{0},$  which implies  $F(0)\geq 0.$ 
Infact, for $k\geq 1,$  we have $F(\pi/2) \geq ((\eta \sin 2 + 2 \gamma \sin 1) - 4(\cos 1 - 1 )(\gamma +\eta \cos 1)^{2})^{2} - 32(\gamma +\eta \cos 1)^{4}=:Y(\eta).$ Since,  
$(\eta \sin 2 + 2 \gamma \sin 1) - 4(\cos 1 - 1 )(\gamma +\eta \cos 1)^{2} \geq 4 \sqrt{2} \left(\gamma + \eta \cos 1\right)^{2},$ whenever $ \eta_{2} \leq \eta \leq \eta_{3},$  therefore, $Y(\eta)\geq 0,$ which implies $F(\pi/2)\geq 0.$ Thus  $|\mathfrak{B}(z_{0})-1|^{2}\geq 2,$ which contradicts \eqref{e59}, hence the result follows at once.
\\ \ \\
(ii) Let $\phi(z)=(1+sz)^{2},$ then  from \eqref{e66}, we have $\phi(\mathbb{D})\subset \{w:|w-1|<s(s+2)\}.$ Now we shall show that $|\mathfrak{B}(z_{0})-1
|\geq s(s+2),$ which leads to the desired contradiction. To achieve this, we use the expansion of  $|\mathfrak{B}(z_{0})-1|^{2},$ as given in \eqref{e40}, with $N(t)$ and $D(t)$ given by \eqref{e34} and \eqref{e35}, respectively. 
Now for each $0<s\leq 1/\sqrt{2},$ define
\begin{equation*}\label{e36}
F_{s}(t)=N(t)-s^{2}(s+2)^{2} D(t), \text{ where } - \pi/2\leq t \leq \pi/2.
\end{equation*}
We observe that $F(t)$ is an even function, consequently, it is suffices to prove that $F(t)\geq 0$ for  $t\in[0,\pi/2].$  
Furthermore, it is  observed that $F(t)$ attains its minimum at either $t=0$ or $\pi/2.$ 
Now for $k\geq 1,$ we have
	\begin{align*} 
		F_{s}(0)&=(4 (\cosh 1 -1) (\gamma+\eta  \cosh 1)^2+ k (\eta  \sinh 2 + 2 \gamma  \sinh 1 ))^2 \nonumber \\& \quad -16 s^{2}(s+2)^{2} (\gamma + \eta  \cosh 1  )^4 
		\intertext{and}
		F_{s}\left(\frac{\pi}{2}\right) & = (4 (\cos 1-1) (\gamma+\eta  \cos 1)^2  -  k(\eta  \sin 2 + 2 \gamma  \sin 1 ))^2 \nonumber \\&  \quad -16 s^{2}(s+2)^{2} ( \gamma + \eta  \cos 1)^4.  \nonumber
	\end{align*}



\noindent For each $k\geq 1,$ it can be easily verified that $F_{s}(0)$ is a monotonically increasing function of $k,$ provided $\eta \geq \eta_{0},$ this implies $F_{s}(0)\geq (4 (\cosh 1 -1) (\gamma+\eta  \cosh 1)^2+ (\eta  \sinh 2 + 2 \gamma  \sinh 1 ))^2 -16 s^{2}(s+2)^{2} (\gamma + \eta  \cosh 1  )^4=:X_{s}(\eta).$  Now $X_{s}(\eta)$ can be written as $X_{s}(\eta)=(X_{{s}_{1}}(\eta)-X_{{s}_2}(\eta))(X_{{s}_{1}}(\eta)+X_{{s}_2}(\eta)),$ where $X_{{s}_1}(\eta):=4 (\cosh 1 -1) (\gamma+\eta  \cosh 1)^2+ (\eta  \sinh 2 + 2 \gamma  \sinh 1 ) -4 s(s+2)(\gamma + \eta  \cosh 1  )^{2}$ and $X_{{s}_2}(\eta):=4 (\cosh 1 -1) (\gamma+\eta  \cosh 1)^2+ (\eta  \sinh 2 + 2 \gamma  \sinh 1 ) +4 s(s+2)(\gamma + \eta  \cosh 1  )^{2}.$ Now, for each $0<s\leq 1/\sqrt{2},$ we need to show that $X_{s}(\eta)\geq 0.$ Observe that, for each $0<s\leq \sqrt{\cosh 1}-1,$ $X_{{s}}(\eta)\geq 0$ if and only if $X_{{s}_1}(\eta)\geq X_{{s}_2}(\eta),$ i.e $\eta  \sinh 2 + 2 \gamma  \sinh 1 + 4 (\cosh 1 - (s+1)^2) (\gamma + \eta  \cosh 1 )^2\geq 0,$ which happens,  whenever $\eta \geq \eta_{0},$ thus   $X_{s}(\eta)\geq 0.$   Infact,  for each $\sqrt{\cosh 1}-1<s\leq 1/\sqrt{2},$ $X_{s}(\eta)\geq 0$ if and only if  the inequality $4 ((s+1)^2 - \cosh 1) (\eta  \cosh 1 + \gamma )^2 \leq  \eta  \sinh 2 + 2 \gamma  \sinh 1$ holds, which is possible whenever $\eta_{0}\leq \eta \leq \eta_{0}+\eta_{1},$ which means $X_{s}(\eta)\geq 0.$  Eventually, for each $k\geq 1$ and $0<s\leq 1/\sqrt{2},$ we have $F_{s}(0)\geq 0.$ Next, for  $\eta\leq \eta_{2},$ it can be seen that  $F_{s}(\pi/2)$ is a monotonically increasing function of $k$ and $F_{s}(\pi/2) \geq (4 (\cos 1-1) (\gamma+\eta  \cos 1)^2  -  (\eta  \sin 2 + 2 \gamma  \sin 1 ))^2 -16 s^{2}(s+2)^{2} ( \gamma + \eta  \cos 1)^4=:Y_{s}(\eta).$ In addition, for each $0<s\leq 1/\sqrt{2},$ we have $Y_{s}(\eta)\geq 0,$ whenever $\eta_{2}+\eta_{3} \leq \eta \leq \eta_{2}.$ Therefore, for each $k\geq 1$ and $0<s\leq 1/\sqrt{2},$  we deduce that $F_{s}(\pi/2)\geq 0.$ Thus $
|\mathfrak{B}(z_{0})-1|^{2}\geq s^{2}(s+2)^{2}.$ Hence, we get a contradiction to the hypothesis, given in \eqref{e59}, which completes the proof. \\ \ \\
(iii) Choose $\phi(z)=e^{z}.$ We prove this result by the method of contradiction, similar to (i) and (ii). To proceed, it is suffices to show that  

	\begin{equation}\label{e23}
		|\log \mathfrak{B}(z_{0})|^{2} \geq 1,
	\end{equation} 
where $\log$ denotes the principle branch of logarithmic function.
Consider, 
\begin{align}\label{e61}
 \mathfrak{B}(z_{0})=\cosh e^{it} + \frac{k e^{it} \sinh e^{it}}{2 (\eta \cosh e^{it} + \gamma)}
 =:U(t)+i V(t),
 \end{align} 
where
 
	\begin{align*}
		U(t)&:=\kappa_{t}^{-1}(\cosh (\cos t) (\gamma  (2 \gamma  \cos (\sin t)-k \sin t \sin (\sin t))+\eta  k \cos t \sinh (\cos t)\\&\quad+2 \eta ^2 \sin ^2(\sin t) \cos (\sin t) \sinh ^2(\cos t))+k \cos (\sin t) \sinh (\cos t) (\gamma  \cos t\\&\quad+\eta  \sin t \sin (\sin t) \sinh (\cos t))+\eta  \cos (\sin t) \cosh ^2(\cos t) (4 \gamma  \cos (\sin t)\\&\quad+2 \eta ^2 \cos ^3(\sin t) \cosh ^3(\cos t)-k \sin t \sin (\sin t)))
		\intertext{and}
		V(t)&:= \kappa_{t}^{-1} (\sinh (\cos t) (2 \gamma  k \sin t \cos (\sin t)+2(2 \gamma ^2- \eta ^2) \sin (\sin t)+\eta ^2 \sin (3 \sin t))\\&\quad+\eta  k (\sin (2 \sin t) \cos t+\sin t \sinh (2 \cos t))+\eta ^2 \sin (\sin t) \sinh ^3(\cos t)\\&\quad+3 \eta ^2 \sin (\sin t) \sinh (\cos t) \cosh ^2(\cos t))+2 \gamma  \sin (\sin t) \cosh (\cos t) (k \cos t\\&\quad+4 \eta  \cos (\sin t) \sinh (\cos t)),
	\end{align*}
	with $\kappa_{t}:=2 ((\gamma +\eta  \cos (\sin t) \cosh (\cos t))^2+\eta ^2 \sin ^2(\sin t) \sinh ^2(\cos t)).$ \\ Assume 
 $F(t)=4|\log  \mathfrak{B}(z_{0})|^{2} - 4.$ Now, to prove \eqref{e23}, in view of \eqref{e61}, it is enough to show that 
\begin{align*}\label{e32}
  F(t)=\log^{2}(U^{2}(t)+V^{2}(t))+4 \left(\tan^{-1}\frac{V(t)}{U(t)}\right)^{2}-4\geq 0.
\end{align*}


\noindent		Since $F(-t)=F(t),$ for each $t\in[-\pi/2,\pi/2],$ so we confine our findings to the interval $[0,\pi/2].$ It can be easily verified that $F(t)$ attains its minimum at $t=0$ or $\pi/2.$
\noindent Now for $k\geq 1,$ we have
	\begin{align*}
		F(0)&= \left(\log \left(\cosh 1 + \frac{  k (\eta  \sinh 2 + 2 \gamma  \sinh 1)}{4 (\gamma + \eta  \cosh 1)^2}\right)^2\right)^2-4 
		\intertext{and}
		F\left(\frac{\pi}{2}\right) &= \left(\log \left(\cos 1 - \frac{ k(\eta  \sin 2 + 2 \gamma  \sin 1)}{4 ( \gamma +\eta \cos 1  )^{2}}\right)^2\right)^2-4.
	\end{align*}
As $\log x$ is a monotonically increasing function, it is suffices to determine the minimum of $\xi(k):=\cosh 1 +k (\eta  \sinh 2+2 \gamma  \sinh 1 )/4 (\gamma + \eta  \cosh 1)^2.$ Observe that 
 $\xi'(k)=(2 \gamma \sinh 1 +\eta \sinh 2)/4(\gamma +\eta \cosh 1)^{2}>0,$ whenever $\eta> \eta_{1},$ i.e $\xi(k)$ is an increasing function of $k,$ whenever $\eta> \eta_{1}.$ Further, as a consequence of the inequality: $\eta_{1} < \eta \leq\eta_{0}$, we have
 $\xi(k)\geq \cosh 1 +(2 \gamma \sinh 1+ \eta \sinh 2)/4(\gamma +\eta \cosh 1)^{2}=\xi(1)\geq e,$ which gives $(\log(\xi^{2}(k)))^{2} \geq (\log e^{2})^{2}=4,$ thus $F(0)\geq 0.$ Moreover, for each $k\geq 1,$ we have $\zeta(k):=\cos 1 -   k(\eta  \sin 2 + 2 \gamma  \sin 1)/(4 (\gamma + \eta  \cos 1 )^2)\geq \cos 1 -   (\eta  \sin 2 + 2 \gamma  \sin 1)/(4 (\gamma + \eta  \cos 1 )^2)=\zeta(1)\geq e,$ whenever $\eta_{2}\leq \eta < \eta_{3},$ which implies $(\log(\zeta^{2}(k)))^{2} \geq (\log e^{2})^{2}=4,$  thus $F(\pi/2)\geq 0.$ Hence $|\log \mathfrak{B}(z_{0})|^{2}\geq 1,$ which contradicts the hypothesis given in \eqref{e59}, this completes the proof. \end{proof}

\noindent  Below, we derive some special cases of Theorem \eqref{t6} by appropriately choosing the value of $\eta$ so that the conditions of the hypothesis are not violated. Next result is obtained by substituting $p(z)=zf'(z)/f(z),$ and take $\eta=1/2,$ in Theorem \ref{t6}(i) and (iii), 
\begin{corollary}
Let $\gamma\in\mathbb{R}\setminus\{-1/2\},$ and if $f\in\mathcal{A}$ satisfies the following 
\[\frac{zf'(z)}{f(z)}\left(1+\dfrac{1+2\left(\dfrac{zf''(z)}{f'(z)}-\dfrac{zf'(z)}{f(z)}\right)}{\dfrac{zf'(z)}{f(z)}+2\gamma}\right)\prec \phi(z),\] 
\begin{enumerate}[(i)]
\item for $\phi(z)=z+\sqrt{1+z^{2}},$ with 
\[-\frac{\cos 1}{2} \left(1+\frac{\tan 1}{\sqrt{2}+1-\cos 1}\right)\leq \gamma \leq -\frac{\cos 1}{2},\]
\item for $\phi(z)=e^{z},$ with \[-\frac{\cos 1}{2}  \left(1+\frac{\tan 1}{e-\cos 1}\right) \leq \gamma \leq -\frac{\cosh 1}{2} \left(1-\frac{\tanh 1}{e-\cosh 1}\right),\]
\end{enumerate}
 then $f\in\mathcal{S}^{*}_{\varrho}.$
\end{corollary}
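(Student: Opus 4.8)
The plan is to recognize the displayed hypothesis as a Briot--Bouquet subordination of the type treated in Theorem~\ref{t6} and then read off the admissible parameters. First I would put
\[
p(z):=\frac{zf'(z)}{f(z)},
\]
which is analytic on $\mathbb{D}$ with $p(0)=1$; analyticity is not an extra assumption, since the left-hand side of the displayed subordination must itself be analytic on $\mathbb{D}$, which forces $f(z)/z\neq 0$ there. Logarithmic differentiation gives the standard identity
\[
zp'(z)=p(z)\left(1+\frac{zf''(z)}{f'(z)}-\frac{zf'(z)}{f(z)}\right),
\]
and substituting this into $p(z)+zp'(z)/\bigl(\tfrac12 p(z)+\gamma\bigr)$ and clearing the denominator $\tfrac12 p(z)+\gamma=\tfrac12\bigl(zf'(z)/f(z)+2\gamma\bigr)$ shows that the left-hand side of the subordination in the statement is the instance of \eqref{e59} obtained with $\eta=\tfrac12$ and the same $\gamma$. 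In particular the restriction $\gamma\neq-\tfrac12$ is precisely the non-degeneracy condition $\gamma\neq-\eta$ of Theorem~\ref{t6}.

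Next I would check that $\eta=\tfrac12$, together with the stated ranges of $\gamma$, meets the parameter hypotheses of Theorem~\ref{t6}(i) (for $\phi(z)=z+\sqrt{1+z^{2}}$) and Theorem~\ref{t6}(iii) (for $\phi(z)=e^{z}$). For part~(i) I would substitute $\eta=\tfrac12$ into the two double inequalities $\eta_{1}\le\eta\le\eta_{0}$ and $\eta_{2}\le\eta\le\eta_{3}$ and solve each of the four resulting scalar inequalities for $\gamma$; one finds that the two coming from $\eta_{2},\eta_{3}$ (the ones built from $\cos 1$) are the more restrictive, and their intersection is exactly $-\tfrac{\cos 1}{2}\bigl(1+\tfrac{\tan 1}{\sqrt2+1-\cos 1}\bigr)\le\gamma\le-\tfrac{\cos 1}{2}$. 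For part~(ii) the analogous substitution in $\eta_{1}<\eta\le\eta_{0}$ and $\eta_{2}\le\eta<\eta_{3}$ gives four inequalities in $\gamma$; this time the lower endpoint is produced by $\eta_{2}\le\tfrac12$ (a $\cos 1$-expression) and the upper endpoint by $\tfrac12\le\eta_{0}$ (a $\cosh 1$-expression), yielding precisely the interval displayed in~(ii). With these inequalities verified, Theorem~\ref{t6} applies and gives $p(z)\prec\cosh\sqrt z=\varrho(z)$, i.e. $zf'(z)/f(z)\prec\varrho(z)$, which by definition means $f\in\mathcal{S}^{*}_{\varrho}$.

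The main obstacle, such as it is, will not be conceptual: this is a specialization of Theorem~\ref{t6}, so once the reduction above is in place the proof is immediate. The two steps that require some care are (a) the algebraic verification that the displayed differential expression is genuinely the $\eta=\tfrac12$ instance of \eqref{e59} — here one must keep track of the factor $2$ that arises from inverting $\tfrac12 p(z)+\gamma$ — and (b) the elementary but slightly fiddly bookkeeping in the second step, where one imposes two two-sided bounds on $\eta$ simultaneously, solves for $\gamma$, and must correctly identify which of the four endpoints is active in each of the two cases. I would also state explicitly at the outset that the substitution $p=zf'/f$ is legitimate because the hypothesis forces $f(z)/z$ to be non-vanishing on $\mathbb{D}$.
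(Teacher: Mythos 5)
Your proposal is correct and is exactly the paper's route: the paper obtains this corollary precisely by substituting $p(z)=zf'(z)/f(z)$ and $\eta=1/2$ into Theorem~\ref{t6}(i) and (iii), and your identification of which of the four endpoint inequalities are binding (both $\cos 1$-bounds in part (i); the $\eta_{2}$-bound below and the $\eta_{0}$-bound above in part (ii)) reproduces the stated $\gamma$-intervals. Your careful tracking of the factor $2$ from $\tfrac12 p+\gamma$ is in fact sharper than the printed statement, whose numerator should read $2\bigl(1+\tfrac{zf''}{f'}-\tfrac{zf'}{f}\bigr)$ rather than $1+2\bigl(\tfrac{zf''}{f'}-\tfrac{zf'}{f}\bigr)$ — a typo in the paper, not a gap in your argument.
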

Taking $p(z)=zf'(z)/f(z),$ $s=0.2$ and $\eta=1$ in Theorem \ref{t6}(ii), we obtain the following corollary:
\begin{corollary}
Let $\gamma\in\mathbb{R}\setminus\{-1\}$ such that $-(\sin 1)/(2((1.2)^{2}-\cos 1))\leq \gamma \leq -\cos 1.$ If $f\in\mathcal{A}$ satisfies the following 
\[\frac{zf'(z)}{f(z)}\left(1+\dfrac{1+\dfrac{zf''(z)}{f'(z)}-\dfrac{zf'(z)}{f(z)}}{\dfrac{zf'(z)}{f(z)}+\gamma}\right)\prec (1+(0.2)z)^{2},\]
then $f\in\mathcal{S}^{*}_{\varrho}.$
\end{corollary}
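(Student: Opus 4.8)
The plan is to derive this corollary as a direct specialization of Theorem~\ref{t6}(ii). First I would set $p(z)=zf'(z)/f(z)$, which is analytic on $\mathbb{D}$ with $p(0)=1$ since $f\in\mathcal{A}$. A routine logarithmic differentiation gives $zp'(z)=p(z)\bigl(1+zf''(z)/f'(z)-zf'(z)/f(z)\bigr)$, so that with $\eta=1$ the Briot--Bouquet expression becomes
\[
p(z)+\frac{zp'(z)}{p(z)+\gamma}
=\frac{zf'(z)}{f(z)}\left(1+\frac{1+\dfrac{zf''(z)}{f'(z)}-\dfrac{zf'(z)}{f(z)}}{\dfrac{zf'(z)}{f(z)}+\gamma}\right),
\]
which is exactly the quantity subordinated to $(1+(0.2)z)^2$ in the hypothesis. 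Thus the assumption of the corollary is precisely \eqref{e59} of Theorem~\ref{t6} with $\phi(z)=(1+sz)^2$, $s=0.2$, $\eta=1$.

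Next I would check that the choice $s=0.2$, $\eta=1$ is admissible under condition \eqref{e43}. Since $0<0.2\le 1/\sqrt{2}$, the function $\phi$ is of the allowed form. One computes $-1+\sqrt{\cosh 1}\approx -1+\sqrt{1.543}\approx 0.242$, so $s=0.2$ falls in the first regime $0<s\le -1+\sqrt{\cosh 1}$; hence the only constraint on $\eta$ coming from the ``cosh'' side is $\eta\ge\eta_0=-\gamma/\cosh 1$, i.e. $\gamma\ge -\eta\cosh 1=-\cosh 1$. From the ``cos'' side the requirement is $\eta_3+\eta_2\le\eta\le\eta_2$, i.e.
\[
-\frac{\gamma}{\cos 1}-\frac{\sin 1}{2\cos 1\,((1.2)^2-\cos 1)}\le 1\le -\frac{\gamma}{\cos 1},
\]
which rearranges to $-\dfrac{\sin 1}{2((1.2)^2-\cos 1)}\le\gamma\le -\cos 1$. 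Combining with $\gamma\ge-\cosh 1$ and noting $-\cosh 1<-\dfrac{\sin 1}{2((1.2)^2-\cos 1)}$ numerically (the latter is about $-0.60$, the former about $-1.54$), the effective hypothesis on $\gamma$ is exactly $-\dfrac{\sin 1}{2((1.2)^2-\cos 1)}\le\gamma\le-\cos 1$, together with $\gamma\neq-1$ so that $\eta p+\gamma$ does not vanish at $z=0$. This matches the statement verbatim.

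Having verified that the hypotheses of Theorem~\ref{t6}(ii) are met, I would invoke that theorem to conclude $p(z)\prec\cosh\sqrt{z}$, that is, $zf'(z)/f(z)\prec\varrho(z)$, which by definition means $f\in\mathcal{S}^{*}_{\varrho}$. The only genuine point requiring care---and the step I expect to be the main (minor) obstacle---is the numerical bookkeeping that decides which branch of \eqref{e43} applies and then simplifies the resulting pair of $\eta$-inequalities into a clean interval for $\gamma$; once the inequality $-\cosh 1 \le -\sin 1/(2((1.2)^2-\cos 1))$ is confirmed, the redundant lower bound drops out and nothing else is needed. No new estimates or lemmas beyond Theorem~\ref{t6} are required.
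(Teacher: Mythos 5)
Your overall route is exactly the paper's: the corollary is obtained by setting $p(z)=zf'(z)/f(z)$, $\eta=1$, $s=0.2$ in Theorem \ref{t6}(ii), and your identification of the Briot--Bouquet expression, together with the observation that $0.2\le -1+\sqrt{\cosh 1}\approx 0.242$ places you in the first branch of \eqref{e43} (so only $\eta\ge\eta_0$, i.e.\ $\gamma\ge-\cosh 1$, is needed on the cosh side), is correct.

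The flaw is in the step you yourself single out as the main bookkeeping: the rearrangement of the cos-side constraint. From $\eta_3+\eta_2\le 1\le\eta_2$ with $\eta=1$, multiplying by $\cos 1>0$ gives $\gamma\le-\cos 1$ together with $\gamma\ge-\cos 1-\dfrac{\sin 1}{2\bigl((1.2)^2-\cos 1\bigr)}\approx-1.008$; you dropped the $-\cos 1$ term and asserted that the left inequality becomes $\gamma\ge-\dfrac{\sin 1}{2\bigl((1.2)^2-\cos 1\bigr)}\approx-0.468$. So the actual parameter range delivered by Theorem \ref{t6}(ii) is $-\cos 1-\sin 1/\bigl(2((1.2)^2-\cos 1)\bigr)\le\gamma\le-\cos 1$ with $\gamma\neq-1$, and your claim that your rearrangement ``matches the statement verbatim'' only reproduces what appears to be the same slip in the printed corollary: as stated, the interval $\bigl[-\sin 1/\bigl(2((1.2)^2-\cos 1)\bigr),-\cos 1\bigr]$ is empty, since $-0.468>-0.540$. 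Logically your deduction still yields the corollary (the stated, more restrictive lower bound on $\gamma$ trivially implies the one the theorem requires, and $\gamma\ge-\cosh 1$ is then automatic), but the equivalence you assert is false as written; the clean statement should carry the lower bound $-\cos 1-\sin 1/\bigl(2((1.2)^2-\cos 1)\bigr)$ rather than $-\sin 1/\bigl(2((1.2)^2-\cos 1)\bigr)$.
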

We obtain the following examples as a byproduct of Lemma \cite[Theorem 3.2d, p.86]{Miller & Mocanu} and Theorem \ref{t6}, for a suitable selection of different parameters. Choose $\gamma=-3/5$ and $s=1/2,$ then from Theorem \ref{t6}(ii) we have 
\begin{equation}\label{e68}
 \frac{3}{5 \cos 1} - \frac{2\sin 1}{\cos 1\left(9- 4 \cos 1\right)} \leq \eta \leq \frac{3}{5 \cosh 1}+\frac{2 \sinh 1}{\cosh 1\left(9 - 4 \cosh 1\right)}.
 \end{equation} Substitute $a=1=n,$ $\beta=\eta,$ $\gamma=-3/5$ and $h(z)=(1+(1/2)z)^{2}$ in \cite[Theorem 3.2d, p.86]{Miller & Mocanu}, then the open door function, which is univalent in $\mathbb{D},$ reduces to  \begin{equation}\label{e70}
 R_{\eta - 3/5,1}(z)=(\eta - 3/5)(1+z)/(1-z)+2 z/(1-z^{2}).
 \end{equation}


\begin{example}\label{e67}
	Let $\eta$ be given by \eqref{e68}, $\operatorname{Re}(\eta)>3/5$ and $R_{\eta - 3/5,1}(z)$ be given by \eqref{e70}. If 
	  \[\eta(1+(1/2)z)^{2}\prec R_{\eta - 3/5,1}(z)+3/5,\]  then 
	\[ p(z) =\left(\eta \int_{0}^{1} t^{\eta- 8/5} e^{(t-1)( z \eta( z(1 + t)+8))/8} dt \right)^{-1}  + \frac{3}{5 \eta} \]
is analytic in $\mathbb{D},$ and it is a solution of the differential equation $p(z)+z p'(z)/(\eta p(z) -3/5) = (1+ (1/2)z)^{2}$  and satisfies $\operatorname{Re}(\eta p(z))>3/5.$ Furthermore,  $p(z)\prec \cosh \sqrt{z}.$ 
\end{example}
\noindent By taking $\eta=1/2,$ in Theorem \ref{t6}(iii), we deduce that 
\begin{equation}\label{e69}
-\frac{1}{2} \left(\cos 1+\frac{\sin 1}{e-\cos 1}\right)\leq \gamma \leq -\frac{1}{2} \left(\cosh 1-\frac{\sinh 1}{e-\cosh 1}\right).
 \end{equation} Choose $n=a=1,\beta=\eta=1/2$  and $h(z)=e^{z}$ in 
	\cite[Theorem 3.2d, p.86]{Miller & Mocanu}, then the open door function, which is univalent in $\mathbb{D},$ becomes \begin{equation}\label{e71}
 R_{\gamma +1/2,1} (z)=(\gamma +1/2)(1+z)/(1-z) + 2z/(1-z^{2}).
 \end{equation} 
\begin{example}\label{ex1}
Let $\gamma$ be given by \eqref{e69},   
$\operatorname{Re}\gamma>-1/2$ and $R_{\gamma +1/2,1}(z)$ be given by \eqref{e71}. If 
\[\gamma + e^{z}/2 \prec  R_{\gamma +1/2,1} (z), \] then 
	\[p(z)=\left(\frac{1}{2}\int_{0}^{1}t^{\gamma-1} \left(e^{Chi(tz)-Chi(z)+Shi(tz)-Shi(z)} \right)^{1/2}  dt \right)^{-1}-2 \gamma, \] 
	is analytic in $\mathbb{D},$ and it is a solution of the differential equation  $p(z)+2z p'(z)/(p(z) +2\gamma)=e^{z},$ 
 where 
	$Chi({z}):=\xi+\log z+\displaystyle{\int_{0}^{z}(\cosh{t}-1)/t}$ $dt$  and  $Shi(z)=\displaystyle{\int_{0}^{z}\sinh t /t}$ $dt$, and satisfies $\operatorname{Re}p(z)>-2\gamma.$ Then $p(z)\prec \cosh\sqrt{z}.$ 
\end{example}

\begin{theorem}\label{t9}
	Let $-1\leq B< A \leq 1$ and $\eta,\gamma\in\mathbb{R}$ such that $\gamma\neq -\eta,$ satisfy the following conditions:
	
	\begin{enumerate}[(i)]
		\item $(1-B^{2}) \sinh 1 + 2(\gamma + \eta \cosh 1)(\cosh 1 - 1 + B(A - B \cosh 1)) \geq 0$
		\item $(\sinh 1 + 2 (\cosh 1 -1)(\gamma +\eta \cosh 1))^{2} \geq (B \sinh 1 - 2(A - B \cosh 1)(\gamma + \eta \cosh 1))^{2}$
		\item $(1-B^{2}) \sin 1 + 2(\gamma +\eta \cos 1)(1 - \cos 1 - B(A - B \cos 1))\geq 0$
		\item $(\sin 1 + 2(\cos 1 - 1)(\gamma + \eta \cos 1))^{2} \geq (B \sin 1+2(A - B \cos 1)(\gamma + \eta \cos 1))^{2}.$
	\end{enumerate}
	Let $p(z)$ be an analytic function, such that  $p(0)=1$ and satisfies
	\begin{equation*} 
		p(z)+\frac{zp'(z)}{\eta p(z) + \gamma} \prec \frac{1+A z}{1 + B z},
	\end{equation*}
	then
	$p(z)\prec \cosh \sqrt{z}.$ 
\end{theorem}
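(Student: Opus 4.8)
The plan is to mimic the method of Theorem~\ref{t6} verbatim, replacing the dominant regions of that theorem with the Janowski disc $\Omega_{A,B}=\{\omega:|\omega-1|<|A-B\omega|\}$. First I would set $w(z)=(\cosh^{-1} p(z))^2$, so that $p(z)=\cosh\sqrt{w(z)}$ with $w$ analytic and $w(0)=0$, and argue by contradiction: if $p\not\prec\cosh\sqrt z$ then there is $z_0\in\mathbb{D}$ with $\max_{|z|\le|z_0|}|w(z)|=|w(z_0)|=1$, whence Lemma~\ref{l8} gives $z_0w'(z_0)=k\,w(z_0)$ for some $k\ge 1$. Writing $w(z_0)=e^{2it}$ with $t\in[-\pi/2,\pi/2]$, the quantity $\mathfrak{B}(z_0)=p(z_0)+z_0p'(z_0)/(\eta p(z_0)+\gamma)$ becomes $\cosh e^{it}+k e^{it}\sinh e^{it}/(2(\eta\cosh e^{it}+\gamma))$, exactly as in \eqref{e40}. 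The goal is then to show that $\mathfrak{B}(z_0)\notin\Omega_{A,B}$, i.e.\ $|\mathfrak{B}(z_0)-1|\ge|A-B\,\mathfrak{B}(z_0)|$, which contradicts the subordination hypothesis and finishes the proof.

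The core computation is to form the function
\[
G(t):=|\mathfrak{B}(z_0)-1|^2-|A-B\,\mathfrak{B}(z_0)|^2
\]
on $[-\pi/2,\pi/2]$ and show $G(t)\le 0$ is impossible — more precisely, to show $G(t)\ge 0$ throughout. Using $\mathfrak{B}(z_0)=U(t)+iV(t)$ with $U,V$ as displayed in the proof of Theorem~\ref{t6}(iii), one has $G(t)=(U-1)^2+V^2-(A-BU)^2-B^2V^2=(1-B^2)(U^2+V^2)-2(1-AB)U+(1-A^2)$. Since $G$ is even in $t$ (because $U$ is even and $V$ is odd), it suffices to check $t\in[0,\pi/2]$, and — as in Theorem~\ref{t6} — a direct examination shows the minimum of $G$ over $[0,\pi/2]$ is attained at an endpoint, $t=0$ or $t=\pi/2$. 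At $t=0$ we have $\mathfrak{B}(z_0)=\cosh 1+k(\eta\sinh 2+2\gamma\sinh 1)/(4(\gamma+\eta\cosh 1)^2)$, a real number; at $t=\pi/2$, $\mathfrak{B}(z_0)=\cos 1-k(\eta\sin 2+2\gamma\sin 1)/(4(\gamma+\eta\cos 1)^2)$, again real. For real $x$ the condition $G\ge 0$ reads $(1-Bx)^2\ge(1-x)^2$, equivalently (after factoring) $(A-B)\bigl(2-(1+B)x\bigr)\ge 0$ — using $\cosh^{-1}$-monotonicity of the linear combination in $k$ — which one checks reduces to the stated hypotheses (i)--(ii) at $t=0$ and (iii)--(iv) at $t=\pi/2$, via the same $k\ge 1$ monotonicity argument used for $\xi(k)$ and $\zeta(k)$ in Theorem~\ref{t6}(iii). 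Conditions (i) and (iii) are precisely the statements that the relevant expressions are monotone (nonnegative derivative) in $k$, while (ii) and (iv) are the endpoint evaluations at $k=1$ written as a difference of squares being nonnegative.

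The main obstacle I anticipate is purely computational bookkeeping rather than conceptual: verifying rigorously that the minimum of $G(t)$ on $[0,\pi/2]$ occurs only at the endpoints. In Theorem~\ref{t6} this is asserted as ``a computation reveals'' without detail, and the same will be true here — one differentiates $G$, and the interior critical-point analysis is messy because $U(t),V(t)$ involve nested trigonometric and hyperbolic functions. I would handle it the same way the authors do: reduce to the two endpoint values and present the resulting algebraic inequalities, noting that they factor through $\gamma+\eta\cosh 1$ and $\gamma+\eta\cos 1$ respectively. A secondary point to be careful about is the sign of $A-B$: since $-1\le B<A\le 1$ we have $A-B>0$, which is what makes the linear factor $2-(1+B)x$ the operative one and lets hypotheses (i)--(iv) be stated as they are; I would flag the degenerate case $\gamma+\eta\cosh 1=0$ or $\gamma+\eta\cos 1=0$ separately, though the constraint $\gamma\neq-\eta$ and the ranges involved should keep $\mathfrak{B}(z_0)$ well-defined there.
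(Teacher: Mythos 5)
Your overall plan is exactly the route the paper intends: the paper omits the proof of Theorem \ref{t9}, stating it is ``much akin'' to Theorem \ref{t6}, and your proposal (Jack-type Lemma \ref{l8} applied to $w=(\cosh^{-1}p)^2$, evaluation of $\mathfrak{B}(z_0)$ at $w(z_0)=e^{2it}$, evenness in $t$, reduction to the endpoints $t=0,\pi/2$ where $\mathfrak{B}(z_0)$ is real) is that method with the Janowski disc as target. Leaving the interior-minimum claim at the level of ``a computation reveals'' is also consistent with the paper's own standard of rigor in Theorem \ref{t6}.

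However, the displayed ``core computation'' contains a step that fails as written. For real $x=\mathfrak{B}(z_0)$ the exclusion $x\notin\Omega_{A,B}$ is $(x-1)^2\ge (A-Bx)^2$, not ``$(1-Bx)^2\ge(1-x)^2$'', and it factors as $\bigl((1+B)x-(1+A)\bigr)\bigl((1-B)x-(1-A)\bigr)\ge 0$, not as $(A-B)\bigl(2-(1+B)x\bigr)\ge 0$ (your version drops $A$ and reverses the inequality). The correct way to tie this to (i)--(iv), and the fix you should substitute, is: with $c=\gamma+\eta\cosh 1$ and $\mathfrak{B}(z_0)=\cosh 1+k\sinh 1/(2c)$ at $t=0$, multiply by $4c^2$ to get
\begin{equation*}
P(k):=\bigl(2c(\cosh 1-1)+k\sinh 1\bigr)^2-\bigl(2c(A-B\cosh 1)-kB\sinh 1\bigr)^2\ge 0 ,
\end{equation*}
whose $k$-derivative equals $2\sinh 1\bigl[(1-B^2)k\sinh 1+2c(\cosh 1-1+B(A-B\cosh 1))\bigr]$, nonnegative for all $k\ge1$ by hypothesis (i) (using $1-B^2\ge0$), while $P(1)\ge0$ is precisely hypothesis (ii); an identical computation at $t=\pi/2$ with $c'=\gamma+\eta\cos 1$ gives the derivative condition (iii) and the $k=1$ value $\bigl(\sin 1-2(\cos 1-1)c'\bigr)^2\ge\bigl(B\sin 1+2(A-B\cos 1)c'\bigr)^2$, which matches (iv) only up to the sign of the $(\cos 1-1)$ term in the first square --- so when you write this out, flag that apparent sign discrepancy in the stated condition (iv) rather than asserting an exact match. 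With this repair your argument goes through and coincides in substance with the omitted proof.
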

\noindent The proof of Theorem \ref{t9} is much akin to the previous results, so it is omitted.

The following corollaries illustrate specific outcomes of Theorem \ref{t9}, derived by substituting $p(z) = zf'(z)/f(z)$ and setting the parameters as follows: $A = 1$, $B = 0$, $\gamma = 0$; and $A = 0$, $B = -1/2$, $\eta = 1$, respectively.
\begin{corollary}\label{e73}
Let $\eta\in\mathbb{R}\setminus\{0\}$ such that   
$-(\tanh 1\sech 1)/2\leq \eta \leq (\tanh 1)/(4-2\cosh 1).$ If  $f\in\mathcal{A}$ satisfies 
\begin{equation}\label{e74}
1+\frac{zf''(z)}{(1-\eta)f'(z)}-\frac{zf'(z)}{f(z)}\prec \frac{\eta z}{1-\eta},
\end{equation}
then $f\in\mathcal{S}^{*}_{\varrho}.$
\end{corollary}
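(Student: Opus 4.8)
\textbf{Proof proposal for Corollary \ref{e73}.}

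The plan is to recognize this corollary as a direct specialization of Theorem \ref{t9} under the substitution $p(z) = zf'(z)/f(z)$, $A = 0$, $B = -1/2$, and $\eta = 1$ (the parameter $\eta$ here playing the role of the Briot--Bouquet coefficient). First I would verify that with $p(z) = zf'(z)/f(z)$ one has the standard logarithmic-derivative identity
\[
\frac{zp'(z)}{\eta p(z)+\gamma} = \frac{z\left(1+\dfrac{zf''(z)}{f'(z)}-\dfrac{zf'(z)}{f(z)}\right)\cdot \dfrac{f(z)}{z f'(z)}\cdot \dfrac{zf'(z)}{f(z)}}{\eta \dfrac{zf'(z)}{f(z)}+\gamma},
\]
so that, after setting $\gamma$ appropriately (here $\gamma = 0$ is forced by the clean form of \eqref{e74}, while the displayed parameter renamed $\eta$ is the Janowski-type constant $\eta$ in Theorem \ref{t9} rather than its Briot--Bouquet $\eta$; I would be careful to reconcile the two roles of the letter in the write-up), the left-hand side of the Briot--Bouquet subordination collapses to $1+\dfrac{zf''(z)}{(1-\eta)f'(z)}-\dfrac{zf'(z)}{f(z)}$ and the Janowski dominant $(1+Az)/(1+Bz)$ with $A=0$, $B=-1/2$ scaled by the residual factor becomes $\eta z/(1-\eta)$. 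The algebra here is routine but bookkeeping-heavy, and it is the place where sign and normalization errors are most likely to creep in.

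Next I would substitute $A = 0$ and $B = -1/2$ into the four admissibility inequalities (i)--(iv) of Theorem \ref{t9} and simplify. Conditions (i) and (iii) will each reduce to a linear inequality in the parameter, yielding one of the two endpoints of the claimed interval $-\tfrac{1}{2}\tanh 1\,\sech 1 \le \eta \le \tfrac{\tanh 1}{4-2\cosh 1}$; conditions (ii) and (iv), being differences of squares, factor and give the complementary bound (one expects the lower bound to come from the $\cosh$/$\sinh$ pair and the upper bound from the $\cos$/$\sin$ pair, or vice versa). I would check that the four resulting constraints are mutually consistent and that their intersection is exactly the stated interval, not a proper subset of it; in particular one must confirm that $4 - 2\cosh 1 > 0$ so the right endpoint is positive and the interval is nonempty.

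Finally, with the hypotheses of Theorem \ref{t9} verified for this parameter choice, that theorem yields $p(z) \prec \cosh\sqrt{z}$, i.e. $zf'(z)/f(z) \prec \cosh\sqrt{z}$, which is precisely the statement $f \in \mathcal{S}^{*}_{\varrho}$. The main obstacle is not any deep idea but the careful translation between the two appearances of the symbol $\eta$ (Briot--Bouquet coefficient versus Janowski scaling constant) and the verification that plugging $A=0$, $B=-1/2$, $\gamma=0$ into the somewhat intricate inequalities (i)--(iv) genuinely produces the compact interval claimed; I would organize that verification as four short sub-computations, each ending in the explicit constant, so the reader can check them independently.
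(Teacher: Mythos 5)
Your overall strategy (specialize Theorem \ref{t9} with $p(z)=zf'(z)/f(z)$) is exactly the paper's route, but your parameter identification is wrong, and with it the whole reduction collapses. The choice $A=0$, $B=-1/2$, $\eta=1$ (with $\gamma$ free) is the one the paper uses for the \emph{next} corollary, whose dominant is $2/(2-z)$ and whose hypothesis constrains $\gamma$. For Corollary \ref{e73} the correct specialization is $A=1$, $B=0$, $\gamma=0$, and the $\eta$ in the corollary \emph{is} the Briot--Bouquet coefficient, not a Janowski-type constant. Indeed, with these choices the hypothesis of Theorem \ref{t9} reads
\[
\frac{zf'(z)}{f(z)}+\frac{1}{\eta}\Bigl(1+\frac{zf''(z)}{f'(z)}-\frac{zf'(z)}{f(z)}\Bigr)\prec 1+z,
\]
and applying the affine map $w\mapsto \eta(w-1)/(1-\eta)$ (subordination is preserved under nonconstant affine maps; $\eta\neq 0$, and the admissible range keeps $\eta<1$) turns the left-hand side into $1+\dfrac{zf''(z)}{(1-\eta)f'(z)}-\dfrac{zf'(z)}{f(z)}$ and the dominant $1+z$ into $\eta z/(1-\eta)$, which is precisely \eqref{e74}. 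In your setup, by contrast, fixing $\eta=1$ and $\gamma=0$ leaves no free parameter at all, so nothing can generate either the $\eta$ appearing in \eqref{e74} or the interval $-(\tanh 1\,\sech 1)/2\le\eta\le(\tanh 1)/(4-2\cosh 1)$; moreover, with $p=zf'/f$ and $\eta=1$, $\gamma=0$ the Briot--Bouquet expression is $\frac{zf'}{f}\bigl(1+\bigl(1+\frac{zf''}{f'}-\frac{zf'}{f}\bigr)/\frac{zf'}{f}\bigr)$, which no affine renormalization compatible with carrying $2/(2-z)$ onto $\eta z/(1-\eta)$ can convert into the left-hand side of \eqref{e74}. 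So the step ``the algebra collapses to \eqref{e74}'' would fail as written.

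With the parameters fixed correctly, the stated endpoints come from condition (ii) of Theorem \ref{t9}: for $A=1$, $B=0$, $\gamma=0$ it factors as $(\sinh 1-2\eta\cosh 1(2-\cosh 1))(\sinh 1+2\eta\cosh^{2}1)\ge 0$, whose two factors give exactly $\eta\le(\tanh 1)/(4-2\cosh 1)$ and $\eta\ge-(\tanh 1\,\sech 1)/2$, while (i) and (iii) are strictly weaker on this range. Your instinct to verify that the intersection of all four conditions is exactly the stated interval is the right one, but be aware it is not purely routine: condition (iv) factors analogously as $(\sin 1-2\eta\cos 1(2-\cos 1))(\sin 1+2\eta\cos^{2}1)\ge 0$ and imposes $\eta\le \sin 1/(2\cos 1(2-\cos 1))\approx 0.533$, which is stricter than the stated upper endpoint $\approx 0.833$; so the stated interval reflects (ii) alone, and a complete write-up must either restrict the interval or explain why (iv) can be dispensed with. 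That is the one place where the specialization requires genuine care rather than bookkeeping.
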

\begin{corollary}
Let $\gamma \in\mathbb{R}\setminus\{-1\}$ such that
$(\sin 1+4 \cos 1-\cos 2-1)/(2 \cos 1-4)\leq \gamma \leq (4 \cosh 1-\sinh 1-2 \cosh ^2 1)(2 \cosh 1-4).$ If $f\in\mathcal{A}$ satisfies 
\[\frac{zf'(z)}{f(z)}\left(1+\dfrac{1+\dfrac{zf''(z)}{f'(z)}-\dfrac{zf'(z)}{f(z)}}{\dfrac{zf'(z)}{f(z)}+\gamma}\right)\prec \frac{2}{2-z},\] then $f\in\mathcal{S}^{*}_{\varrho}.$
\end{corollary}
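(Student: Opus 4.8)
The plan is to derive this corollary as the specialization of Theorem~\ref{t9} obtained by setting $p(z)=zf'(z)/f(z)$ together with $A=0$, $B=-1/2$, $\eta=1$. First I would check that, under these choices, the hypothesis of Theorem~\ref{t9} is literally the hypothesis of the corollary. The logarithmic derivative of $p(z)=zf'(z)/f(z)$ yields $zp'(z)/p(z)=1+zf''(z)/f'(z)-zf'(z)/f(z)$, so with $\eta=1$ one has
\[
p(z)+\frac{zp'(z)}{\eta p(z)+\gamma}=\frac{zf'(z)}{f(z)}\left(1+\frac{1+\dfrac{zf''(z)}{f'(z)}-\dfrac{zf'(z)}{f(z)}}{\dfrac{zf'(z)}{f(z)}+\gamma}\right),
\]
which is exactly the subordinand of the corollary, while $(1+Az)/(1+Bz)=1/(1-z/2)=2/(2-z)$ matches the dominant. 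It then remains to show that the stated range of $\gamma$ forces conditions (i)--(iv) of Theorem~\ref{t9}.

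Next I would substitute $A=0$, $B=-1/2$, $\eta=1$ into (i)--(iv) and reduce each to an explicit inequality in $\gamma$; writing $u=\gamma+\cosh 1$ and $v=\gamma+\cos 1$ keeps the algebra transparent. Conditions (i) and (iii) become the linear inequalities
\[
\tfrac34\sinh 1+2u\big(\tfrac34\cosh 1-1\big)\ge 0,\qquad \tfrac34\sin 1+2v\big(1-\tfrac34\cos 1\big)\ge 0,
\]
in which the coefficients $\tfrac34\cosh 1-1>0$ and $1-\tfrac34\cos 1>0$; since $u>0$ on the stated range and the trigonometric slack is easily bounded below, both hold there and neither is binding. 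Conditions (ii) and (iv) are each a difference of two squares and hence factor into a product of two linear terms; for (ii) this product is
\[
\Big(\tfrac12\sinh 1+(\cosh 1-2)u\Big)\Big(\tfrac32\sinh 1+(3\cosh 1-2)u\Big)\ge 0,
\]
and (iv) factors into the trigonometric analogue in $v$.

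I would then perform the sign analysis of these two products, which is where the two endpoints of the interval are produced. In (ii) the factor $\tfrac12\sinh 1+(\cosh 1-2)u$ has negative leading coefficient and the other factor positive leading coefficient, so admissibility is the interval between their roots; the upper root $u=\sinh 1/(2(2-\cosh 1))$, after clearing denominators (which produces the $2\cosh^2 1$ term), gives precisely the upper endpoint $\gamma=(4\cosh 1-\sinh 1-2\cosh^2 1)/(2\cosh 1-4)$. For (iv) the active factor vanishes at $v=-\sin 1/(2(2-\cos 1))$, and substituting $\cos 2=2\cos^2 1-1$ turns this into the lower endpoint $\gamma=(\sin 1+4\cos 1-\cos 2-1)/(2\cos 1-4)$. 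Thus on the stated interval all of (i)--(iv) hold, with (ii) controlling the right endpoint and (iv) the left, and Theorem~\ref{t9} then gives $p(z)\prec\cosh\sqrt z$, i.e.\ $f\in\mathcal{S}^{*}_{\varrho}$.

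The step I expect to be the main obstacle is the sign bookkeeping in the two quadratic conditions. One must identify correctly which of the two linear factors is active on the interval and verify that its root coincides with the stated endpoint; this is delicate because $v=\gamma+\cos 1$ changes sign as $\gamma$ runs over the interval, so the common denominator $2(\eta\cos 1+\gamma)$ hidden in the reduction switches sign and the relevant branch of each difference-of-squares factorization switches with it. It is exactly this sign flip that makes the constant term of the binding factor enter as $+\tfrac12\sinh 1$ in (ii) but as $-\tfrac12\sin 1$ in (iv), forcing the upper endpoint to come from the hyperbolic condition and the lower endpoint from the trigonometric one. A secondary point requiring genuine verification, rather than assumption, is that the linear conditions (i) and (iii) are dominated by (ii) and (iv) throughout, so that the interval cut out by the quadratic conditions really does lie inside the half-lines defined by the linear ones.
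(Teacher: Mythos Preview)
Your overall plan---specialize Theorem~\ref{t9} with $p(z)=zf'(z)/f(z)$, $A=0$, $B=-1/2$, $\eta=1$---is exactly what the paper does; it states the corollary as a direct substitution without writing out the verification. Your identification of the Briot--Bouquet expression and of $2/(2-z)$ is correct, and your factorization of condition (ii) is right: the factor $\tfrac12\sinh 1+(\cosh 1-2)u$ vanishes precisely at $u=\sinh 1/(2(2-\cosh 1))$, which rewrites as the stated upper endpoint for $\gamma$.

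The gap is in your treatment of the lower endpoint. Because the sign in front of the $2(A-B\cos 1)$ term in (iv) is $+$ rather than the $-$ appearing in (ii), the two linear factors of (iv) are $\tfrac32\sin 1+(\cos 1-2)v$ and $\tfrac12\sin 1+(3\cos 1-2)v$; their roots are $3\sin 1/(2(2-\cos 1))\approx 0.865$ and $\sin 1/(2(2-3\cos 1))\approx 1.109$, neither of which equals $-\sin 1/(2(2-\cos 1))\approx -0.288$. So no factor of (iv) vanishes at the stated lower endpoint, and your ``sign flip'' heuristic does not produce it. In fact, at $\gamma$ equal to the stated lower bound all four conditions (i)--(iv) hold with \emph{strict} inequality: the tightest genuine lower constraint is condition (iii), which only becomes an equality near $\gamma\approx -1.07$. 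Thus the corollary's interval is a proper sub-interval of the range where Theorem~\ref{t9} applies; you can still complete the proof simply by checking that (i)--(iv) are all satisfied throughout the stated interval, but you should abandon the expectation that (iv) is what pins down the left endpoint.
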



Note that  \eqref{e74} in Corollary \ref{e73} is equivalent to: 
\[\left|1+\frac{zf''(z)}{(1-\eta)f'(z)}-\frac{zf'(z)}{f(z)}\right| < \left|\frac{\eta}{1 - \eta }\right|.\] 

It is observed that for $g\in\mathcal{A},$ the Briot-Bouquet differential equation is closely related to the  Bernardi integral operator \cite{Miller & Mocanu}, given by
\begin{equation}\label{e37}
G(z):=\frac{\eta +\gamma}{z^{\gamma}}\int_{0}^{z} g^{\eta}(t) t^{\gamma-1} dt.
\end{equation}
If $p(z)=zG'(z)/G(z),$ then we have the following Briot-Bouquet differential equation:
\begin{equation}\label{e72}
p(z)+\frac{zp'(z)}{\eta p(z)+\gamma}=\frac{zg'(z)}{g(z)}.
\end{equation}
Using this fact, we derive the following corollary, as a consequence of Theorem \ref{t6} and \ref{t9} :

\begin{corollary}\label{c2}
Let $-1\leq B< A \leq 1$ and $\eta,\gamma\in\mathbb{R}$ such that $\gamma\neq -\eta.$  Assume the  conditions as outlined in Theorem \ref{t6} \eqref{e29}-\eqref{e28} and Theorem \ref{t9}(i)-(iv) holds.  If  $g\in\mathcal{S}^{*}(\phi),$ then $G\in\mathcal{S}^{*}_{\varrho}$ for the choices of $\phi(z):$ $z+\sqrt{1+z^{2}},$ $(1+sz)^{2},$ $e^{z}$ and $(1+Az)/(1+Bz)$  
respectively.
\end{corollary}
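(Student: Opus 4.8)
The plan is to combine the two threads already established in the paper: the Briot--Bouquet differential subordination implications of Theorem~\ref{t6} and Theorem~\ref{t9}, together with the identity \eqref{e72} linking the Bernardi operator $G$ of \eqref{e37} to a Briot--Bouquet equation. First I would recall that if $g\in\mathcal{S}^{*}(\phi)$, then by definition $zg'(z)/g(z)\prec\phi(z)$ in $\mathbb{D}$. Next I would set $p(z)=zG'(z)/G(z)$, where $G$ is the Bernardi transform \eqref{e37} of $g$ with parameters $\eta,\gamma$; a standard computation (the one sketched between \eqref{e37} and \eqref{e72}) shows that $p$ is analytic in $\mathbb{D}$ with $p(0)=1$ and that it satisfies the Briot--Bouquet differential equation \eqref{e72}, namely
\[
p(z)+\frac{zp'(z)}{\eta p(z)+\gamma}=\frac{zg'(z)}{g(z)}.
\]

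Having this, the subordination $zg'(z)/g(z)\prec\phi(z)$ immediately gives
\[
p(z)+\frac{zp'(z)}{\eta p(z)+\gamma}\prec\phi(z),
\]
which is precisely the hypothesis \eqref{e59} (respectively the hypothesis of Theorem~\ref{t9}). Then I would invoke the appropriate part of Theorem~\ref{t6} or Theorem~\ref{t9}: for $\phi(z)=z+\sqrt{1+z^{2}}$ use Theorem~\ref{t6}(i) under \eqref{e29}; for $\phi(z)=(1+sz)^{2}$ use Theorem~\ref{t6}(ii) under \eqref{e43}; for $\phi(z)=e^{z}$ use Theorem~\ref{t6}(iii) under \eqref{e28}; and for $\phi(z)=(1+Az)/(1+Bz)$ use Theorem~\ref{t9} under conditions (i)--(iv). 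In each case the conclusion is $p(z)\prec\cosh\sqrt{z}$, i.e. $zG'(z)/G(z)\prec\varrho(z)$, which is exactly the statement $G\in\mathcal{S}^{*}_{\varrho}$.

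There is essentially no obstacle here beyond bookkeeping: the only point that needs a line of justification is that the Bernardi operator is well defined and analytic with the normalization $G(z)=z+\cdots$ (so that $p(0)=1$), which follows from $g\in\mathcal{A}$ and $\gamma\neq-\eta$; one should note in passing that $G\in\mathcal{A}$ and that $p=zG'/G$ is analytic near $0$ because $G(z)/z\to1$. The mild subtlety, if one wants to be careful, is verifying that the hypotheses of Theorem~\ref{t6}/Theorem~\ref{t9} are stated for real $\eta,\gamma$ with $\gamma\neq-\eta$, which matches the standing assumption of the corollary, so no extra positivity or admissibility condition needs to be checked separately --- it is already baked into \eqref{e29}--\eqref{e28} and (i)--(iv). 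Thus the proof is a direct concatenation: $g\in\mathcal{S}^{*}(\phi)\Rightarrow$ \eqref{e72} $\Rightarrow$ hypothesis of the relevant theorem $\Rightarrow p\prec\varrho\Rightarrow G\in\mathcal{S}^{*}_{\varrho}$, carried out once for each of the four choices of $\phi$.
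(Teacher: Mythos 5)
Your proposal is correct and matches the paper's proof: the paper also sets $p(z)=zG'(z)/G(z)$, uses the Briot--Bouquet identity \eqref{e72} together with $zg'(z)/g(z)\prec\phi(z)$, and then applies Theorem \ref{t6}(i)--(iii) or Theorem \ref{t9} to conclude $p(z)\prec\cosh\sqrt{z}$. Your extra remarks on analyticity and the normalization $p(0)=1$ are just a more careful spelling-out of what the paper leaves implicit.
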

\begin{proof}
Let us first prove the result for the case when $\phi(z)=z+\sqrt{1+z^{2}},$ and other cases will follow in the similar fashion.	Assume that $p(z)=z G'(z)/G(z),$ where $G(z)$ is given by \eqref{e37}. Since  
$g\in\mathcal{S}^{*}(\phi),$ then from \eqref{e72}, we have  \[p(z)+\frac{zp'(z)}{\eta p(z)+\gamma}= \frac{zg'(z)}{g(z)}\prec z+\sqrt{1+z^{2}}.\] Now the result follows at once by an  application of Theorem \ref{t6}(i).
\end{proof}



\section{First order differential subordination Results}
In this section, we study certain differential subordination implications result involving the expressions: $1+\eta zp'(z)/p(z),$ $1+\eta z p'(z)$ and  $\eta p(z)+zp'(z)/p(z).$  
\noindent We require the following Miller and Mocanu's Lemma to derive our results. 
\begingroup
\setcounter{tmp}{\value{theo}}
\setcounter{theo}{1} 
\renewcommand\thetheo{\Alph{theo}}
\begin{theo}\cite{Miller & Mocanu} \label{l7}
Let	$q$ be analytic in $\mathbb{D}$ and let $\varphi$ be analytic in domain $\mathcal{D}$ containing $q(\mathbb{D})$ with $\varphi(w)\neq 0$ when $w\in q(\mathbb{D}).$ Set $\mathcal{Q}(z):=zq'(z)\varphi(q(z))$ and $h(z):=\nu(q(z))+\mathcal{Q}(z).$ Suppose (i.) either $h$ is convex, or $\mathcal{Q}$ is starlike univalent in $\mathbb{D}$ and (ii.) $\operatorname{Re}(zh'(z)/\mathcal{Q}(z))>0 $ for $z\in\mathbb{D}.$ If $p$ is analytic in $\mathbb{D},$ with $p(0)=q(0),$ $p(\mathbb{D})\subset \mathcal{D}$ and 
\[\nu(p(z))+zp'(z)\varphi(p(z))\prec \nu(q(z))+zq'(z)\varphi(q(z)),\]
then $p(z)\prec q(z),$ and $q$ is the best dominant.
\end{theo}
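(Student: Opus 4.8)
\noindent The plan is to prove this classical Miller--Mocanu theorem by the boundary-maximum device already used in the proof of Theorem~\ref{t6}, now applying Lemma~\ref{l8} to the candidate Schwarz function $w:=q^{-1}\circ p$ instead of to $\bigl(\cosh^{-1}p\bigr)^{2}$. First I would carry out the usual reductions: assuming, as is standard in this framework, that $q$ is univalent, a dilation argument --- replace $p$ and $q$ by $p(\rho z)$ and $q(\rho z)$, prove the conclusion for each $\rho<1$, and let $\rho\uparrow1$ --- reduces matters to the case where $q$ is analytic and univalent on $\overline{\mathbb{D}}$, $h=\nu\circ q+\mathcal{Q}$ extends continuously to $\overline{\mathbb{D}}$ with $h(\partial\mathbb{D})=\partial h(\mathbb{D})$, and $w:=q^{-1}\circ p$ is analytic on $\mathbb{D}$ with $w(0)=0$; in this normalization $p\prec q$ is precisely the assertion $|w(z)|<1$ throughout $\mathbb{D}$.

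\smallskip
\noindent Suppose this fails. Then there is $z_{0}\in\mathbb{D}$ with $\max_{|z|\le|z_{0}|}|w(z)|=|w(z_{0})|=1$; put $\zeta_{0}:=w(z_{0})\in\partial\mathbb{D}$. By Lemma~\ref{l8} we have $z_{0}w'(z_{0})=k\zeta_{0}$ for some $k\ge1$, and differentiating $p=q\circ w$ gives $z_{0}p'(z_{0})=q'(\zeta_{0})\,z_{0}w'(z_{0})=k\zeta_{0}q'(\zeta_{0})$; hence
\[
\nu\bigl(p(z_{0})\bigr)+z_{0}p'(z_{0})\,\varphi\bigl(p(z_{0})\bigr)
=\nu\bigl(q(\zeta_{0})\bigr)+k\zeta_{0}q'(\zeta_{0})\,\varphi\bigl(q(\zeta_{0})\bigr)
=h(\zeta_{0})+(k-1)\,\mathcal{Q}(\zeta_{0}).
\]
Because the left-hand side lies in $h(\mathbb{D})$ by hypothesis, it now suffices to show that $h(\zeta_{0})+(k-1)\mathcal{Q}(\zeta_{0})\notin h(\mathbb{D})$ for every $k\ge1$; this is the essential point.

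\smallskip
\noindent For that geometric step, observe that the tangent direction to $\partial h(\mathbb{D})$ at $h(\zeta_{0})$ is $i\zeta_{0}h'(\zeta_{0})$, so $\zeta_{0}h'(\zeta_{0})$ points along the outward normal; hypothesis~(ii), which reads $\operatorname{Re}\bigl(\zeta_{0}h'(\zeta_{0})/\mathcal{Q}(\zeta_{0})\bigr)>0$ or equivalently $\operatorname{Re}\bigl(\overline{\zeta_{0}h'(\zeta_{0})}\,\mathcal{Q}(\zeta_{0})\bigr)>0$, then says that $\mathcal{Q}(\zeta_{0})$ has a positive outward-normal component. If $h$ is convex, then $h(\mathbb{D})$ lies in the open supporting half-plane at $h(\zeta_{0})$, and moving from $h(\zeta_{0})$ by $(k-1)\mathcal{Q}(\zeta_{0})$ with $k-1\ge0$ cannot decrease the outward-normal coordinate, so the displaced point is not in $h(\mathbb{D})$. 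If instead $\mathcal{Q}$ is starlike univalent, then by hypothesis~(ii) the map $h$ is close-to-convex with respect to $\mathcal{Q}$, and the structure theory of close-to-convex functions shows that the complement of $h(\mathbb{D})$ is a union of pairwise non-crossing half-lines, the one issuing from $h(\zeta_{0})$ being directed along $\mathcal{Q}(\zeta_{0})$; again $h(\zeta_{0})+(k-1)\mathcal{Q}(\zeta_{0})\notin h(\mathbb{D})$. This contradicts the previous paragraph, so $|w|<1$ and $p\prec q$. Finally, $q$ itself is a solution of the subordination --- trivially $\nu(q(z))+zq'(z)\varphi(q(z))=h(z)\prec h(z)$ --- so any dominant $\widetilde q$ satisfies $q\prec\widetilde q$, i.e.\ $q$ is the best dominant. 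I expect the close-to-convex case of the geometric step (pinning down the direction of the boundary half-lines and checking they avoid $h(\mathbb{D})$), together with the boundary-regularity bookkeeping in the dilation reduction, to be the only real obstacles; the rest is formal.
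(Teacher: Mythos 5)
The paper does not prove this statement at all---it is quoted verbatim from Miller and Mocanu's monograph---so your attempt can only be judged against the standard proof there, and as written it has two genuine gaps. First, your contradiction framework is ill-posed: you set $w:=q^{-1}\circ p$ and declare it analytic on $\mathbb{D}$, but $q^{-1}\circ p$ is only defined where $p$ takes values in $q(\mathbb{D})$ (or $q(\overline{\mathbb{D}})$ after dilation), and that containment is essentially the conclusion $p\prec q$ you are trying to prove. Indeed, if $w$ really were analytic on all of $\mathbb{D}$ with $w(0)=0$ and $|w|\le 1$, the maximum principle alone would give $|w|<1$, i.e.\ $p\prec q$, without using any hypothesis of the theorem---a sign that the setup cannot carry the content of the result. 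The correct device, and the one used in Miller--Mocanu, is their boundary-contact lemma (Lemma 2.2d of the book): if $p\not\prec q$, take the largest $r_{0}<1$ with $p(\overline{\mathbb{D}}_{r_{0}})\subset q(\overline{\mathbb{D}})$ and obtain $z_{0}$ with $|z_{0}|=r_{0}$, $\zeta_{0}\in\partial\mathbb{D}$, $p(z_{0})=q(\zeta_{0})$ and $z_{0}p'(z_{0})=m\zeta_{0}q'(\zeta_{0})$, $m\ge1$; only on $|z|\le r_{0}$ does your $w$ make sense. Your subsequent identity $\nu(p(z_{0}))+z_{0}p'(z_{0})\varphi(p(z_{0}))=h(\zeta_{0})+(m-1)\mathcal{Q}(\zeta_{0})$ is then correct and is indeed the right pivot.

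Second, the exclusion step in the starlike case---the analytic heart of the theorem---is asserted rather than proved. The Kaplan/Lewandowski structure theory only says that the complement of a close-to-convex domain can be filled with non-crossing rays; it does not say that the ray issuing from $h(\zeta_{0})$ has direction $\mathcal{Q}(\zeta_{0})$, which is exactly what you need. Miller and Mocanu obtain this from a subordination (Loewner) chain: $L(z,t):=h(z)+t\,\mathcal{Q}(z)$, $t\ge0$, satisfies $\operatorname{Re}\bigl((zh'(z)+tz\mathcal{Q}'(z))/\mathcal{Q}(z)\bigr)=\operatorname{Re}(zh'/\mathcal{Q})+t\operatorname{Re}(z\mathcal{Q}'/\mathcal{Q})>0$ by hypotheses (i)--(ii), so $L$ is a subordination chain; univalence of $L(\cdot,t)$ up to the boundary (after the dilation reduction) together with $h(\mathbb{D})=L(\mathbb{D},0)\subset L(\mathbb{D},t)$ yields $h(\zeta_{0})+(m-1)\mathcal{Q}(\zeta_{0})=L(\zeta_{0},m-1)\notin h(\mathbb{D})$. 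Without this (or an equivalent) argument your proof of the starlike case is incomplete. Your half-plane argument in the convex case and the best-dominant remark (that $q$ solves $\nu(q)+zq'\varphi(q)=h$, so $q\prec\widetilde q$ for any dominant $\widetilde q$) are fine once the boundary-contact step is repaired.
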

\endgroup
\noindent We begin with the following result.
\begin{theorem}\label{t11}
	Suppose   $A,B\in\mathbb{C},$ with $A \neq B$ and $|B|<1,$ $\eta$ be such that \[|\eta|\geq  \dfrac{2|A-B|}{(1-|B|)\tanh 1}.\]   Let $p(z)$ be analytic in $\mathbb{D}$ with $p(0)=1,$ satisfying the subordination \[1+\eta \frac{z p'(z)}{p(z)} \prec \frac{1+A z}{1+B z},\]  then  $p(z)\prec \cosh \sqrt{z}$ and $\cosh \sqrt{z}$ is the best dominant.
\end{theorem}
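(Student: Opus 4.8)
The plan is to apply Lemma~\ref{l7} (the Miller--Mocanu lemma) with the dominant $q(z) = \cosh\sqrt{z}$. To fit the hypotheses, I would take $\nu \equiv 1$ (the constant function) and $\varphi(w) = \eta/w$, so that $\nu(p(z)) + z p'(z)\varphi(p(z)) = 1 + \eta z p'(z)/p(z)$, which matches the left-hand side of the given subordination. Correspondingly, set $\mathcal{Q}(z) = z q'(z)\varphi(q(z)) = \eta z q'(z)/q(z)$ and $h(z) = 1 + \mathcal{Q}(z) = 1 + \eta z q'(z)/q(z)$. Using \eqref{e57}, one computes $q'(z) = \sinh\sqrt{z}/(2\sqrt{z})$, hence $\mathcal{Q}(z) = \eta \sqrt{z}\,\sinh\sqrt{z}/(2\cosh\sqrt{z}) = (\eta/2)\sqrt{z}\tanh\sqrt{z}$, which is analytic in $\mathbb{D}$ with $\mathcal{Q}(0)=0$. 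Since $q(0)=1 = p(0)$ and $\varphi(w)=\eta/w$ is analytic and nonvanishing on $q(\mathbb{D})$ (as $0 \notin q(\mathbb{D})$ because $\RE \cosh\sqrt z > 0$), the structural requirements of the lemma are in place, and it remains only to verify conditions (i) and (ii).

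The key step is therefore to check: (i) either $h$ is convex or $\mathcal{Q}$ is starlike univalent in $\mathbb{D}$; and (ii) $\RE\bigl(z h'(z)/\mathcal{Q}(z)\bigr) > 0$ for $z \in \mathbb{D}$. For (ii), note that since $h = 1 + \mathcal{Q}$ we have $z h'(z) = z\mathcal{Q}'(z)$, so $z h'(z)/\mathcal{Q}(z) = z\mathcal{Q}'(z)/\mathcal{Q}(z)$, and the required positivity is exactly the assertion that $\mathcal{Q}$ is starlike (univalent) in $\mathbb{D}$ — so (i) and (ii) collapse to the single claim that $\mathcal{Q}(z) = (\eta/2)\sqrt{z}\tanh\sqrt{z}$ is starlike univalent. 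Writing $\mathcal{Q}(z) = (\eta/2) g(z)$ with $g(z) = \sqrt z \tanh \sqrt z = z\,(\tanh\sqrt z/\sqrt z)$, the constant $\eta/2$ is irrelevant to starlikeness, so I would show $g$ is starlike. One way: $g(z) = z + \tfrac{z^2}{3}\cdot(\text{lower order})$ actually expand $g(z) = z - z^2/3 + \cdots$ from $\tanh w/w = 1 - w^2/3 + 2w^4/15 - \cdots$, and then verify $\RE(z g'(z)/g(z)) > 0$ directly. Computing $z g'(z)/g(z) = \tfrac12 + \tfrac{\sqrt z}{2}\bigl(\tfrac{1}{\sinh\sqrt z\cosh\sqrt z}\bigr)\cdot$ hmm, more cleanly: with $u = \sqrt z$, $g = u\tanh u$, $z g'/g = (u/2)\cdot(g'(z)\,dz\text{-chain})$; carrying out the differentiation gives $z g'(z)/g(z) = \tfrac12 + \tfrac{u}{2}\,\mathrm{csch}\,u\,\mathrm{sech}\,u\cdot u = \tfrac12 + \tfrac{u^2}{\sinh 2u}$ — wait, I should not grind this here; the point is it reduces to showing $\RE\bigl(\tfrac12 + \tfrac{z}{\sinh(2\sqrt z)}\cdot 2\sqrt z\,/\,(2\sqrt z)\bigr)>0$ type expression has positive real part on $\mathbb{D}$, a one-variable estimate.

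Once starlikeness of $\mathcal{Q}$ is established, both (i) and (ii) hold, so Lemma~\ref{l7} applies and yields $p(z) \prec \cosh\sqrt z$ with $\cosh\sqrt z$ the best dominant, provided the subordinant matches: I must also confirm $1 + \eta z q'(z)/q(z) = h(z)$ and that the hypothesis $1 + \eta z p'(z)/p(z) \prec (1+Az)/(1+Bz)$ can be converted to $1 + \eta z p'(z)/p(z) \prec h(z)$. This last conversion is where the bound $|\eta| \geq 2|A-B|/((1-|B|)\tanh 1)$ enters: one shows $(1+Az)/(1+Bz) \prec h(z)$, equivalently that the image disc $\{(1+Az)/(1+Bz): z\in\mathbb{D}\}$ — a disc centered near $1$ of radius about $|A-B|/(1-|B|)$ — is contained in $h(\mathbb{D}) = 1 + \mathcal{Q}(\mathbb{D})$. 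Since $|\mathcal{Q}(z)| = |\eta/2|\,|\sqrt z \tanh\sqrt z|$ and on $|z| \to 1$ the minimum modulus of $\sqrt z\tanh\sqrt z$ over the boundary is attained at $z=1$ giving value $\tanh 1$, the region $h(\mathbb{D})$ contains the disc $|w - 1| < |\eta|\tanh 1/2$; requiring this to dominate the radius $|A-B|/(1-|B|)$ of the Janowski disc gives precisely the stated condition on $|\eta|$. The main obstacle I anticipate is the starlikeness/univalence verification of $\mathcal{Q}(z) = (\eta/2)\sqrt z\tanh\sqrt z$ — this needs a genuine (if elementary) complex-analytic estimate on $\mathbb{D}$, whereas the containment argument and the algebra of matching the lemma's template are routine.
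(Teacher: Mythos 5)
Your proposal is correct and follows essentially the same route as the paper: Lemma \ref{l7} with $q(z)=\cosh\sqrt{z}$, $\nu\equiv 1$, $\varphi(w)=\eta/w$, starlikeness of $\mathcal{Q}(z)=(\eta/2)\sqrt{z}\tanh\sqrt{z}$ (the paper likewise reduces this to $\operatorname{Re}\bigl(\tfrac12+\sqrt{z}\,\csch 2\sqrt{z}\bigr)>0$ --- note your intermediate formula should have $\sqrt{z}$, not $z$, in the numerator, though this does not affect the argument), and the same use of the minimum value $\tanh 1$ of $|\tanh e^{it/2}|$ at $t=0$ to obtain the stated bound on $|\eta|$. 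The only cosmetic difference is that the paper verifies $(1+Az)/(1+Bz)\prec h(z)$ by checking $|\Psi^{-1}(T(e^{it}))|\geq 1$ with $\Psi^{-1}(w)=(w-1)/(A-Bw)$, whereas you phrase the same estimate as a disc containment (Janowski image inside $\{|w-1|<|A-B|/(1-|B|)\}$ and $h(\mathbb{D})\supset\{|w-1|<|\eta|\tanh 1/2\}$); the two are equivalent.
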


\begin{proof}
	 Let   $q(z)=\cosh \sqrt{z},$ $ \nu (w)=1$ and $\varphi(w)= \eta / w,$ then $\mathcal{Q}(z)=\eta z q'(z)/q(z)$ $=\eta (\sqrt{z} \tanh{\sqrt{z}})/2,$ which implies  $\operatorname{Re}z \mathcal{Q}'(z)/\mathcal{Q}(z)> 0$ for each $z\in\mathbb{D}.$ Hence $\mathcal{Q}(z)$ is starlike in $\mathbb{D}.$ Since $h(z)= \nu(\varrho(z)) + \mathcal{Q}(z),$ we have \[\operatorname{Re} \left(\frac{z h'(z)}{\mathcal{Q}(z)} \right) =\frac{1}{2} + \operatorname{Re} \sqrt{z} \csch 2 \sqrt{z} > \frac{1}{2}+\csch 2 >0.\]  
	Assume $\Psi(z)= (1+Az)/(1+Bz),$ then from the representation of $\Psi(z),$ we can write $\Psi^{-1}(w)=(w-1)/(A-B w).$
	If we choose \[T(z)= 1+\frac{\eta}{2} \sqrt{z} \tanh(\sqrt{z}),\] then it suffices to show that $\Psi(z)\prec T(z),$ for each $z\in\mathbb{D}$ or $\mathbb{D}\subset \Psi^{-1}(T(\mathbb{D}))$ or equivalently,  
	\[|\Psi^{-1}(T(e^{i t}))|\geq 1 \quad (-\pi\leq t\leq \pi).\]
  Since $|\tanh(e^{i t/2})|$
	attains its minimum at $t=0$ and  for $|\eta|\geq 2|A-B|/(1-|B|)\tanh 1,$ on the boundary of $\mathbb{D},$ we have  
	\begin{align*}
		|\Psi^{-1}(T(e^{i t}))| &
		\geq \frac{|\eta||\tanh(e^{i t/2})|}{2 |A-B| + |\eta B \tanh(e^{i t/2})|}\\&
  \geq \frac{|\eta|\tanh 1}{2 |A-B| + |\eta B| \tanh 1}\\&
  \geq 1.
	\end{align*}
Now the result follows at once by an application of  Lemma \ref{l7}.      
\end{proof}

\noindent By taking $p(z)=zf'(z)/f(z)$ in Theorem \ref{t11}, we obtain the following corollary.
\begin{corollary}
	Assume $\eta,$ $A$ and $B$ as given in Theorem \ref{t11}.  Let $f\in\mathcal{A}$ such that 
	\[1+\eta \left(1+\frac{z f''(z)}{f'(z)} - \frac{zf'(z)}{f(z)} \right) \prec \frac{1+A z}{1 + B z},\] then $f\in\mathcal{S}^{*}_{\varrho}.$
\end{corollary}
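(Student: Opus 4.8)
The plan is to apply Lemma~\ref{l7} (the Miller--Mocanu lemma quoted above) with the natural choices $q(z)=\cosh\sqrt{z}=\varrho(z)$, $\nu(w)\equiv 1$, and $\varphi(w)=\eta/w$, exactly as in the proof of Theorem~\ref{t11}. With these choices $\mathcal{Q}(z)=zq'(z)\varphi(q(z))=\eta\, zq'(z)/q(z)=\tfrac{\eta}{2}\sqrt{z}\tanh\sqrt{z}$, and $h(z)=1+\mathcal{Q}(z)$. First I would verify the hypotheses of Lemma~\ref{l7}: that $\mathcal{Q}$ is starlike univalent in $\mathbb{D}$ (via $\operatorname{Re}\bigl(z\mathcal{Q}'(z)/\mathcal{Q}(z)\bigr)>0$, which follows from the corresponding computation with $\sqrt z\tanh\sqrt z$ already used in Theorem~\ref{t11}), and that $\operatorname{Re}\bigl(zh'(z)/\mathcal{Q}(z)\bigr)=\tfrac12+\operatorname{Re}\bigl(\sqrt z\,\operatorname{csch}2\sqrt z\bigr)>\tfrac12+\operatorname{csch}2>0$ on $\mathbb{D}$. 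Since $\varphi(w)=\eta/w$ is analytic and nonvanishing on $q(\mathbb{D})$ (note $0\notin\varrho(\mathbb{D})$), and $p(0)=1=q(0)$, the lemma then reduces the problem to showing the subordination
\[
1+\eta\frac{zp'(z)}{p(z)}\ \prec\ 1+\frac{\eta}{2}\sqrt{z}\tanh\sqrt{z}=\nu(q(z))+zq'(z)\varphi(q(z)),
\]
from which $p\prec q=\cosh\sqrt z$ with $q$ the best dominant.

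Next I would reduce this to comparing the given dominant $\Psi(z)=(1+Az)/(1+Bz)$ with $T(z):=1+\tfrac{\eta}{2}\sqrt z\tanh\sqrt z$. Since $T$ is (by the first step) the image of $\mathbb{D}$ under a function for which the lemma applies, and $\Psi$ is the hypothesized majorant, it suffices to prove $\Psi\prec T$, equivalently $\Psi(\mathbb{D})\subset T(\mathbb{D})$, equivalently $\mathbb{D}\subset \Psi^{-1}(T(\mathbb{D}))$. Using the explicit inverse $\Psi^{-1}(w)=(w-1)/(A-Bw)$ (valid since $A\neq B$ forces $\Psi$ non-constant), this amounts to the boundary estimate
\[
|\Psi^{-1}(T(e^{it}))|=\left|\frac{T(e^{it})-1}{A-B\,T(e^{it})}\right|=\frac{|\eta|\,|e^{it/2}\tanh e^{it/2}|/2}{|A-B-\tfrac{\eta B}{2}e^{it/2}\tanh e^{it/2}|}\ \geq\ 1
\]
for all $t\in[-\pi,\pi]$, which I would bound below by $\dfrac{|\eta|\,|\tanh e^{it/2}|/2}{|A-B|+|\eta B|\,|\tanh e^{it/2}|/2}$ after using $|e^{it/2}|=1$ in the numerator-controlling factor and the triangle inequality in the denominator.

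The main obstacle — though it is a mild one, since Theorem~\ref{t11} handled an almost identical expression — is establishing that $|\tanh(e^{it/2})|$ attains its minimum over $t\in[-\pi,\pi]$ at $t=0$, with minimum value $\tanh 1$. Granting this, the last displayed quantity is increasing in $|\tanh e^{it/2}|\in[\tanh 1,\,(\text{something})]$, so it is minimized at $t=0$, giving the lower bound $\dfrac{|\eta|\tanh 1/2}{|A-B|+|\eta B|\tanh 1/2}$; this is $\geq 1$ precisely when $|\eta|\tanh 1/2\geq |A-B|+|\eta B|\tanh 1/2$, i.e. $|\eta|(1-|B|)\tanh 1\geq 2|A-B|$, which is exactly the hypothesis $|\eta|\geq 2|A-B|/((1-|B|)\tanh 1)$ (here $|B|<1$ guarantees the right-hand side is finite and the manipulation is valid). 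Thus $\Psi\prec T$, the chain of implications closes, and $p\prec\cosh\sqrt z$ with $\cosh\sqrt z$ the best dominant, completing the proof. I would also double-check the harmless edge cases ($A=B$ is excluded; $B=0$ degenerates the denominator estimate but the inequality persists) and remark that the monotonicity claim for $|\tanh(e^{it/2})|$ can be seen from $|\tanh z|^2=(\sinh^2 u+\sin^2 v)/(\cosh^2 u-\sin^2 v)$ with $z=u+iv$ and $e^{it/2}=\cos(t/2)+i\sin(t/2)$, reducing to a one-variable calculus check on $[0,\pi]$.
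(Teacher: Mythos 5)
Your argument is correct and is essentially the paper's: the paper obtains this corollary simply by setting $p(z)=zf'(z)/f(z)$ in Theorem \ref{t11}, and your proposal reproduces that theorem's proof verbatim (Lemma \ref{l7} with $q(z)=\cosh\sqrt{z}$, $\nu\equiv 1$, $\varphi(w)=\eta/w$, then $\Psi\prec T$ via the boundary estimate using $|\tanh e^{it/2}|\geq\tanh 1$). The only step worth making explicit is the substitution itself, namely that $zp'(z)/p(z)=1+zf''(z)/f'(z)-zf'(z)/f(z)$, which converts the corollary's hypothesis on $f$ into the hypothesis of Theorem \ref{t11}.
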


In the next result, we establish sharp lower bound on $\eta,$ for which the following implication holds:
\[1+\eta z p'(z) \prec \cosh \sqrt{z} \Rightarrow p(z)\prec \phi(z),\]
where we choose $\phi(z)$ to be any of $\phi_{e}(z),\phi_{L}(z),\phi_{\rightmoon}(z),\phi_{A,B}(z)$ and $\phi_{s}(z)$.

\begin{theorem}\label{t12}
	Let $\eta\in\mathbb{R}$ and $p(z)$ be analytic in $\mathbb{D}$ satisfying $1+\eta z p'(z)\prec \cosh \sqrt{z},$             	
 then 
	\begin{enumerate}[(i)]
		\item $p(z)\prec e^{z}$ and $(e-1)\eta \geq 2 e (\xi - Ci(1)).$ 	
  \item $p(z)\prec \sqrt{1+z}$  and $(\sqrt{2}-1)\eta \geq 2(Ci(1)-\xi).$
		\item $p(z)\prec z+\sqrt{1+z^{2}}$ and $(\sqrt{2}-1)\eta \geq \sqrt{2}(\xi - Ci(1)).$ 
		\item $p(z) \prec (1+A z)/(1+B z)$ and $\eta \geq \eta_{A,B},$ where  
		
		\begin{align}\label{e52}
			\eta_{A,B}& =  
			\begin{cases} 
				\dfrac{2}{A-B}(1-B)(\xi - Ci(1))& \text{if } -1 < B \leq -\kappa,  \\ & \\
				\dfrac{-2}{A-B}(1+B)(\xi - Chi(1)) & \text{if }   -\kappa < B < 1
			\end{cases}
		\end{align}
		and $-1<B < A \leq 1.$ 
		
		\item $p(z)\prec (1+ s z)^{2}$ and $\eta\geq \eta_{s},$ where 
		\begin{align}\label{e54}
			\eta_{s}& =  
			\begin{cases} 
				\dfrac{2}{s(s+2)}( Chi(1) - \xi ) & \text{if } 0 < s \leq 2\kappa,  \\ & \\
				\dfrac{2}{s(s-2)}( Ci(1) - \xi ) & \text{if }   2 \kappa < s \leq 1/\sqrt{2}.
			\end{cases}
		\end{align}
		
	\end{enumerate} 
 where
\begin{equation}\label{e47}
\kappa:=\frac{Chi(1) + Ci(1)- 2\xi}{Chi(1)-Ci(1)}, 
\end{equation} 
	with \begin{equation}\label{e48}
 Chi(1) = \xi +\int_{0}^{1}(\cosh \sqrt{t} -1)/t  \hskip 0.2cm dt \quad \text{and} \quad Ci(1) = \xi +\int_{0}^{1}(\cos \sqrt{t} -1)/t \hskip 0.2cm dt. 
 \end{equation}
All the bounds on $\eta$ are sharp.
\end{theorem}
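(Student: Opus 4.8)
The plan is to apply Miller--Mocanu's Lemma~\ref{l7} with the roles of $p$ and $q$ reversed from the usual set-up: here we know the subordination for $1+\eta z p'(z)$ against $\cosh\sqrt{z}$, so we take $q(z)=\cosh\sqrt z$ playing the part of the \emph{dominant of the data}, and we must show that for each target $\phi$ the associated function $1+\eta z\phi'(z)$ dominates $\cosh\sqrt z$; then the monotonicity/subordination chain forces $p\prec\phi$. Concretely, choose $\nu(w)\equiv 1$ and $\varphi(w)=\eta$, so that $\mathcal{Q}(z)=\eta z\phi'(z)$ and $h_\phi(z)=1+\eta z\phi'(z)$. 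For each of the five choices of $\phi$ one checks the admissibility hypotheses of Lemma~\ref{l7} (that $\mathcal Q$ is starlike, or $h_\phi$ convex, and $\operatorname{Re}(zh_\phi'/\mathcal Q)>0$), which hold because each $\phi$ is a standard Ma--Minda function with well-understood geometry. The subordination $\cosh\sqrt z\prec 1+\eta z\phi'(z)$ is then equivalent, since both sides agree at $z=0$ and the right side is univalent, to the inclusion $\cosh\sqrt{\mathbb D}\subset (1+\eta z\phi'(z))(\mathbb D)$, i.e.\ to $|\,(1+\eta z\phi'(z))^{-1}(\cosh\sqrt{z_0})\,|\ge 1$ on $|z_0|=1$.

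The heart of the matter is the elementary-but-delicate extremal computation that produces the stated sharp bounds. Writing $\cosh\sqrt z=1+\sum_{n\ge1}z^n/(2n)!$, the integral $\int_0^z(\cosh\sqrt t-1)/t\,dt$ telescopes the Briot--Bouquet/first-order data into closed form, which is exactly why the constants $Chi(1)=\xi+\int_0^1(\cosh\sqrt t-1)/t\,dt$ and $Ci(1)=\xi+\int_0^1(\cos\sqrt t-1)/t\,dt$ appear in \eqref{e48}; the quantity $\xi$ is the Euler--Mascheroni constant coming from the logarithmic part. I would first record that $1+\eta z p'(z)\prec\cosh\sqrt z$ together with $p(0)=1$ gives, by integrating the Schwarz function representation along rays, the explicit bound $p(z)-1 = \tfrac1\eta\int_0^z(\cosh\sqrt{w(t)}-1)/t\,dt$, whence $p$ maps $\mathbb D$ into a disc (or the relevant region) whose radius is governed by $\max_{|z|=1}\bigl|\tfrac1\eta\int_0^z(\cosh\sqrt t-1)/t\,dt\bigr|$ and by the corresponding integral with $\cos\sqrt t$; the two competing values $Chi(1)$ and $Ci(1)$ arise as the images of the boundary points $z=1$ and $z=-1$, and the threshold $\kappa$ in \eqref{e47} is precisely where these two contributions balance. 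Then for each $\phi$ one compares that radius against the radius of the largest disc centred at $1$ contained in $\phi(\mathbb D)$ — $e-1$ for $\phi_e$ after a logarithmic normalization, $\sqrt2-1$ for $\phi_L$, the crescent bound $\sqrt2$ for $\phi_\rightmoon$ via \eqref{crescentregion}, $|A-B|/(1-|B|)$-type quantities for $\phi_{A,B}$, and $s(s+2)$ versus $s(2-s)$ for $\phi_s$ via \eqref{e66} — and the inequality $\eta\ge\eta_\phi$ drops out by rearrangement. The case splits in \eqref{e52} and \eqref{e54} (at $B=-\kappa$ and $s=2\kappa$ respectively) reflect which of the two boundary images $Chi(1)$ or $Ci(1)$ is the binding constraint.

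Finally, sharpness: for each part I would exhibit the extremal function $p_0$ defined by $1+\eta z p_0'(z)=\cosh\sqrt z$, i.e.\ $p_0(z)=1+\tfrac1\eta\int_0^z(\cosh\sqrt t-1)/t\,dt$, and verify that at the critical value of $\eta$ the point $p_0$ reaches the boundary of $\phi(\mathbb D)$ at $z=\pm1$ (on the real axis), so no smaller $\eta$ can work. This also explains why all five bounds are simultaneously sharp with the \emph{same} extremal $p_0$.

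The main obstacle I anticipate is purely computational stamina rather than conceptual difficulty: one must carefully evaluate the real and imaginary parts of $\int_0^{e^{i\theta}}(\cosh\sqrt t-1)/t\,dt$ (equivalently identify it with the combination of $Chi$ and $Ci$ that appears), determine where its modulus is maximized on the unit circle — showing it is at $\theta=0$ or $\theta=\pi$ and not at an interior critical point — and then match the resulting constants against the geometry of each $\phi(\mathbb D)$, being especially careful with the crescent domain $\Omega_\rightmoon$, where (as the paper notes after Figure~\ref{fig9}) one only needs $x\notin\Delta_i$ for one $i$ to conclude $x\notin\Omega_\rightmoon$, so the relevant obstruction is the single circle $|w-1|=\sqrt2$. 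Verifying the Lemma~\ref{l7} admissibility conditions for $\phi_s$ and $\phi_{A,B}$ (convexity of $h_\phi$ or starlikeness of $\mathcal Q$) will also require a short separate argument in each case, but these are routine given the explicit forms.
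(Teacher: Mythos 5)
Your central step cannot work at the stated sharp constants. You propose to get $p\prec\phi$ from Lemma \ref{l7} with $q=\phi$, $\nu\equiv1$, $\varphi\equiv\eta$, after first proving $\cosh\sqrt{z}\prec 1+\eta z\phi'(z)$; but that auxiliary subordination is strictly stronger than the theorem and is actually false at the claimed bounds. Take part (i), $\phi(z)=e^{z}$, so you would need $\Omega_{\varrho}\subset\{1+\eta ze^{z}:z\in\mathbb{D}\}$. For $z=re^{i\psi}\in\mathbb{D}$ one has $\arg(ze^{z})=\psi+r\sin\psi\in(-\pi,\pi)$, vanishing or reaching $\pm\pi$ only for real $z$; hence $ze^{z}$ is real exactly on $(-1,1)$, where it increases from $-e^{-1}$ to $e$, and the real values of $1+\eta ze^{z}$ on $\mathbb{D}$ fill precisely $(1-\eta/e,\,1+\eta e)$. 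Since $\Omega_{\varrho}=\cosh\sqrt{\mathbb{D}}$ contains real points arbitrarily close to $\cos 1\approx0.540$, the inclusion forces $1-\eta/e\le\cos 1$, i.e. $\eta\ge e(1-\cos 1)\approx1.249$, whereas the theorem asserts sharpness at $\eta_{e}\approx0.759$. So at $\eta=\eta_{e}$ your chain $1+\eta zp'\prec\cosh\sqrt{z}\prec1+\eta z\phi'$ breaks at the second link, and this route can only yield non-sharp bounds. Your fallback estimate in the second paragraph has the same defect: bounding $|p(z)-1|$ by $\tfrac1\eta\int_{0}^{1}(\cosh\sqrt{t}-1)/t\,dt$ and comparing with the largest disc centred at $1$ inside $\phi(\mathbb{D})$ gives, for $e^{z}$, a disc of radius at most $1-e^{-1}$ (not $e-1$ as you wrote), hence $\eta\ge 2(Chi(1)-\xi)/(1-e^{-1})\approx0.82>\eta_{e}$; moreover a symmetric disc estimate cannot produce the asymmetric two-sided constraints and case splits in \eqref{e52} and \eqref{e54}.

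The paper's route is different and avoids both problems: it solves $1+\eta z\phi_{\eta}'(z)=\cosh\sqrt{z}$ explicitly, $\phi_{\eta}(z)=1+\tfrac1\eta(2\,Chi\sqrt{z}-\log z-2\xi)$, and applies Lemma \ref{l7} with $q=\phi_{\eta}$ (so $\mathcal{Q}(z)=\eta z\phi_{\eta}'(z)=\cosh\sqrt{z}-1$ is starlike and $\operatorname{Re}(zh'/\mathcal{Q})>0$), which gives $p\prec\phi_{\eta}$ for every admissible $\eta$ with no size restriction; the restrictions on $\eta$ enter only in the second, much weaker step $\phi_{\eta}\prec\phi$, reduced to the endpoint inequalities $\phi(-1)\le\phi_{\eta}(-1)<\phi_{\eta}(1)\le\phi(1)$, i.e. comparing $1+2(Ci(1)-\xi)/\eta$ and $1+2(Chi(1)-\xi)/\eta$ with $\phi(\mp1)$. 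These produce exactly the constants in (i)--(v), with $\kappa$ marking which endpoint binds -- so your intuition about $z=\pm1$ and the threshold $\kappa$ is right, but it must be attached to the subordination $\phi_{\eta}\prec\phi$, not to a disc estimate or to $\cosh\sqrt{z}\prec1+\eta z\phi'$. Your sharpness argument via the extremal $p_{0}=\phi_{\eta}$ is the correct one and matches the paper. To repair the proof you should replace the reversed application of Lemma \ref{l7} by the one above, and then actually justify that the endpoint conditions imply $\phi_{\eta}(\mathbb{D})\subset\phi(\mathbb{D})$ (a point the paper itself settles only by graphical observation).
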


\begin{proof}
The differential equation $1+\eta z \phi_{\eta}'(z)= \cosh\sqrt{z},$ has a solution $\phi_{\eta}(z):\overline{\mathbb{D}} \rightarrow \mathbb{C},$ given by   
		\[\phi_{\eta}(z)=1+\frac{1}{\eta }\left(2 Chi\sqrt{z} - \log z - 2 \xi \right),\] where $\xi \approx 0.577216$ is the Euler–Mascheroni constant.
	 Let $\nu (w)=1$ and $\varphi(w)=\eta.$ Define an analytic function $\mathcal{Q}:\overline{\mathbb{D}}\rightarrow \mathbb{C}$ by 
	\[\mathcal{Q}(z)=z \phi'_{\eta}(z)\varphi(\phi_{\eta}(z))=-1+\cosh\sqrt{z}. \]
 
	Since $\operatorname{Re} z\mathcal{Q}'(z)/\mathcal{Q}(z)>(1/2)\cot (1/2)>0,$  thus $\mathcal{Q}(z)$ is starlike in $\mathbb{D}.$ Let $h(z)=\nu(\phi_{\eta}(z))+\mathcal{Q}(z),$ then we have
	\[\operatorname{Re}\left(\frac{zh'(z)}{\mathcal{Q}(z)}\right)=\operatorname{Re}\left(\frac{1}{2} \sqrt{z} \coth \left(\frac{\sqrt{z}}{2}\right)\right)>0.\] 
By an application of Lemma \ref{l7}, we obtain $1+\eta z p'(z)\prec 1+\eta z \phi_{\eta}'(z),$ which implies $p(z)\prec \phi_{\eta}(z).$ Now we need to show that $\phi_{\eta}(z)\prec \phi(z),$ where $\phi(z)$ is any of these functions: $\phi_{e}(z),$ $\phi_{L}(z)$,$\phi_{\rightmoon}(z),$ $\phi_{A,B}(z)$ and $\phi_{s}(z).$ 
If $\phi_{\eta}(z)\prec \phi(z),$ then 
\begin{equation}\label{e53}
		\phi(-1) \leq \phi_{\eta}(-1)<\phi_{\eta}(1) \leq \phi(1).
  \end{equation}
Now, for each choice of $\phi(z)$, by solving equation \eqref{e53}, we obtain sharp bounds on $\eta$. The graphical observations presented in Figure \ref{smallsubdn} demonstrate that the condition in \eqref{e53} is not only necessary but also sufficient for the chosen choice of $\phi(z)$.

\begin{figure}[!ht]
    \centering
\subfloat[\centering]{\includegraphics[height=1.55in,width=1.65in]{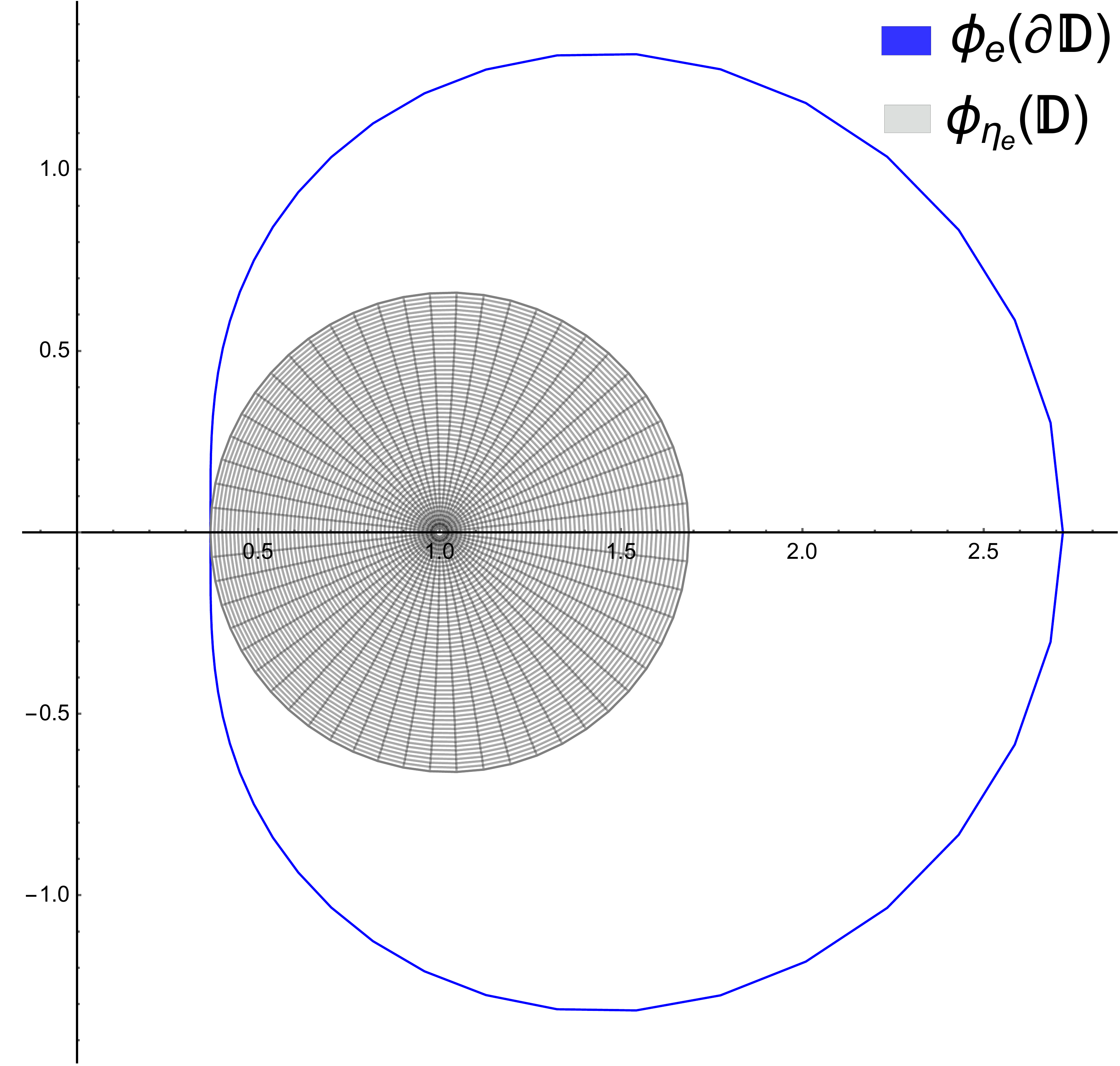}}  \qquad \qquad \qquad
\subfloat[\centering]{\includegraphics[height=1.55in,width=1.45in]{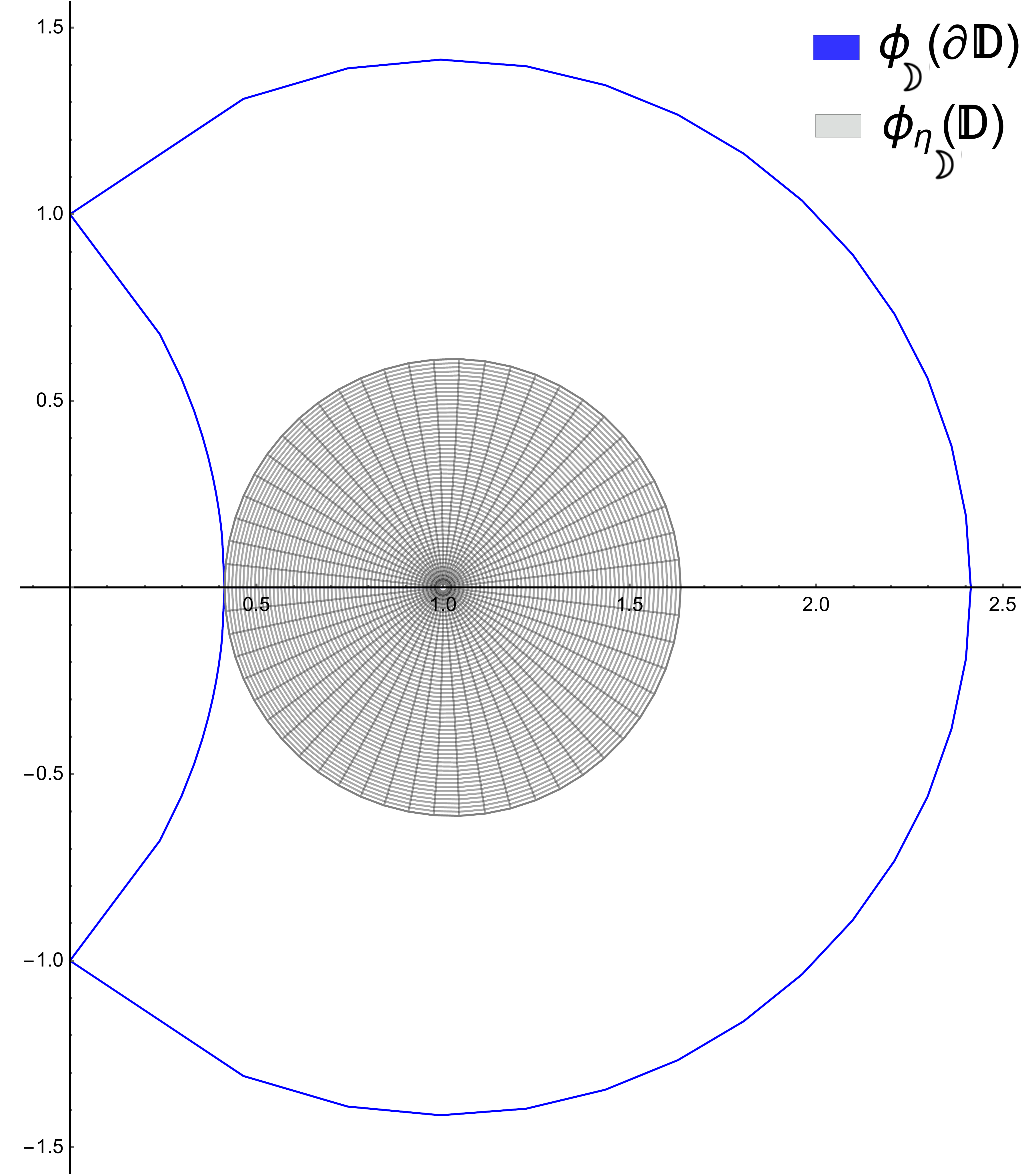}}  \qquad \qquad \qquad 
\subfloat[\centering]
{\includegraphics[height=1.25in,width=1.75in]{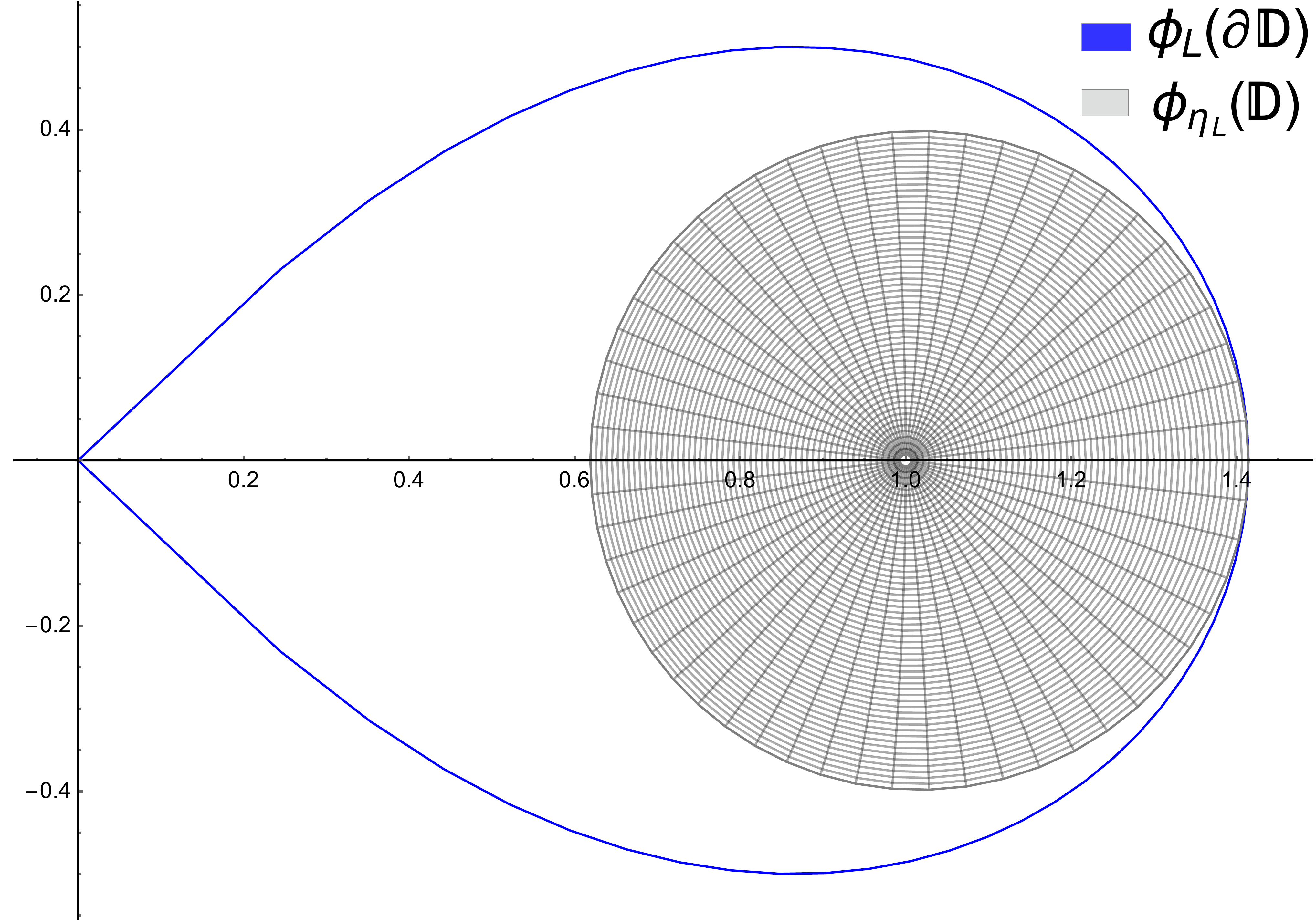}}  \qquad \qquad \qquad
\subfloat[\centering]{\includegraphics[height=1.8in,width=1.8in]{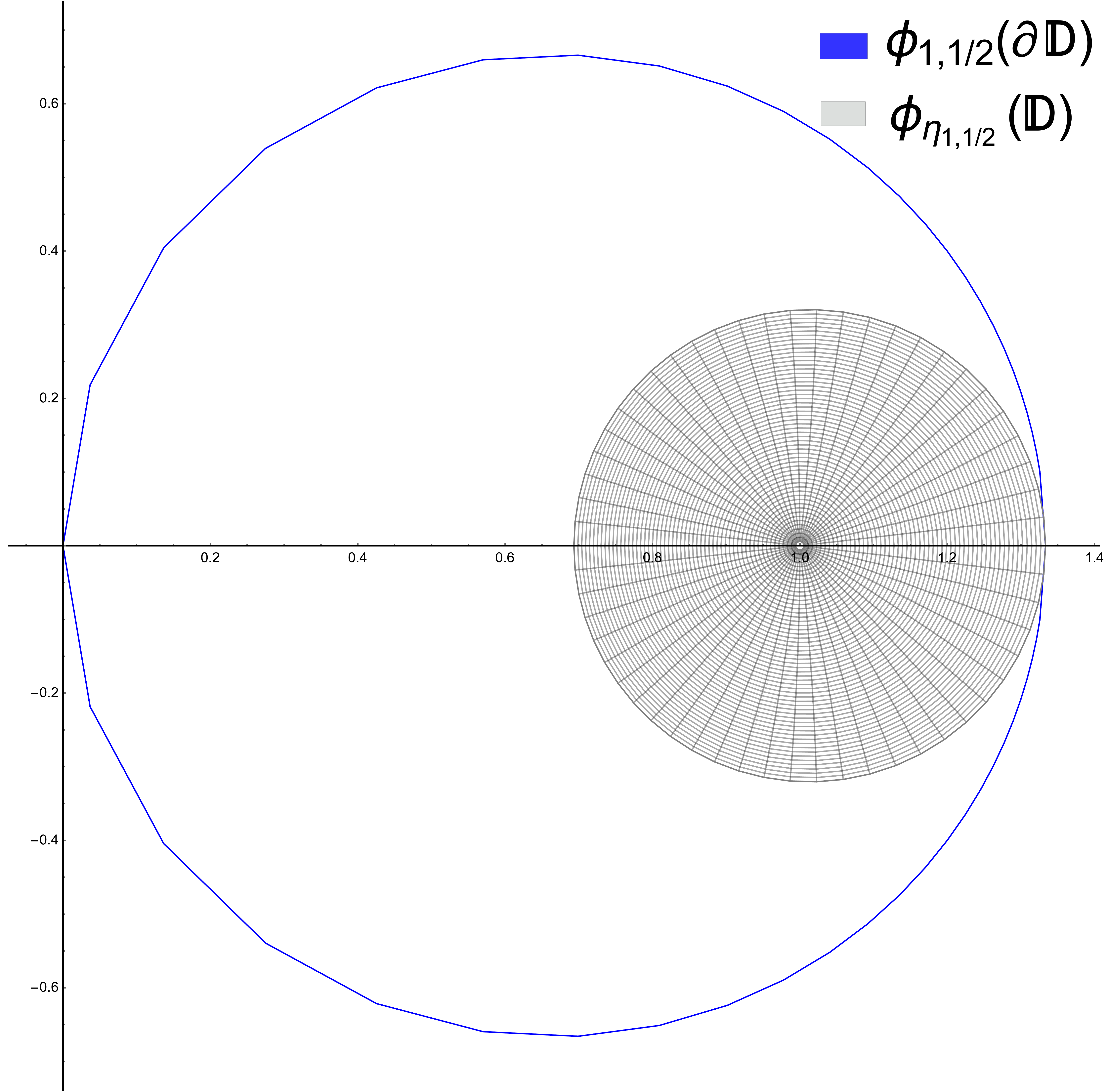}} \qquad \qquad 
\subfloat[\centering]{\includegraphics[height=1.7in,width=1.5in]{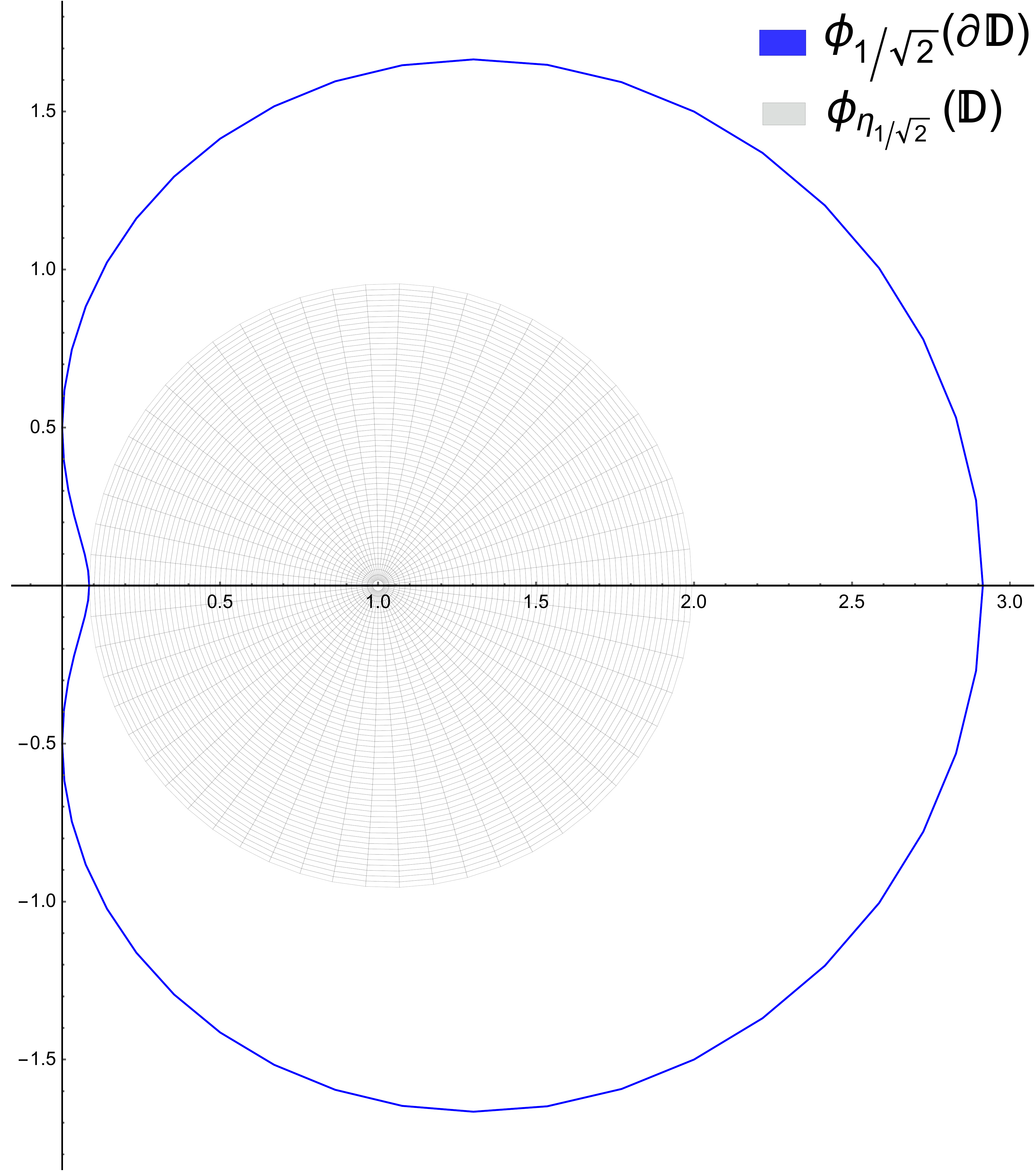}}
\caption{Images of $\partial\mathbb{D}$  and $\mathbb{D}$ under the mappings,  (A) $\phi_{e}(z)$ and $\phi_{\eta_{e}}(z),$  (B) $\phi_{\rightmoon}(z)$ and $\phi_{\eta_{\rightmoon}}(z),$ (C) $\phi_{L}(z)$ and $\phi_{\eta_{L}}(z),$   (D) $\phi_{1,1/2}(z)$ and $\phi_{\eta_{1,1/2}}(z)$ \& (E)  $\phi_{1/\sqrt{2}}(z)$ and $\phi_{\eta_{1/\sqrt{2}}}(z),$ respectively.}
\label{smallsubdn}
\end{figure}

\begin{enumerate}[(i)]		
\item When $\phi(z)=\phi_{e}(z),$ then \eqref{e53} reduces to the following  
		\begin{equation}\label{e50}
			e^{-1} \leq 1 + \frac{2(Ci(1) - \xi)}{\eta} < 1 + \frac{2(Chi(1) - \xi)}{\eta} \leq e.	
		\end{equation}
  Now from \eqref{e50} we get 
  \[e^{-1}\leq 1 + \frac{2(Ci(1) - \xi)}{\eta} ,\text{ which implies } \eta \geq \frac{2 e (\xi - Ci(1))}{e-1}=\eta_{e}\approx 0.758753\]
  and \[1 + \frac{2(Chi(1) - \xi)}{\eta} \leq e,\text{ which implies } \eta \geq \frac{2 (Chi(1)-\xi)}{e-1}=x_{e}\approx 0.303386.\]
  It can be observed that $\eta \geq \max\{\eta_{e},x_{e}\}.$ Therefore, $\phi_{\eta}(z)\prec \phi(z)$ for each $\eta\geq \eta_{e},$ consequently, due to transitivity the result follows at once.

\item If $\phi(z)=\phi_{L}(z),$ then inequalities \eqref{e53} leads to  \[0\leq \frac{2( Ci(1)- \xi)}{\eta }+1<\frac{2 (Chi(1)-\xi)}{\eta }+1\leq \sqrt{2},\] which is true, provided $\eta\geq \eta_{L},$ where \[\eta_{L}=\frac{\sqrt{2}(\xi - Ci(1))}{\sqrt{2}-1}\approx 1.25854.\] Thus  for each $\eta\geq \eta_{L},$ we have $p(z)\prec \phi_{L}(z).$
\item For $\phi(z)=\phi_{\rightmoon}(z),$ the expression in \eqref{e53}, gives 
	\begin{equation}\label{e51}
			\sqrt{2} -1 \leq 1 + \frac{2(Ci(1) - \xi)}{\eta} < 1 + \frac{2(Chi (1) - \xi)}{\eta} \leq \sqrt{2}+1,	
		\end{equation}   
		which holds for each $\eta \geq \eta_{\rightmoon},$ where  \[\eta_{\rightmoon}= \frac{\sqrt{2}(\xi - Ci(1))}{ \sqrt{2}-1}\approx 0.818769.\]
		Therefore, for each $\eta \geq \eta_{\rightmoon},$ we have $\phi_{\eta}(z)\prec \phi_{\rightmoon}(z).$ Accordingly we conclude that $p(z)\prec \phi_{\rightmoon}(z).$
  \end{enumerate}
The cases (iv) and (v) will follow similar to the above cases.  Sharpness of the result is depicted in Figure \ref{smallsubdn}, for the above choices of $\phi(z).$ We 
choose $A=1,B=1/2$ and $s=1/\sqrt{2},$ in parts (iv) and (v), respectively for graphical representation of the case, consequently, we obtain  $\eta\geq \eta_{1/\sqrt{2}}\approx 0.52463$ and $\eta\geq \eta_{1,1/2}\approx 1.56391,$ respectively, from \eqref{e52} and \eqref{e54}.\end{proof}

\begin{corollary}
Let $\kappa,$ $Chi(1)$ and $Ci(1)$  be as given in Theorem \eqref{t12}. For $f\in\mathcal{A},$ if
\[\Phi_{\eta}(z):=1+\eta  \frac{z f'(z)}{f(z)}\left(1-\frac{z f'(z)}{f(z)}+\frac{zf''(z)}{f'(z)}\right),\] then 
$f\in\mathcal{S}^{*}_{\varrho}$ implies the following 
\begin{enumerate}[(i)]
\item $\Phi_{\eta}(z)\prec e^{z},$ whenever $(e-1)\eta\geq 2 e(\xi - Ci(1)).$
\item $\Phi_{\eta}(z)\prec \sqrt{1+z},$ whenever $(\sqrt{2}-1)\eta \geq 2 (Ci(1)-\xi).$
\item $\Phi_{\eta}(z)\prec z+\sqrt{1+z^{2}},$ whenever $(\sqrt{2}-1)\eta \geq \sqrt{2}(\xi - Ci(1)).$
\item $\Phi_{\eta}(z)\prec (1+Az)/(1+Bz),$ whenever $\eta \geq \eta_{A,B},$ where $-1<B<A\leq 1$ and $\eta_{A,B}$ is given in  Theorem \ref{t12}.
\item $\Phi_{\eta}(z)\prec (1+sz)^{2},$ whenever $\eta \geq \eta_{s},$ where $\eta_{s}$ is given in Theorem \ref{t12}.
\end{enumerate}
All the bounds obtained on $\eta$ are sharp.
\end{corollary}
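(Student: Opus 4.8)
The plan is to read the corollary off Theorem \ref{t12} after the substitution $p(z)=zf'(z)/f(z)$. Differentiating $\log p(z)=\log z+\log f'(z)-\log f(z)$ yields the logarithmic-derivative identity
\[
\frac{zp'(z)}{p(z)}=1-\frac{zf'(z)}{f(z)}+\frac{zf''(z)}{f'(z)},
\]
so multiplying through by $\eta p(z)=\eta\,zf'(z)/f(z)$ and adding $1$ shows that the displayed expression is precisely
\[
\Phi_\eta(z)=1+\eta\,zp'(z).
\]
Moreover, $f\in\mathcal S^{*}_{\varrho}$ is by definition the statement $p(z)\prec\cosh\sqrt z$ with $p(0)=1$. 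Thus each of the five parts (i)--(v) is an assertion about the first-order operator $1+\eta zp'$ acting on the logarithmic derivative $p=zf'/f$, with exactly the dominants $\phi_e,\phi_L,\phi_{\rightmoon},\phi_{A,B},\phi_s$ and exactly the bounds $\eta_e,\eta_L,\eta_{\rightmoon},\eta_{A,B},\eta_s$ already produced in Theorem \ref{t12}.

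The mechanism I would invoke is the one behind Theorem \ref{t12}: recall the solution $\phi_\eta(z)=1+\eta^{-1}\!\left(2\,Chi\sqrt z-\log z-2\xi\right)$ of the equation $1+\eta z\phi_\eta'(z)=\cosh\sqrt z$, and recall that, under each bound on $\eta$ recorded there, the boundary chain
\[
\phi(-1)\le\phi_\eta(-1)<\phi_\eta(1)\le\phi(1)
\]
holds and delivers $\phi_\eta\prec\phi$ for the relevant dominant. The Miller--Mocanu lemma (Lemma \ref{l7}), taken with $\nu\equiv 1$, $\varphi\equiv\eta$ and $q=\phi_\eta$, is the device that couples the operator $1+\eta z(\cdot)'$ to its argument. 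Once the coupling is available in the required form, simplifying the boundary inequalities for each choice of $\phi$ reproduces the explicit constants $\eta_e,\eta_L,\eta_{\rightmoon},\eta_{A,B},\eta_s$, and the sharpness is witnessed by the extremal function $\phi_\eta$ exactly as illustrated in Figure \ref{smallsubdn}; no fresh extremal computation is needed.

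The step I expect to be the main obstacle is the direction in which the subordination is transported. Theorem \ref{t12} is phrased as ``$1+\eta zp'\prec\cosh\sqrt z\ \Rightarrow\ p\prec\phi$'', passing from a bound on the operator to a bound on its argument, whereas the corollary places the hypothesis $p\prec\cosh\sqrt z$ on the argument and asks for the conclusion $\Phi_\eta=1+\eta zp'\prec\phi$ on the operator. Since a subordination $p\prec\cosh\sqrt z$ does not by itself control $zp'$ pointwise, the transport cannot be obtained by quoting Theorem \ref{t12} verbatim; I would therefore devote the core of the write-up to justifying this reversed implication, leaning on the convexity of each dominant $\phi$, on the best-dominant property of $\phi_\eta$ furnished by Lemma \ref{l7}, and on the sharp boundary inequalities above, and only afterward record that the remaining algebra (the splitting at $s\le 2\kappa$ versus $s>2\kappa$, and at $B\le-\kappa$ versus $B>-\kappa$, with $\kappa$ as in \eqref{e47}) is identical to that of Theorem \ref{t12}.
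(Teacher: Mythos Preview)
Your substitution and the identity $\Phi_\eta=1+\eta zp'$ with $p=zf'/f$ are correct, and you have put your finger on the real issue: the corollary as printed asks for $p\prec\cosh\sqrt z\Rightarrow 1+\eta zp'\prec\phi$, which is the \emph{converse} of what Theorem~\ref{t12} proves. The paper gives no proof for this corollary; its placement immediately after Theorem~\ref{t12} shows that the intended argument is the one-line substitution $p=zf'/f$, which yields only the implication in the theorem's direction, namely $\Phi_\eta\prec\cosh\sqrt z\Rightarrow zf'/f\prec\phi$ (equivalently $f\in\mathcal S^{*}(\phi)$). So the discrepancy you found is a defect in the stated corollary, not a gap for you to bridge: the paper's ``proof'' is exactly your first paragraph, and it does not, and cannot, establish the reversed implication.

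Your proposed rescue of the printed direction via Lemma~\ref{l7}, convexity of $\phi$, and the best-dominant role of $\phi_\eta$ would not go through. Lemma~\ref{l7} transports a bound on $\nu(p)+zp'\varphi(p)$ to a bound on $p$, never the other way, and a subordination $p\prec q$ carries no control on $zp'$: for any Schwarz function $w$ one has $p=\cosh\sqrt{w}\prec\cosh\sqrt z$, while $zp'(z)=\tfrac12\sqrt{w(z)}\,\sinh\sqrt{w(z)}\cdot zw'(z)/w(z)$ can be made as large as one likes near $\partial\mathbb D$. Worse, the bounds are \emph{lower} bounds $\eta\ge\eta_\bullet$, so increasing $\eta$ only dilates the image of $1+\eta zp'$ and makes $1+\eta zp'\prec\phi$ harder, not easier. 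The correct write-up is therefore simply to record the substitution and restate the conclusion in the direction Theorem~\ref{t12} actually supports.
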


\begin{theorem}\label{t13}
	Let \( -1 < B < A \leq 1 \), \(\mu = (\cosh 1 - 1)/(1 - \cos 1)\), and \( B_{0} \) be the root of the equation \((1 + B)(1 - B)^{\mu} = 1\). If \( p(z) \) is an analytic function in \(\mathbb{D}\) that satisfies \( 1 + \eta z p'(z) \prec (1 + Az)/(1 + Bz) \), then \( p(z) \prec \cosh \sqrt{z} \),
 provided $\eta$ satisfies the following sharp inequalities
\begin{align}\label{e55}
		& 
  \left.
		\begin{array}{lll}
			\eta  B (\cosh 1 - 1) \leq (A - B) \log (1 + B)  & \text{ if }  -1 < B \leq B_{0}, \\ 
			\eta B (\cos 1 - 1)  \geq (A - B) \log (1 - B)  & \text{ if }   B_{0} < B < 0,\\
			\eta B (\cos 1 - 1)  \leq (A - B) \log (1 - B)   & \text{ if }  0 < B < 1, \\
   2\eta \geq A \csc^{2}(1/2) &  \text{ if } B=0.
\end{array}
  \right\}
	\end{align} 
\end{theorem}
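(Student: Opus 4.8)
The plan is to mimic the proof of Theorem \ref{t12}: first reduce the differential subordination $1+\eta z p'(z)\prec (1+Az)/(1+Bz)$ to an explicit subordination $p(z)\prec \phi_\eta(z)$, where $\phi_\eta$ solves the associated Briot--Bouquet-type differential equation, and then identify the sharp range of $\eta$ for which $\phi_\eta(z)\prec \cosh\sqrt{z}$. For the first step I would set $\nu(w)=1$ and $\varphi(w)=\eta$, solve $1+\eta z\phi_\eta'(z)=(1+Az)/(1+Bz)$ explicitly to get
\[
\phi_\eta(z)=1+\frac{1}{\eta}\left(A z\Big/\ \cdots\ \right)=1+\frac{A-B}{\eta B}\log(1+Bz)+\frac{A}{\eta}\Big(\text{term when }B=0\Big),
\]
i.e. $\phi_\eta(z)=1+\frac{1}{\eta}\big(\frac{A-B}{B}\log(1+Bz)\big)$ for $B\neq0$ and $\phi_\eta(z)=1+\frac{A}{\eta}z$ for $B=0$. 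One checks that $\mathcal{Q}(z)=z\phi_\eta'(z)\varphi(\phi_\eta(z))=(1+Az)/(1+Bz)-1=(A-B)z/(1+Bz)$ is starlike univalent in $\mathbb{D}$ (its logarithmic derivative has positive real part since $|B|<1$), and that $h(z)=\nu(\phi_\eta(z))+\mathcal{Q}(z)$ satisfies $\operatorname{Re}(zh'(z)/\mathcal{Q}(z))=\operatorname{Re}(1/(1+Bz))>0$; Lemma \ref{l7} then yields $p(z)\prec\phi_\eta(z)$ with $\cosh\sqrt{z}$ — wait, more precisely with $\phi_\eta$ — as the best dominant at that stage.

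For the second step I would use the same device as in Theorem \ref{t12}: since both $\phi_\eta$ and $\varrho(z)=\cosh\sqrt{z}$ are real on the real axis, map $\mathbb{D}$ onto convex-ish regions symmetric about $\mathbb{R}$, and are increasing along $(-1,1)$, the subordination $\phi_\eta(z)\prec\cosh\sqrt{z}$ is equivalent (necessary, and graphically sufficient) to the endpoint condition
\[
\cosh\sqrt{-1}\ \le\ \phi_\eta(-1)\ <\ \phi_\eta(1)\ \le\ \cosh\sqrt{1},
\]
that is $\cos 1\le \phi_\eta(-1)$ and $\phi_\eta(1)\le\cosh 1$. Evaluating $\phi_\eta$ at $\pm1$ gives $\phi_\eta(1)=1+\frac{A-B}{\eta B}\log(1+B)$ and $\phi_\eta(-1)=1+\frac{A-B}{\eta B}\log(1-B)$ (with the obvious linear expressions when $B=0$). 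Substituting and solving the two inequalities for $\eta$ produces two competing lower/upper bounds; deciding which one is binding is governed by the sign of $B$ and by the quantity $\mu=(\cosh1-1)/(1-\cos1)$ and the root $B_0$ of $(1+B)(1-B)^\mu=1$, exactly because the ratio of the two slack terms $\log(1+B)/\log(1-B)$ must be compared with $\mu$. This case analysis on $-1<B\le B_0$, $B_0<B<0$, $0<B<1$, and $B=0$ produces the four lines of \eqref{e55}.

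The main obstacle I anticipate is the second step: establishing rigorously that the endpoint condition \eqref{e53}-type inequality is genuinely \emph{sufficient} (not merely necessary) for $\phi_\eta(z)\prec\cosh\sqrt{z}$, since $\cosh\sqrt{z}$ is not convex and the image $\Omega_\varrho$ in \eqref{e65} has a nontrivial shape, so a priori containment of images is not implied by containment of the real traces. In the paper this is handled by the graphical verification in Figure \ref{smallsubdn}; I would argue it analytically by showing $\phi_\eta(\mathbb{D})$ is a disc (when $B=0$) or the logarithmic image of a disc, and that such a region lies inside $\Omega_\varrho$ precisely when its real-axis trace does, using that $\partial\Omega_\varrho$ and $\partial\phi_\eta(\mathbb{D})$ are both convex curves symmetric about $\mathbb{R}$ meeting the axis at the stated endpoints — or, failing a clean analytic argument, invoke the explicit monotonicity of $|\log(\omega+\sqrt{\omega^2-1})|$ along the relevant curves. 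A secondary, purely computational nuisance is verifying the transcendental comparison that pins down $B_0$ and confirms the claimed bounds are sharp, which follows by checking that equality in one of the two endpoint inequalities forces the extremal function $\phi_\eta$ to touch $\partial\Omega_\varrho$.
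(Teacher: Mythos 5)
Your proposal follows essentially the same route as the paper: apply Lemma \ref{l7} with $\nu(w)=1$, $\varphi(w)=\eta$ and the explicit solution $\phi_{\eta}(z)=1+\frac{A-B}{\eta B}\log(1+Bz)$ (resp.\ $1+\frac{A}{\eta}z$ when $B=0$) to get $p\prec\phi_{\eta}$, and then impose the real-axis endpoint conditions $\cos 1\leq\phi_{\eta}(-1)<\phi_{\eta}(1)\leq\cosh 1$, whose case analysis via $\mu$ and $B_{0}$ yields \eqref{e55}. The sufficiency of the endpoint condition, which you flag as the main obstacle, is treated in the paper only by the graphical observation in Figure \ref{graph1complex}, so your proposal matches the paper's argument (and your sketched analytic replacement would, if completed, only strengthen it).
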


\begin{proof}
The analytic function   
	$\phi_{\eta}:\overline{\mathbb{D}} \rightarrow \mathbb{C}$ defined as : 
	\[\phi_{\eta}(z)=1+\frac{A-B}{\eta B} \log(1+Bz),\]
 is a solution of the differential equation $1+\eta z \phi_{\eta}'(z)= (1+Az)/(1+Bz).$ We shall prove the result by using Lemma \ref{l7}, accordingly we assume
$\nu(u)=1,$  $\varphi(u)=\eta$ and define an analytic function $\mathcal{Q}:\overline{\mathbb{D}}\rightarrow \mathbb{C}$ as 
	\[\mathcal{Q}(z)=z \phi_{\eta}'(z)\varphi(\phi_{\eta}(z))=(A-B)z/(1+Bz).\]  Clearly, for the given choice of $A$ and $B,$ $\mathcal{Q}(z)$ is a starlike function in $\mathbb{D}.$ Note that if   $h(z)=\nu(\phi_{\eta}(z))+\mathcal{Q}(z),$ then 
	$z h'(z)/\mathcal{Q}(z) = 1/(1+B z),$ thus $\operatorname{Re}z h'(z)/\mathcal{Q}(z)>1/(1+|B|)>0.$ Further, in view of Lemma \ref{l7}, we conclude that $p(z) \prec \phi_{\eta}(z).$ Now we need to show that $\phi_{\eta}(z)\prec \cosh \sqrt{z}=:\varrho(z).$  We know that the following condition is necessary:
\begin{equation}\label{e30}
\varrho(-1) \leq \phi_{\eta}(-1) < \phi_{\eta}(1) \leq \varrho(1),
\end{equation}
for $\phi_{\eta}(z) \prec \varrho(z)$. However, a graphical observation presented in Figure \ref{graph1complex} for the value of $\eta$ satisfying \eqref{e30} shows that \eqref{e30} is not only necessary but also sufficient.
For $B \neq 0,$ we have from \eqref{e30}:
\begin{equation}\label{e38}
\cos 1 \leq 1+\frac{A-B}{\eta B}\log(1-B)  < 1+\frac{A-B}{\eta B}\log(1+B) \leq \cosh 1,
\end{equation} 
\begin{figure}[H]
    \centering
{\includegraphics[height=1.6in,width=2.8in]{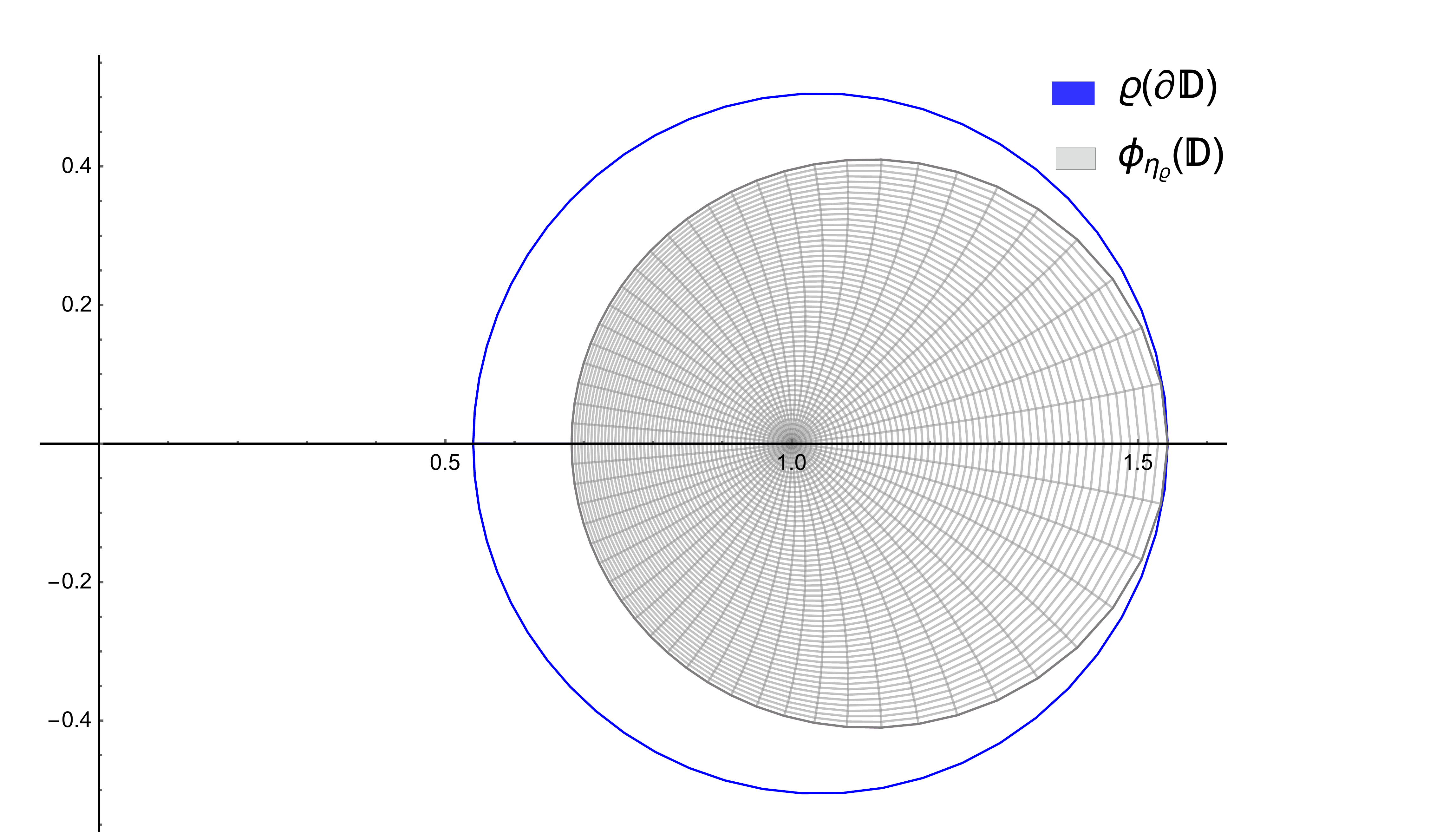}}   
\caption{Images of $\partial\mathbb{D}$ and $\mathbb{D}$ under the mappings $\varrho(z)=\cosh \sqrt{z}$  and $\phi_{\eta_{\varrho}}(z)$  respectively.}
\label{graph1complex}
\end{figure} 
\noindent  
then from \eqref{e38}, we conclude that $\phi_{\eta}(z)\prec \varrho(z),$ provided $\eta$ satisfies \eqref{e55}. 
Furthermore, if $B = 0,$ the function 
\[m_{\eta}(z):=1+(A/\eta)z\]
is a solution of the differential equation: $1+\eta z m_{\eta}'(z)=1+Az.$ Now in view of \eqref{e30}, we have the following inequality 
\begin{equation}\label{e60}
	    \cos 1\leq m_{\eta}(-1) < m_{\eta}(1) \leq \cosh 1,
\end{equation} which holds, whenever $A \leq 2 \eta {\sin}^{2}(1/2).$ Therefore, for $B=0$ we have $\phi_{\eta}(z) \prec \varrho(z),$ provided $\eta$ satisfies \eqref{e60}. Hence the result follows at once.  
\end{proof}
Note that Figure \ref{graph1complex}, illustrates  sharpness on $\eta$ for the case when  $A=1/2,$ $B=-1/2$ and $\eta=\eta_{\varrho}:=(\log 4)/(\cosh 1 -1)\approx 2.5526$ in Theorem \ref{t13}, where the extremal function is  $\phi_{\eta_{\varrho}}(z):=1-(2/\eta_{\varrho})\log(1-(z/2)).$

In Theorem \ref{t13}, choose $p(z)=zf'(z)/f(z),$ then we deduce the next result:
\begin{corollary}
Assume $A,B$ and $\mu$ as given in Theorem \ref{t13} and for $f\in\mathcal{A},$ 
\[\Phi_{\eta}(z):=1+\eta  \frac{z f'(z)}{f(z)}\left(1-\frac{z f'(z)}{f(z)}+\frac{zf''(z)}{f'(z)}\right).\]  If $\Phi_{\eta}(z)\prec (1+Az)/(1+Bz),$ then $f\in\mathcal{S}^{*}_{\varrho},$ provided $\eta$ satisfies \eqref{e55}. All the bounds attained on $\eta$ are sharp. 
\end{corollary}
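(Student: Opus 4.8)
The plan is to obtain this corollary as an immediate application of Theorem~\ref{t13}, taking the generic function $p$ in that theorem to be the logarithmic derivative $p(z):=zf'(z)/f(z)$. First I would record the basic normalization: for $f\in\mathcal{A}$ with $f(z)=z+a_{2}z^{2}+\cdots$, the quotient $f(z)/z$ is analytic and non-vanishing at the origin, so $p(z)=zf'(z)/f(z)$ is analytic near $0$ with $p(0)=1$. As is customary in this setting, the standing assumption that $\Phi_{\eta}$ is analytic in $\mathbb{D}$ guarantees that $p$ is analytic throughout $\mathbb{D}$, so that the hypotheses of Theorem~\ref{t13} are met.

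The key step---indeed the only genuine computation---is the identity $\Phi_{\eta}(z)=1+\eta z p'(z)$. Taking logarithmic derivatives of $p=zf'/f$ gives $p'(z)/p(z)=1/z+f''(z)/f'(z)-f'(z)/f(z)$, whence $zp'(z)=p(z)\bigl(1+zf''(z)/f'(z)-zf'(z)/f(z)\bigr)=\dfrac{zf'(z)}{f(z)}\Bigl(1-\dfrac{zf'(z)}{f(z)}+\dfrac{zf''(z)}{f'(z)}\Bigr)$. Multiplying by $\eta$ and adding $1$ reproduces exactly the expression defining $\Phi_{\eta}(z)$. Consequently the hypothesis $\Phi_{\eta}(z)\prec(1+Az)/(1+Bz)$ is literally the statement $1+\eta z p'(z)\prec(1+Az)/(1+Bz)$.

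With this translation in hand, I would invoke Theorem~\ref{t13} directly: since $\eta$ is assumed to satisfy \eqref{e55}, that theorem yields $p(z)\prec\cosh\sqrt{z}$, i.e. $zf'(z)/f(z)\prec\varrho(z)$, which is precisely the assertion $f\in\mathcal{S}^{*}_{\varrho}$. No new estimate is required; the work has all been done in Theorem~\ref{t13}.

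Finally, for the sharpness claim I would transfer the extremality already established in Theorem~\ref{t13}. There the bounds \eqref{e55} are sharp, with extremal dominant $\phi_{\eta}(z)=1+\frac{A-B}{\eta B}\log(1+Bz)$ (and its $B=0$ analogue $m_{\eta}$). To realize this at the level of the corollary I would recover the extremal $f$ from the extremal $p=\phi_{\eta}$ by integration: the function $f(z)=z\exp\int_{0}^{z}\bigl(\phi_{\eta}(t)-1\bigr)/t\,dt$ lies in $\mathcal{A}$, satisfies $zf'(z)/f(z)=\phi_{\eta}(z)$, and so inherits the boundary equalities at $z=\pm1$ from Theorem~\ref{t13}, showing that no bound on $\eta$ in \eqref{e55} can be relaxed. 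The only point requiring a line of care is verifying that this $f$ is well-defined and correctly normalized, which follows from $\phi_{\eta}(0)=1$. I do not anticipate any substantial obstacle: the entire argument is a dictionary translation through $p=zf'/f$, and the logarithmic-derivative identity is the one thing that must be checked.
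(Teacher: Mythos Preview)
Your proposal is correct and is exactly the approach the paper takes: the corollary is stated immediately after the line ``In Theorem~\ref{t13}, choose $p(z)=zf'(z)/f(z)$, then we deduce the next result,'' with no further proof given. Your logarithmic-derivative computation and the sharpness argument via the extremal $f(z)=z\exp\int_{0}^{z}(\phi_{\eta}(t)-1)/t\,dt$ simply spell out what the paper leaves implicit.
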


\noindent In the next result, we apply Lemma \ref{l7} and derive its corresponding corollaries.
\begin{theorem}\label{t14}
Let $p(z)$ be a non-vanishing analytic function in $\mathbb{D}$ with $p(0)=1,$ such that  
\[\eta \text{} p(z) + \frac{zp'(z)}{p(z)}\prec \eta \cosh \sqrt{z}+\frac{\sqrt{z}}{2}\tanh \sqrt{z},\] where 
 $\eta \geq \eta_{0}$ with  
\begin{align}\label{e39}
\eta_{0}=-\left(\frac{1}{2}+\csch 2 \right)\sech 1\approx -0.502\cdots.
\end{align}
Then $p(z)\prec \cosh \sqrt{z},$ and $\cosh \sqrt{z}$ is the best dominant.
\end{theorem}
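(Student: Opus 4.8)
The plan is to apply Lemma~\ref{l7} (the Miller--Mocanu lemma recalled above) with the dominant $q(z)=\cosh\sqrt{z}=:\varrho(z)$, exactly as in the proofs of Theorems~\ref{t11} and~\ref{t14}'s siblings. First I would set $\nu(w)=\eta w$ and $\varphi(w)=1/w$, so that $\mathcal{Q}(z)=zq'(z)\varphi(q(z))=zq'(z)/q(z)=(\sqrt{z}/2)\tanh\sqrt{z}$ and $h(z)=\nu(q(z))+\mathcal{Q}(z)=\eta\cosh\sqrt{z}+(\sqrt{z}/2)\tanh\sqrt{z}$, which is precisely the right-hand side of the subordination in the hypothesis. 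Thus the subordination $\eta p(z)+zp'(z)/p(z)\prec \eta\varrho(z)+zq'(z)/q(z)$ is literally $\nu(p(z))+zp'(z)\varphi(p(z))\prec\nu(q(z))+zq'(z)\varphi(q(z))$, and the conclusion $p(z)\prec\varrho(z)$ with $\varrho$ the best dominant follows once the hypotheses (i) and (ii) of Lemma~\ref{l7} are verified.

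Next I would check the two structural conditions. For (i): $\mathcal{Q}(z)=(\sqrt{z}/2)\tanh\sqrt{z}$ is the same function that already appears in the proof of Theorem~\ref{t11} (up to the constant $\eta$), where it is shown that $\operatorname{Re}\big(z\mathcal{Q}'(z)/\mathcal{Q}(z)\big)>0$ on $\mathbb{D}$, so $\mathcal{Q}$ is starlike univalent in $\mathbb{D}$; hence the ``$\mathcal{Q}$ starlike univalent'' alternative of hypothesis~(i) holds and we need not worry about convexity of $h$. For (ii): I must compute $zh'(z)/\mathcal{Q}(z)$. Writing $h(z)=\eta\varrho(z)+\mathcal{Q}(z)$ gives $zh'(z)=\eta z\varrho'(z)+z\mathcal{Q}'(z)$, and since $\varphi(w)=1/w$ we have $z\varrho'(z)=\varrho(z)\cdot\mathcal{Q}(z)$, so
\[
\frac{zh'(z)}{\mathcal{Q}(z)}=\eta\,\varrho(z)+\frac{z\mathcal{Q}'(z)}{\mathcal{Q}(z)}=\eta\cosh\sqrt{z}+\Big(\tfrac12+\sqrt{z}\csch 2\sqrt{z}\Big),
\]
using the identity $z\mathcal{Q}'(z)/\mathcal{Q}(z)=\tfrac12+\sqrt{z}\csch 2\sqrt{z}$ already recorded in the proof of Theorem~\ref{t11}. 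So the requirement $\operatorname{Re}(zh'(z)/\mathcal{Q}(z))>0$ becomes
\[
\operatorname{Re}\!\left(\eta\cosh\sqrt{z}\right)+\operatorname{Re}\!\left(\tfrac12+\sqrt{z}\csch 2\sqrt{z}\right)>0 \qquad (z\in\mathbb{D}).
\]

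The main obstacle — and the only real content beyond bookkeeping — is turning this last inequality into the explicit threshold $\eta\geq\eta_0=-(\tfrac12+\csch 2)\sech 1$. The idea is to bound each real part over $\mathbb{D}$ by its value at $z=1$ (the ``worst'' boundary point, by the maximum/minimum principle applied to the relevant harmonic or real-analytic expressions): one shows $\operatorname{Re}(\sqrt{z}\csch 2\sqrt{z})\geq\csch 2$ and, for $\eta\geq\eta_0$ (note $\eta_0<0$, so $\eta$ could be negative), $\operatorname{Re}(\eta\cosh\sqrt{z})\geq\eta\cosh 1$ when $\eta\geq 0$, while for $\eta<0$ one uses $\operatorname{Re}(\cosh\sqrt{z})\le \cosh 1$ to get $\operatorname{Re}(\eta\cosh\sqrt{z})\ge \eta\cosh 1$ as well; either way $\operatorname{Re}(\eta\cosh\sqrt z)\ge \eta\cosh 1$ for all real $\eta$ since $\operatorname{Re}\cosh\sqrt z\in(\cos 1,\cosh 1)$. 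Combining, $\operatorname{Re}(zh'(z)/\mathcal{Q}(z))\geq \eta\cosh 1+\tfrac12+\csch 2$, which is $\geq 0$ precisely when $\eta\geq -(\tfrac12+\csch 2)\sech 1=\eta_0$, and strict inequality holds in the open disc. This verifies hypothesis~(ii) of Lemma~\ref{l7}, and the theorem follows; the best-dominant assertion is automatic from the lemma. I would also remark that the bound $\eta_0$ is sharp in the sense that equality in $\operatorname{Re}(zh'(z)/\mathcal{Q}(z))$ is approached as $z\to 1$, so no smaller $\eta$ keeps $h$ admissible.
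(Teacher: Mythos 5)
Your argument is correct and is essentially the paper's own proof: the same application of Lemma \ref{l7} with $\nu(w)=\eta w$, $\varphi(w)=1/w$, $q(z)=\cosh\sqrt{z}$, the same starlikeness verification for $\mathcal{Q}(z)=(\sqrt{z}/2)\tanh\sqrt{z}$ via $\operatorname{Re}(z\mathcal{Q}'(z)/\mathcal{Q}(z))=\operatorname{Re}(\tfrac12+\sqrt{z}\csch 2\sqrt{z})>0$, and the same estimate $\operatorname{Re}(zh'(z)/\mathcal{Q}(z))\ge \eta\cosh 1+\tfrac12+\csch 2$, which is nonnegative exactly when $\eta\ge\eta_{0}=-(\tfrac12+\csch 2)\sech 1$. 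The only slip is your claim that $\operatorname{Re}(\eta\cosh\sqrt{z})\ge\eta\cosh 1$ also for $\eta>0$ (the correct lower bound there is $\eta\cos 1$, since $\operatorname{Re}\cosh\sqrt{z}$ ranges over $(\cos 1,\cosh 1)$), but this is immaterial: for $\eta\ge 0$ positivity of $\operatorname{Re}(zh'(z)/\mathcal{Q}(z))$ is immediate from $\operatorname{Re}\cosh\sqrt{z}>0$, and the binding case $\eta_{0}\le\eta<0$, where $\operatorname{Re}\cosh\sqrt{z}\le\cosh 1$ does give $\operatorname{Re}(\eta\cosh\sqrt{z})\ge\eta\cosh 1$, is the one you handle correctly.
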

\begin{proof}  
Let $q(z)=\cosh \sqrt{z} $ and $\varphi(w)=1/w.$ Clearly, $q(z)$ is a convex univalent function with $q(0)=1$ and 
\[\mathcal{Q}(z)=zq'(z)\varphi(q(z))=\frac{zq'(z)}{q(z)}=\frac{\sqrt{z}}{2}\tanh \sqrt{z}.\] Now, it can be easily verified that $\operatorname{Re}(z \mathcal{Q}'(z)/\mathcal{Q}(z))=\operatorname{Re}((1/2)+\sqrt{z} \csch 2 \sqrt{z})>(1/2)+\csch 2 > 0,$
then $\mathcal{Q}(z)$ is starlike in $\mathbb{D}.$ Further, $\varphi(z)$ is analytic in $\mathbb{C}\setminus\{0\}$ containing $ q(\mathbb{D})$ with $ \varphi(w)\neq 0,$ where $w\in q(\mathbb{D}).$ Now set \[\nu(w)=\eta w \text{ } \text{ and } \text{ } h(z)=\nu(q(z)) + \mathcal{Q}(z) = \eta q(z) +\frac{zq'(z)}{q(z)} = \eta \cosh \sqrt{z}+\frac{\sqrt{z}}{2}\tanh \sqrt{z}.\]
For $\eta\geq \eta_{0},$ where $\eta_{0}$ is given by \eqref{e39}, it can be verified that
\[\operatorname{Re} \left(\eta \cosh \sqrt{z} + \sqrt{z}\csch 2 \sqrt{z}\right)>-\frac{1}{2},\] therefore, 
$\operatorname{Re}(z h'(z)/\mathcal{Q}(z)) > 0$ for each $\eta \geq \eta_{0}.$  Finally, by Lemma \ref{l7} we conclude that $p(z)\prec \cosh \sqrt{z}.$
\end{proof}
On taking $\eta=0$ in Theorem \ref{t14}, we deduce the following corollary.
\begin{corollary}
Suppose $p(z)$ is analytic in $\mathbb{D}$ with $p(z)\neq 0$ in $\mathbb{D}$ and $p(0)=1$ such that
\[\frac{zp'(z)}{p(z)}\prec \frac{\sqrt{z}}{2}\tanh \sqrt{z},\] then
$p(z)\prec \cosh \sqrt{z},$ and $\cosh \sqrt{z}$ is the best dominant.
\end{corollary}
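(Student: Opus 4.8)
The plan is to recognize this corollary as the specialization $\eta=0$ of Theorem~\ref{t14} and to check that the standing hypothesis $\eta\geq\eta_{0}$ is not violated there. Since $\eta_{0}=-(1/2+\csch 2)\sech 1\approx-0.502<0$, the value $\eta=0$ indeed satisfies $\eta\geq\eta_{0}$. Setting $\eta=0$ collapses the subordination
\[\eta\,p(z)+\frac{zp'(z)}{p(z)}\prec \eta\cosh\sqrt{z}+\frac{\sqrt{z}}{2}\tanh\sqrt{z}\]
to precisely the hypothesis of the corollary, namely $zp'(z)/p(z)\prec(\sqrt{z}/2)\tanh\sqrt{z}$. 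As $p$ is assumed non-vanishing in $\mathbb{D}$ with $p(0)=1$, all the hypotheses of Theorem~\ref{t14} are met at $\eta=0$, and its conclusion yields $p(z)\prec\cosh\sqrt{z}$ with $\cosh\sqrt{z}$ the best dominant. Thus the corollary is an immediate consequence.

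For completeness I would also record the direct verification through Lemma~\ref{l7}, which is even cleaner when $\eta=0$. Take $q(z)=\cosh\sqrt{z}$, the constant map $\nu(w)\equiv 0$, and $\varphi(w)=1/w$; note $\varphi$ is analytic and non-vanishing on $q(\mathbb{D})$ since $\cosh\sqrt{z}$ stays bounded away from $0$ (indeed $q(-1)=\cos 1>0$). Then
\[\mathcal{Q}(z)=z q'(z)\varphi(q(z))=\frac{zq'(z)}{q(z)}=\frac{\sqrt{z}}{2}\tanh\sqrt{z},\]
and $h(z)=\nu(q(z))+\mathcal{Q}(z)=\mathcal{Q}(z)$. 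The starlikeness of $\mathcal{Q}$ follows exactly as in the proof of Theorem~\ref{t14} from $\operatorname{Re}(z\mathcal{Q}'(z)/\mathcal{Q}(z))=\operatorname{Re}((1/2)+\sqrt{z}\,\csch 2\sqrt{z})>(1/2)+\csch 2>0$. Condition~(ii) of Lemma~\ref{l7} then reduces to $\operatorname{Re}(zh'(z)/\mathcal{Q}(z))=\operatorname{Re}(z\mathcal{Q}'(z)/\mathcal{Q}(z))>0$, which is simply the starlikeness just established; this is exactly why no positive lower bound on $\eta$ is needed once $\eta=0$.

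With $\nu$ and $\varphi$ as above, the hypothesis $zp'(z)/p(z)\prec(\sqrt{z}/2)\tanh\sqrt{z}$ is literally $\nu(p(z))+zp'(z)\varphi(p(z))\prec\nu(q(z))+zq'(z)\varphi(q(z))$, so Lemma~\ref{l7} applies and delivers $p(z)\prec q(z)=\cosh\sqrt{z}$ with $q$ the best dominant, matching the asserted best-dominant claim. There is essentially no obstacle: the only things to verify are the trivial numerical inequality $0\geq\eta_{0}$ and the observation that the admissibility requirement of Lemma~\ref{l7} degenerates to the ($\eta$-independent) starlikeness of $\mathcal{Q}$ when $\eta=0$. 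Accordingly I would present the one-line deduction from Theorem~\ref{t14} as the proof, optionally appending the self-contained verification above as a remark.
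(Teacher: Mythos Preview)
Your proof is correct and follows exactly the paper's approach: the paper states this corollary immediately after Theorem~\ref{t14} with the one-line remark ``On taking $\eta=0$ in Theorem~\ref{t14}, we deduce the following corollary,'' which is precisely your deduction (and your check that $0\geq\eta_{0}$ makes the specialization legitimate). Your optional direct verification via Lemma~\ref{l7} is a nice addition but not needed for the paper's argument.
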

On substituting $p(z)=zf'(z)/f(z)$ in Theorem \ref{t14}, we state the next corollary.                                                                                                   \begin{corollary}
Let $f\in\mathcal{A}$ and $\eta \geq \eta_{0},$ where $\eta_{0}$ is given by \eqref{e39}, such that 
\[1+\frac{zf''(z)}{f'(z)}-(1-\eta)\frac{zf'(z)}{f(z)}\prec \eta \cosh \sqrt{z}+\frac{\sqrt{z}}{2}\tanh \sqrt{z},\]
then $f\in\mathcal{S}^{*}_{\varrho}.$
\end{corollary}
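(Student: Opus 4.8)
The plan is to apply Miller and Mocanu's differential subordination lemma (Lemma~\ref{l7}) with the dominant $q(z)=\cosh\sqrt{z}$, exactly as in the proof of Theorem~\ref{t14}, and then transfer the conclusion to the function $f$ by the substitution $p(z)=zf'(z)/f(z)$. First I would set $p(z)=zf'(z)/f(z)$; since $f\in\mathcal{A}$ we have $p(0)=1$, and the hypothesis that $f$ be such that the displayed subordination holds is precisely the statement that
\[
\eta\,p(z)+\frac{zp'(z)}{p(z)}\prec \eta\cosh\sqrt{z}+\frac{\sqrt{z}}{2}\tanh\sqrt{z}.
\]
The only nontrivial point here is the algebraic identity converting $\eta p(z)+zp'(z)/p(z)$ into $1+\dfrac{zf''(z)}{f'(z)}-(1-\eta)\dfrac{zf'(z)}{f(z)}$: differentiating $\log p = \log z + \log f' - \log f$ gives $\dfrac{zp'(z)}{p(z)}=1+\dfrac{zf''(z)}{f'(z)}-\dfrac{zf'(z)}{f(z)}$, and adding $\eta p(z)=\eta\,\dfrac{zf'(z)}{f(z)}$ yields the claimed expression.

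Next I would invoke Theorem~\ref{t14} directly: with $p(z)=zf'(z)/f(z)$ non-vanishing and analytic on $\mathbb{D}$ with $p(0)=1$ (non-vanishing because $f\in\mathcal{S}$-type functions have $f(z)/z\neq0$ and $f'\neq 0$ near the origin; more precisely $zf'(z)/f(z)$ is analytic with value $1$ at $0$ and can be assumed non-zero on $\mathbb{D}$ under the standing hypotheses, exactly as is implicit throughout the paper), the hypothesis $\eta\ge\eta_{0}$ with $\eta_{0}=-(\tfrac12+\csch 2)\sech 1$ is inherited verbatim. Theorem~\ref{t14} then gives $p(z)\prec\cosh\sqrt{z}$, i.e. $zf'(z)/f(z)\prec\cosh\sqrt{z}$, which is by definition the statement $f\in\mathcal{S}^{*}_{\varrho}$. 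So the corollary follows at once, and there is essentially no new content beyond the substitution and the elementary logarithmic-derivative identity.

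I do not anticipate a genuine obstacle here, since everything rests on Theorem~\ref{t14}, which is already proved. The one place where a careful reader might want a line of justification is the admissibility of the substitution --- namely that $p(z)=zf'(z)/f(z)$ is a legitimate candidate for the ``$p$'' in Theorem~\ref{t14}, i.e. that it is analytic and non-vanishing in $\mathbb{D}$. This is the usual tacit assumption when one writes $zf'(z)/f(z)$, and in context (the subordination forces $zf'(z)/f(z)$ to lie in a region bounded away from $0$) it is automatic; I would simply remark that the stated subordination already guarantees $p$ is well-defined and non-zero. Thus the proof reduces to: (1) note the identity for $zp'/p$ in terms of $f$; (2) rewrite the hypothesis as the subordination required by Theorem~\ref{t14}; (3) apply Theorem~\ref{t14} to conclude $zf'(z)/f(z)\prec\cosh\sqrt{z}$; (4) read off $f\in\mathcal{S}^{*}_{\varrho}$. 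A one- or two-sentence proof suffices, and indeed the paper likely states it as an immediate consequence.
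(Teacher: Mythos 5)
Your proposal is correct and matches the paper exactly: the paper states this corollary as an immediate consequence of Theorem \ref{t14} via the substitution $p(z)=zf'(z)/f(z)$, with the logarithmic-derivative identity $zp'(z)/p(z)=1+zf''(z)/f'(z)-zf'(z)/f(z)$ converting the hypothesis into the subordination of Theorem \ref{t14}. No further comment is needed.
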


\section{Some more results using admissibility conditions}
\noindent For $a\in\mathbb{C}$ and $n\in\mathbb{N},$ we now define a class of analytic functions $\mathcal{H}[a,n]$ as follows: \[\mathcal{H}[a,n]=\{f:\mathbb{D}\to \mathbb{C}:f \text{ is analytic; } f(z)=a+a_{n+1}z^{n+1}+\ldots\}.\] Clearly $\mathcal{H}_{1}:=\mathcal{H}[1,1].$ 
\begin{definition}
Let $\mathcal{Q}$ be the set of functions $q$ that are analytic and injective on $\overline{\mathbb{D}}\backslash E(q),$ where 
\[E(q)=\left\{\zeta\in\partial\mathbb{D}: \displaystyle{\lim _{z\rightarrow \zeta} q(z)=\infty}\right\},\] such that $q'(\zeta)\neq 0 $ for $\zeta \in \partial \mathbb{D}\backslash E(q).$ 
\end{definition} 
For $\Omega\subset\mathbb{C},$ $q\in\mathcal{Q}$ and $n\in\mathbb{N},$ let the class $\Psi_{n}(\Omega,q)$ consist of functions
$\psi:\mathbb{C}^{3}\times \mathbb{D}\to \mathbb{C}$ that meet the admissibility conditions: 
\begin{align*}
& \psi(r,s,t;z)\notin\Omega, \text{ whenever } (r,s,t;z)\in \mathbb{C}^{3}\times \mathbb{D}, \\& r=q(\xi), \text{ } s=m \xi q'(\xi), \text{ } \operatorname{Re}\left(1+\frac{t}{s}\right) \geq m \operatorname{Re}\left(1+\frac{\xi q''(\xi)}{q'(\xi)}\right),
\end{align*}
for $z\in\mathbb{D}, \xi\in\partial\mathbb{D}\backslash E(q)$ and $m\geq n.$ Denote $\Psi_{1}(\Omega,q)$ by $\Psi(\Omega,q).$

\begin{theorem}\cite[Theorem 2.3b]{Miller & Mocanu}\label{t15}
Let $\psi\in\Psi_{n}(\Omega,q)$ with $q(0)=a.$ If $p\in\mathcal{H}[a,n]$ satisfies 
\[\psi(p(z),zp'(z),z^{2}p''(z);z)\in\Omega,\] then $p(z)\prec q(z).$
\end{theorem}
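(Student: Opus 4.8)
The plan is to record that Theorem~\ref{t15} is simply the statement of \cite[Theorem 2.3b]{Miller & Mocanu}, so that there is nothing to prove beyond citing it; nonetheless I will sketch the argument underlying that classical result, since it is the engine behind all the admissibility applications that follow. The proof is by contradiction using the open-mapping / boundary behaviour of subordinations, exactly in the spirit of Lemma~\ref{l8} (the Jack--Miller--Mocanu lemma) invoked earlier in the paper.

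\smallskip

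\emph{Setup.} Suppose, for contradiction, that $p\not\prec q$. Since $p\in\mathcal{H}[a,n]$ and $q\in\mathcal{Q}$ with $q(0)=a=p(0)$, and $q$ is injective on $\overline{\mathbb{D}}\setminus E(q)$, one shows there exist points $z_{0}\in\mathbb{D}$ and $\xi_{0}\in\partial\mathbb{D}\setminus E(q)$, together with an integer $m\ge n$, such that $p(z_{0})=q(\xi_{0})$, $z_{0}p'(z_{0})=m\,\xi_{0}q'(\xi_{0})$, and
\[
\operatorname{Re}\!\left(1+\frac{z_{0}p''(z_{0})}{p'(z_{0})}\right)\;\ge\; m\operatorname{Re}\!\left(1+\frac{\xi_{0}q''(\xi_{0})}{q'(\xi_{0})}\right).
\]
This is the key structural lemma of Miller--Mocanu (their Lemma~2.2d): the function $p$ must ``touch'' the curve $q(\partial\mathbb{D})$ from inside in a controlled way, and the multiplicity $m$ arises exactly as in Lemma~\ref{l8}. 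One obtains it by considering, for $r<1$ close to $1$, the largest disc $|z|\le r$ on which $p(\mathbb{D}_{r})\subset q(\mathbb{D})$ fails, and extracting a first boundary contact point; the inequality on the second derivative is the second-order refinement of the maximum principle.

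\smallskip

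\emph{Conclusion.} Now set $r=p(z_{0})$, $s=z_{0}p'(z_{0})$, $t=z_{0}^{2}p''(z_{0})$, and $z=z_{0}$. By the displayed relations, $(r,s,t;z_{0})$ satisfies precisely the hypotheses $r=q(\xi_{0})$, $s=m\xi_{0}q'(\xi_{0})$, $\operatorname{Re}(1+t/s)\ge m\operatorname{Re}(1+\xi_{0}q''(\xi_{0})/q'(\xi_{0}))$ appearing in the definition of the admissible class $\Psi_{n}(\Omega,q)$. Hence, since $\psi\in\Psi_{n}(\Omega,q)$, the admissibility condition forces $\psi(r,s,t;z_{0})\notin\Omega$, i.e. $\psi(p(z_{0}),z_{0}p'(z_{0}),z_{0}^{2}p''(z_{0});z_{0})\notin\Omega$. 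This contradicts the hypothesis $\psi(p(z),zp'(z),z^{2}p''(z);z)\in\Omega$ for all $z\in\mathbb{D}$. Therefore $p\prec q$.

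\smallskip

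The only genuinely nontrivial ingredient is the boundary-contact lemma producing the triple $(z_{0},\xi_{0},m)$ with the second-order inequality; everything else is bookkeeping. Since this is established in full generality in \cite{Miller & Mocanu}, we simply cite it, and in the sequel we use Theorem~\ref{t15} as a black box, the work being to verify in each application that a concrete $\psi$ lies in $\Psi_{n}(\Omega,q)$ for the choice $q(z)=\cosh\sqrt{z}$ and an appropriate $\Omega$ (for instance $\Omega=\varrho(\mathbb{D})=\Omega_{\varrho}$ as in \eqref{e65}).
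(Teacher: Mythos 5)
Your proposal is correct: the paper itself offers no proof of this result, citing it as \cite[Theorem 2.3b]{Miller & Mocanu}, and your sketch reproduces the standard Miller--Mocanu argument (boundary-contact lemma producing $z_{0},\xi_{0},m$ with $p(z_{0})=q(\xi_{0})$, $z_{0}p'(z_{0})=m\xi_{0}q'(\xi_{0})$ and the second-order inequality, then contradiction via the admissibility condition). Nothing further is needed; treating the theorem as a cited black box is exactly how the paper uses it.
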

If $\Omega \subsetneq \mathbb{C}$ is a simply connected domain, then there exists a conformal mapping $h(z)$ from $\mathbb{D}$ onto $\Omega = h(\mathbb{D}).$ Symbolically, let $\Psi_{n}(h(\mathbb{D}),q)$ represent $\Psi_{n}(\Omega,q).$ Further, if $\psi(p(z),zp'(z),z^{2}p''(z);z)$ is analytic in $\mathbb{D},$ then  $\psi(p(z),zp'(z),z^{2}p''(z);z)\in\Omega$ can be rewritten in terms of subordination: \[\psi(p(z),zp'(z),z^{2}p''(z);z)\prec h(z).\]
Consider $\Omega\subsetneq \mathbb{C}$ and  $q \in \mathcal{H}_{1}$ be given by $q(z):=\cosh \sqrt{z}.$ As $q(z)$ is univalent in $\overline{\mathbb{D}}\backslash E(q),$ where $E(q)=\emptyset,$  also $q(0)=1$ and $q(\mathbb{D})=\Omega_{\varrho},$ where $\Omega_{\varrho}$ is given by \eqref{e65}. Below we study the class of admissible functions $\psi_{n}(\Omega,q).$ \\ Note that for $|\xi|=1,$
\[q(\xi)\in q(\partial \mathbb{D}) = \partial \Omega_{\varrho} = \left\{\omega\in\mathbb{C}:|\log(\omega+\sqrt{\omega^{2}-1})|^{2}=1\right\}.\] Infact if $\xi=e^{i\theta},$ $-\pi<\theta\leq \pi,$ then
\[\xi q'(\xi) = \frac{\sqrt{\xi}}{2}\sinh \sqrt{\xi}, \text{ } q''(\xi) = \frac{1}{4 \xi}\left(\cosh \sqrt{\xi} - \frac{\sinh \sqrt{\xi}}{\sqrt{\xi}} \right)\]
and \[1+ \frac{ \xi q''(\xi)}{q'(\xi)} = \frac{1}{2}\left(1+ \sqrt{\xi} \coth \sqrt{\xi} \right).\] Further, it can be verified that the minimum value of $\operatorname{Re}(\sqrt{\xi} \coth \sqrt{\xi})$ is  attained at $\xi=-1.$ Thus the admissibility conditions with $q(z)=\cosh \sqrt{z},$ can be defined as follows:


\begin{definition}\label{def1}
Let $\Omega\subsetneq \mathbb{C}$ and $n\geq 1,$ then for $q(z)=\cosh \sqrt{z},$ the admissibility conditions are given as follows:
	\begin{align}\label{e56}
		\psi(r,s,t;z)\notin\Omega \text{ whenever }		& 
  \left\{
\begin{array}{lll}
			r=q(\xi)= \cosh \sqrt{\xi}, \\ \ \\ s= m \xi q'(\xi) =  \dfrac{m}{2}\sqrt{\xi}  \sinh \sqrt{\xi}, \\ \ \\
			\operatorname{Re}\left(1+\dfrac{t}{s}\right) \geq \dfrac{m}{2}(1+\cot 1),
		\end{array}
  \right.
	\end{align}
 where $z\in\mathbb{D},\xi\in\partial\mathbb{D}\backslash E(q)$ and $m \geq 1.$ We denote this class of admissible functions by $\Psi(q_{\varrho}).$
 \end{definition}
 In view of Theorem \ref{t15} and Definition \ref{def1}, we directly establish the next result:
 \begin{theorem}\label{t16}
 Let $p\in\mathcal{H}_{1}.$  
 \begin{enumerate}[(i)]
 \item 
 If $\psi\in \Psi(q_{\varrho}),$ then 
 $\psi(p(z),zp'(z),z^{2}p''(z);z)\in \Omega \Rightarrow p(z)\prec \cosh \sqrt{z}. $ \\
 \item If   $\psi\in \Psi(q_{\varrho}),$ with $\Omega=\Omega_{\varrho},$ then
$\psi(p(z),zp'(z),z^{2}p''(z);z)\prec \cosh \sqrt{z} \Rightarrow p(z)\prec \cosh \sqrt{z}.$
\end{enumerate}
 \end{theorem}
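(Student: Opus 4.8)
The plan is to derive Theorem~\ref{t16} as an immediate specialization of the general admissibility result Theorem~\ref{t15} to the particular dominant $q(z)=\cosh\sqrt{z}$, using the admissibility conditions recorded in Definition~\ref{def1}. First I would verify that $q(z)=\cosh\sqrt{z}$ is a legitimate choice in Theorem~\ref{t15}: it lies in $\mathcal{H}_1$ since $q(0)=1$ and the expansion \eqref{e57} shows $q(z)=1+z/2+\cdots$; it is analytic and injective on $\overline{\mathbb{D}}$ because $\varrho$ is a conformal map of $\mathbb{D}$ onto $\Omega_\varrho$ (as noted after \eqref{e65}), so $E(q)=\emptyset$ and $q\in\mathcal{Q}$; and $q'(\zeta)\neq 0$ on $\partial\mathbb{D}$ follows from $\xi q'(\xi)=\tfrac12\sqrt{\xi}\sinh\sqrt{\xi}$, which is nonzero for $|\xi|=1$. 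With $q(0)=a=1$, the hypothesis $q(0)=a$ of Theorem~\ref{t15} is met.

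Next I would show that $\Psi(q_\varrho)$, as given in Definition~\ref{def1}, coincides with $\Psi_1(\Omega,q)$ for this $q$. This reduces to computing $\xi q'(\xi)$, $q''(\xi)$, and $1+\xi q''(\xi)/q'(\xi)$ for $|\xi|=1$, which the excerpt already carries out: $\xi q'(\xi)=\tfrac{1}{2}\sqrt{\xi}\sinh\sqrt{\xi}$ and $1+\xi q''(\xi)/q'(\xi)=\tfrac12(1+\sqrt{\xi}\coth\sqrt{\xi})$. The only substantive point is that the admissibility condition in $\Psi_1(\Omega,q)$ requires $\operatorname{Re}(1+t/s)\ge m\operatorname{Re}(1+\xi q''(\xi)/q'(\xi))$ for \emph{all} $\xi\in\partial\mathbb{D}$, and one wants to replace the $\xi$-dependent right side by a single worst-case constant. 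Since $\operatorname{Re}(1+\xi q''(\xi)/q'(\xi))=\tfrac12(1+\operatorname{Re}(\sqrt{\xi}\coth\sqrt{\xi}))$, and $\operatorname{Re}(\sqrt{\xi}\coth\sqrt{\xi})$ attains its minimum over $|\xi|=1$ at $\xi=-1$ (value $\cot 1$, since $\sqrt{-1}=i$ gives $i\coth i=i\cdot(-i\cot 1)=\cot 1$), the infimum of the right-hand side is $\tfrac{m}{2}(1+\cot 1)$. Hence requiring $\operatorname{Re}(1+t/s)\ge \tfrac{m}{2}(1+\cot 1)$ is exactly the condition guaranteeing $\psi\in\Psi_1(\Omega,q)$, so $\Psi(q_\varrho)=\Psi_1(\Omega,q)$.

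For part~(i), given $\psi\in\Psi(q_\varrho)=\Psi_1(\Omega,q)$ and $p\in\mathcal{H}_1=\mathcal{H}[1,1]$ with $\psi(p(z),zp'(z),z^2p''(z);z)\in\Omega$, Theorem~\ref{t15} applies verbatim with $n=1$, $a=1$, and yields $p(z)\prec q(z)=\cosh\sqrt{z}$. For part~(ii), I would invoke the remark preceding Definition~\ref{def1}: when $\Omega=\Omega_\varrho$ is the simply connected domain $q(\mathbb{D})$ with conformal map $h=\varrho$, and $\psi(p(z),zp'(z),z^2p''(z);z)$ is analytic in $\mathbb{D}$, the membership $\psi(\cdots)\in\Omega_\varrho$ is equivalent to the subordination $\psi(\cdots)\prec\cosh\sqrt{z}$. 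Applying part~(i) then gives $p(z)\prec\cosh\sqrt{z}$.

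I do not anticipate a real obstacle here—the theorem is a packaging of Theorem~\ref{t15}. The one place to be careful is the minimization of $\operatorname{Re}(\sqrt{\xi}\coth\sqrt{\xi})$ over $\partial\mathbb{D}$, i.e.\ justifying that the minimum occurs at $\xi=-1$; writing $\xi=e^{i\theta}$ and analyzing $\operatorname{Re}(e^{i\theta/2}\coth e^{i\theta/2})$ as a function of $\theta\in(-\pi,\pi]$ confirms this, and the excerpt already asserts it, so I would simply cite that computation. Everything else is bookkeeping: matching notation ($\mathcal{H}_1$ with $\mathcal{H}[1,1]$, $q_\varrho$ with the admissible class for this $q$) and the standard equivalence between a domain-membership statement and a subordination statement for a simply connected $\Omega$.
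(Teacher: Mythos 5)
Your proposal is correct and follows essentially the same route as the paper, which obtains Theorem \ref{t16} directly from Theorem \ref{t15} together with the admissibility class $\Psi(q_{\varrho})$ of Definition \ref{def1} (using the computations of $\xi q'(\xi)$, $1+\xi q''(\xi)/q'(\xi)$ and the minimum of $\operatorname{Re}(\sqrt{\xi}\coth\sqrt{\xi})$ at $\xi=-1$, and the membership--subordination equivalence for the simply connected $\Omega_{\varrho}$ in part (ii)). The only slight imprecision is your claim that $\Psi(q_{\varrho})=\Psi_{1}(\Omega,q)$; replacing the $\xi$-dependent bound by the worst-case constant $\tfrac{m}{2}(1+\cot 1)$ gives only the inclusion $\Psi(q_{\varrho})\subseteq\Psi_{1}(\Omega,q)$, which is all that is needed.
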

\noindent Recently insightful work is carried out in establishing several first and second order differential subordination implication results, using the concept of admissibility. For instance, many authors have studied the class of admissible functions associated with different analytic functions, such as: modified sigmoid function, lemniscate of Bernoulli, exponential function, petal shaped function, see \cite{Kumar & Goel(2020),Madaan(2019),Naz(2019),Neha & Sivaprasad(2023)}. Further, Kumar and  Goel \cite{Kumar & Goel(2020)}, modified the existing third order differential subordination results of Antonino and Miller \cite{Antonino & Miller (2011)}, in context of some special type of classes of starlike functions.
In the following results, we present a few applications to Theorem \ref{t16}.
\begin{theorem}\label{t17}
Let $p\in\mathcal{H}_{1},$ such that 
\[|zp'(z)-1|<\frac{\sin 1}{2}\approx 0.420\ldots,\]
then $p(z)\prec \cosh \sqrt{z}.$
\end{theorem}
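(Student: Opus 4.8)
The plan is to apply Theorem~\ref{t16}(i) with $n=1$, $q(z)=\cosh\sqrt{z}$, and the admissible function $\psi(r,s,t;z):=s$, so that $\psi(p(z),zp'(z),z^{2}p''(z);z)=zp'(z)$. Take $\Omega$ to be the open disc $\{w\in\mathbb{C}:|w-1|<(\sin 1)/2\}$; then the hypothesis $|zp'(z)-1|<(\sin 1)/2$ is precisely the statement $\psi(p(z),zp'(z),z^{2}p''(z);z)\in\Omega$. It therefore suffices to verify that $\psi\in\Psi(q_{\varrho})$, i.e.\ that the admissibility conditions \eqref{e56} hold: whenever $r=\cosh\sqrt{\xi}$, $s=\tfrac{m}{2}\sqrt{\xi}\sinh\sqrt{\xi}$ with $|\xi|=1$ and $m\geq 1$, and the third condition on $\operatorname{Re}(1+t/s)$ holds, we must show $\psi(r,s,t;z)=s\notin\Omega$, i.e.\ $|s-1|\geq (\sin 1)/2$.

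First I would reduce the verification to a bound on $|s|$: since $\Omega$ is the disc of radius $(\sin 1)/2$ centered at $1$, it is enough to show either $|s|\geq 1+(\sin 1)/2$ or, more usefully here, that $s$ lies outside that disc. The key estimate is a \emph{lower bound} for $\bigl|\tfrac{m}{2}\sqrt{\xi}\sinh\sqrt{\xi}\bigr|$ over $|\xi|=1$. Since $m\geq 1$ and $|\sqrt\xi|=1$, we have $|s|\geq \tfrac12\bigl|\sqrt{\xi}\sinh\sqrt{\xi}\bigr|=\tfrac12|\sinh\sqrt\xi|$. Writing $\xi=e^{i\theta}$ with $-\pi<\theta\le\pi$, so $\sqrt\xi=e^{i\theta/2}$ with $\theta/2\in(-\pi/2,\pi/2]$, I would show that $\theta\mapsto|\sinh(e^{i\theta/2})|^{2}=\sinh^{2}(\cos(\theta/2))+\sin^{2}(\sin(\theta/2))$ attains its minimum on this range at the endpoint $\xi=-1$ (i.e.\ $\theta/2=\pm\pi/2$), where $|\sinh(e^{\pm i\pi/2})|=|\sinh(\pm i)|=|\sin 1|=\sin 1$. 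Hence $|s|\geq \tfrac12\sin 1$, and moreover at the extremal point $s=\tfrac{m}{2}\,i\,\sinh i\cdot(\pm 1)$ is purely imaginary (up to sign), so $|s-1|=\sqrt{1+|s|^{2}}\geq\sqrt{1+\tfrac14\sin^{2}1}>\tfrac12\sin 1$; for all other $\xi$ the bound $|s|\ge\tfrac12\sin1$ together with a short case check still forces $|s-1|\ge(\sin1)/2$. In fact, the cleanest route is to observe that the image of $\{\tfrac12\sqrt\xi\sinh\sqrt\xi:|\xi|=1\}$ scaled by $m\ge1$ never enters the small disc centered at $1$; the minimum of $|s-1|$ over the admissible set is attained at $\xi=-1$, $m=1$, giving exactly the value $(\sin 1)/2$ as the boundary case, which is why the constant $(\sin 1)/2$ is sharp.

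With $\psi\in\Psi(q_{\varrho})$ established, Theorem~\ref{t16}(i) immediately yields $p(z)\prec\cosh\sqrt{z}$, completing the proof. The main obstacle is the extremal analysis of $|\sinh(e^{i\theta/2})|$ on $\theta\in(-\pi,\pi]$: one must confirm that the minimum modulus is achieved at $\xi=-1$ rather than in the interior, and then check that the resulting bound on $|s|$ is enough to keep $s$ out of the disc $\{|w-1|<(\sin1)/2\}$ for \emph{every} admissible $\xi$ and every $m\ge1$ — in particular ruling out the possibility that some $s$ with small modulus but argument near $0$ slips into $\Omega$. Since the third admissibility condition (the one on $\operatorname{Re}(1+t/s)$) plays no role for this particular $\psi$ (which does not depend on $t$), it may simply be discarded, which streamlines the argument considerably.
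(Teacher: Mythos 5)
Your overall framework (invoke Theorem \ref{t16}(i) with a suitable admissible $\psi$) is the same as the paper's, but the admissibility verification at the heart of your argument is false. With $\psi(r,s,t;z)=s$ and $\Omega=\{w:|w-1|<(\sin 1)/2\}$, you must show $|s-1|\geq(\sin 1)/2$ for every $s=\tfrac{m}{2}\sqrt{\xi}\sinh\sqrt{\xi}$ with $|\xi|=1$, $m\geq 1$. This fails: at $\xi=1$, $m=1$ one gets $s=\tfrac{1}{2}\sinh 1\approx 0.588$, so $|s-1|\approx 0.412<\tfrac{1}{2}\sin 1\approx 0.421$, i.e.\ $s\in\Omega$; worse, taking $m=2/\sinh 1\approx 1.70\ (\geq 1)$ gives $s=1$, the very center of $\Omega$. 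Your claim that the minimum of $|s-1|$ over the admissible set is attained at $\xi=-1$, $m=1$ with value $(\sin 1)/2$ is also incorrect: there $\sqrt{\xi}=i$ and $\sinh i=i\sin 1$, so $s=-\tfrac{m}{2}\sin 1$ is real and negative, giving $|s-1|=1+\tfrac{m}{2}\sin 1$. The lower bound $|s|\geq\tfrac{1}{2}\sin 1$ (which your extremal analysis of $|\sinh e^{i\theta/2}|$ does correctly yield, with minimum $\sin 1$ at $\xi=-1$) can only keep $s$ out of a disc centered at $0$, never out of a disc centered at $1$; no ``short case check'' can repair this.

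This is exactly why the paper's proof takes $\psi(r,s,t;z)=1+s$ with the same $\Omega$: admissibility then reduces to $|\psi(r,s,t;z)-1|=|s|=\tfrac{m}{2}|\sinh e^{i\theta/2}|\geq\tfrac{1}{2}\sin 1$, which is the true estimate. With that choice, however, the conclusion of Theorem \ref{t16} is triggered by $1+zp'(z)\in\Omega$, i.e.\ by $|zp'(z)|<(\sin 1)/2$; the hypothesis as printed, $|zp'(z)-1|<(\sin 1)/2$, is in fact vacuous for $p\in\mathcal{H}_{1}$ (at $z=0$ one has $zp'(z)=0$, so the inequality cannot hold there), so the ``$-1$'' in the statement is evidently a misprint. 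By taking the printed hypothesis at face value you were forced into the choice $\psi(r,s,t;z)=s$, for which admissibility genuinely fails; the workable route is the paper's, which proves the implication from $|zp'(z)|<(\sin 1)/2$.
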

\begin{proof}
 Suppose $\Omega=\{w:|w-1|<(\sin 1)/2\}.$ Let $\psi(p(z),zp'(z),z^{2}p''(z);z)$ be a function defined on $\mathbb{C}^{3}\times \mathbb{D},$ given by $\psi(r,s,t;z)=1+s.$
We need to show that for $(r,s,t)\in\mathbb{C}^{3}$ satisfies admissibility conditions given in \eqref{e56}. For $m\geq 1,$ consider
 \begin{align*}
 |\psi(r,s,t;z)-1|=|s|=\left|\frac{m}{2}\sqrt{\xi}\sinh \sqrt{\xi}\right|,
 \end{align*}
then for $\xi=e^{i\theta},$ where $-\pi< \theta \leq \pi,$ we have
  \begin{align*}
 |\psi(r,s,t;z)-1|=\frac{m}{2}\left|\sinh e^{i\theta/2} \right| \geq \frac{1}{2} \sin 1.
 \end{align*}
 This means $\psi(r,s,t;z)\notin\Omega$ for each $r,s,t$ satisfying \eqref{e56} and therefore, $\psi\in\Psi(\Omega,q).$ Finally, Theorem \ref{t16} leads to the required conclusion.
 \end{proof}

 \begin{theorem}\label{t18}
 Let $p\in\mathcal{H}_{1},$ such that 
\[\left|\frac{zp'(z)}{p(z)}\right|<\frac{\tanh 1}{2}\approx 0.380\ldots,\] then $p(z)\prec \cosh \sqrt{z}.$
 \end{theorem}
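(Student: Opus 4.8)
The plan is to apply Theorem~\ref{t16}(i) with the admissible function coming from the expression $zp'(z)/p(z)$, exactly paralleling the proof of Theorem~\ref{t17}. First I would set $\Omega=\{w\in\mathbb{C}:|w|<(\tanh 1)/2\}$ and define $\psi:\mathbb{C}^{3}\times\mathbb{D}\to\mathbb{C}$ by $\psi(r,s,t;z)=s/r$, so that $\psi(p(z),zp'(z),z^{2}p''(z);z)=zp'(z)/p(z)$, which is analytic in $\mathbb{D}$ since $p$ is non-vanishing (note $p(0)=1$ forces this is the relevant normalization; one should observe $r=q(\xi)=\cosh\sqrt{\xi}\neq 0$ for $|\xi|=1$ so $\psi$ is well-defined on the admissibility set). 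Then I must verify $\psi\in\Psi(q_\varrho)$, i.e. that $\psi(r,s,t;z)\notin\Omega$ whenever $(r,s,t;z)$ satisfies the conditions in \eqref{e56}.

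The core computation: with $r=\cosh\sqrt{\xi}$ and $s=\frac{m}{2}\sqrt{\xi}\sinh\sqrt{\xi}$ for $\xi=e^{i\theta}$, $-\pi<\theta\le\pi$, and $m\ge 1$, we have
\[
\left|\frac{s}{r}\right|=\frac{m}{2}\left|\frac{\sqrt{\xi}\sinh\sqrt{\xi}}{\cosh\sqrt{\xi}}\right|=\frac{m}{2}\left|\sqrt{\xi}\tanh\sqrt{\xi}\right|\ge\frac12\left|e^{i\theta/2}\tanh e^{i\theta/2}\right|.
\]
The remaining task is to show $\min_{-\pi<\theta\le\pi}\left|e^{i\theta/2}\tanh e^{i\theta/2}\right|=\tanh 1$, attained at $\theta=0$ (equivalently $\xi=1$, $\sqrt{\xi}=1$). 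Since $|e^{i\theta/2}|=1$, this reduces to showing $|\tanh e^{i\theta/2}|\ge\tanh 1$ for all $\theta$, with the square-root branch chosen as in \eqref{e57} so that $\sqrt{\xi}$ ranges over the right half of the unit circle. This gives $|zp'(z)/p(z)|=|s/r|\ge(\tanh 1)/2$ on the admissibility set, hence $\psi(r,s,t;z)\notin\Omega$, so $\psi\in\Psi(q_\varrho)$, and Theorem~\ref{t16}(i) yields $p(z)\prec\cosh\sqrt{z}$.

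The main obstacle is the extremal estimate $|\tanh e^{i\theta/2}|\ge\tanh 1$. I would handle it by writing $w=e^{i\theta/2}=\cos(\theta/2)+i\sin(\theta/2)$ and using the identity
\[
|\tanh w|^{2}=\frac{\sinh^{2}(\RE w)+\sin^{2}(\IM w)}{\cosh^{2}(\RE w)-\sin^{2}(\IM w)}=\frac{\cosh(2\RE w)-\cos(2\IM w)}{\cosh(2\RE w)+\cos(2\IM w)}.
\]
Setting $u=\RE w=\cos(\theta/2)\in[0,1]$ and noting $2\IM w=2\sin(\theta/2)$, one checks this is a decreasing function of $\cos(2\IM w)$ and an increasing function of $\cosh(2u)$; since on the unit circle $u$ and $\IM w$ are linked by $u^{2}+(\IM w)^{2}=1$, a short one-variable analysis shows the minimum occurs at $u=1$, $\IM w=0$, giving $|\tanh 1|^{2}$. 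This is the only genuinely non-routine step; the rest of the argument is a direct transcription of the admissibility framework already set up in Definition~\ref{def1} and Theorem~\ref{t16}, together with the pattern established in the proof of Theorem~\ref{t17}.
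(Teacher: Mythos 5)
Your proposal is correct and follows essentially the same route as the paper: take $\Omega=\{w:|w|<(\tanh 1)/2\}$, $\psi(r,s,t;z)=s/r$, observe $|s/r|=\tfrac{m}{2}|\sqrt{\xi}\tanh\sqrt{\xi}|\ge\tfrac{1}{2}\tanh 1$ on the admissibility set \eqref{e56}, and invoke Theorem \ref{t16}(i). In fact you give more detail than the paper on the extremal estimate $|\tanh e^{i\theta/2}|\ge\tanh 1$ (which the paper simply asserts, here and in Theorem \ref{t11}); just note that your intermediate monotonicity remark is imprecise, since $\partial_a\frac{a-c}{a+c}=\frac{2c}{(a+c)^2}$ has the sign of $c=\cos(2\IM w)$, so the conclusion really does rest on the one-variable analysis along $u^{2}+(\IM w)^{2}=1$ that you defer to, which does confirm the minimum at $\theta=0$.
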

\begin{proof}
 Let $\Omega=\{w:|w|<(\tanh 1)/2\}$ 
and $\psi(p(z),zp'(z),z^{2}p''(z);z)$ be a function defined on $\mathbb{C}^{3}\times \mathbb{D},$ given by $\psi(r,s,t;z)=s/r.$
We need to show that for $(r,s,t)\in\mathbb{C}^{3}$ satisfying conditions \eqref{e56} leads to $\psi(r,s,t;z)\notin\Omega.$ Consider,
\begin{align*}
|\psi(r,s,t;z)|=\left|\frac{s}{r}\right|&=\left|\frac{m}{2}\sqrt{\xi}\tanh \sqrt{\xi}\right|
\geq \frac{m}{2}\tanh 1.
\end{align*}
Thus for $m\geq 1,$ we conclude that $\psi(r,s,t;z)\notin\Omega$ for each $r,s,t$ satisfying \eqref{e56}. Thus  $\psi\in\Psi(\Omega,q)$ and Theorem \ref{t16} gives that $p(z)\prec\cosh \sqrt{z}.$
 \end{proof}

 \begin{theorem}\label{t19}
 Let $p\in\mathcal{H}_{1},$ such that 
\[\left|\frac{zp'(z)}{p^{2}(z)}-1\right|<\frac{1}{2}\sech 1 \tanh 1 \approx 0.246\ldots,\] then $p(z)\prec \cosh \sqrt{z}.$
 \end{theorem}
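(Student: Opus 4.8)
The plan is to run the admissibility argument of Theorems~\ref{t17} and~\ref{t18} once more. Take
$$\Omega:=\bigl\{w\in\mathbb{C}:\ |w-1|<\tfrac12\,\sech 1\,\tanh 1\bigr\},$$
and define $\psi\colon\mathbb{C}^{3}\times\mathbb{D}\to\mathbb{C}$ by $\psi(r,s,t;z)=s/r^{2}$. With this choice the hypothesis of the theorem is exactly $\psi(p(z),zp'(z),z^{2}p''(z);z)\in\Omega$, so by Theorem~\ref{t16}(i) it remains only to show $\psi\in\Psi(q_{\varrho})$ for $q(z)=\cosh\sqrt z$.

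To that end I would pick a triple $(r,s,t;z)$ meeting the admissibility conditions of Definition~\ref{def1}. Since $\psi$ is independent of $t$, the requirement on $\operatorname{Re}(1+t/s)$ plays no role, and we use only $r=\cosh\sqrt\xi$ and $s=\tfrac{m}{2}\sqrt\xi\,\sinh\sqrt\xi$ with $|\xi|=1$ and $m\geq1$. Then
$$\psi(r,s,t;z)=\frac{s}{r^{2}}=\frac{m}{2}\,\frac{\sqrt\xi\,\sinh\sqrt\xi}{\cosh^{2}\sqrt\xi}=\frac{m}{2}\,\sqrt\xi\,\tanh\sqrt\xi\,\sech\sqrt\xi .$$
Writing $\xi=e^{i\theta}$ ($-\pi<\theta\leq\pi$), so that $\operatorname{Re}\sqrt\xi\geq0$, I would expand the real and imaginary parts of this expression using $\sinh(a+ib)=\sinh a\cos b+i\cosh a\sin b$ and $\cosh(a+ib)=\cosh a\cos b+i\sinh a\sin b$ with $a=\cos(\theta/2)$, $b=\sin(\theta/2)$, together with the identities $|\sinh e^{i\phi}|^{2}=\sinh^{2}(\cos\phi)+\sin^{2}(\sin\phi)$ and $|\cosh e^{i\phi}|^{2}=\cosh^{2}(\cos\phi)-\sin^{2}(\sin\phi)$, and then bound $\bigl|\,s/r^{2}-1\,\bigr|$ from below. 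Since $\bigl|\,s/r^{2}-1\,\bigr|$ is an even function of $\theta$, it suffices to examine $\theta\in[0,\pi]$; a critical-point analysis of $\theta\mapsto\bigl|\,s/r^{2}-1\,\bigr|$ (for $m\geq1$) should pin the extremum at $\xi=1$, where $s/r^{2}$ is the positive real number $\tfrac{m}{2}\sech 1\,\tanh 1$, and yield $\bigl|\,s/r^{2}-1\,\bigr|\geq\tfrac12\,\sech 1\,\tanh 1$. This makes $\psi(r,s,t;z)\notin\Omega$, hence $\psi\in\Psi(q_{\varrho})$, and Theorem~\ref{t16}(i) then gives $p(z)\prec\cosh\sqrt z$.

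The main obstacle is precisely this boundary estimate. In Theorems~\ref{t17} and~\ref{t18} the admissible data produced the transparent moduli $|s|$ and $|s/r|$, whose extrema over $\partial\mathbb{D}$ are located by a one-line comparison of $\sinh$ with $\sin$ and of $\tanh$ with $\tan$; here, by contrast, $\tfrac{m}{2}\sqrt\xi\,\tanh\sqrt\xi\,\sech\sqrt\xi$ is genuinely complex-valued on $\partial\mathbb{D}$, so one must control simultaneously its modulus and its distance to the point~$1$ and certify that the infimum over $\xi\in\partial\mathbb{D}$ and $m\geq1$ is attained at $\xi=1$. As elsewhere in the paper, a graphical plot of the curve $\theta\mapsto\tfrac{m}{2}e^{i\theta/2}\tanh(e^{i\theta/2})\sech(e^{i\theta/2})$ against the disc $\Omega$ can be used to confirm that this is the correct extremum.
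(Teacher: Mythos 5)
Your reduction to Theorem \ref{t16}(i) is the right framework, but the estimate you defer as the ``main obstacle'' is not merely hard --- it is false, so the proof cannot be completed along this route. In Definition \ref{def1} the parameter $m$ ranges over all values $\geq 1$, and for $\xi=1$ your admissible quantity is the positive real number $s/r^{2}=\tfrac{m}{2}\sech 1\tanh 1$. As $m$ increases this sweeps through the point $1$ (indeed $m=2\cosh^{2}1/\sinh 1\approx 4.05$ gives $s/r^{2}=1$ exactly, and already $m=4$ gives $|s/r^{2}-1|\approx 0.013$), so $\inf_{|\xi|=1,\;m\geq 1}|s/r^{2}-1|=0$. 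Hence $\psi(r,s,t;z)=s/r^{2}$ does \emph{not} belong to $\Psi(q_{\varrho})$ for $\Omega=\{w:|w-1|<\tfrac{1}{2}\sech 1\tanh 1\}$: the infimum is not attained at $m=1$, $\xi=1$, because the factor $m$ works against you, and no critical-point analysis in $\theta$ alone (nor any plot) can repair this.

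The paper's proof avoids exactly this trap by choosing $\psi(r,s,t;z)=1+s/r^{2}$ with the same disc $\Omega$ centred at $1$. Then the quantity to be controlled is $|\psi(r,s,t;z)-1|=|s/r^{2}|=\tfrac{m}{2}\,|\sinh\sqrt{\xi}|/|\cosh\sqrt{\xi}|^{2}$, which is increasing in $m$ and minimised on $\partial\mathbb{D}$ at $\xi=1$, giving at once $|s/r^{2}|\geq \sinh 1/(2\cosh^{2}1)=\tfrac{1}{2}\sech 1\tanh 1$; only a modulus bound is needed, never a distance-to-$1$ bound for a genuinely complex quantity. Note the corresponding point on the hypothesis side: with the paper's $\psi$, Theorem \ref{t16}(i) is applied to the condition $1+zp'(z)/p^{2}(z)\in\Omega$, i.e. $|zp'(z)/p^{2}(z)|<\tfrac{1}{2}\sech 1\tanh 1$. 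Your literal reading $|zp'(z)/p^{2}(z)-1|<\tfrac{1}{2}\sech 1\tanh 1$ cannot even hold near $z=0$, since $p(0)=1$ forces $zp'(z)/p^{2}(z)\to 0$ and hence $|zp'(z)/p^{2}(z)-1|\to 1$; as in Theorem \ref{t17}, the stated inequality has to be read through the admissible function $\psi=1+s/r^{2}$, and it is precisely this reading that makes the admissibility estimate provable.
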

\begin{proof}
 Suppose $\Omega=\{w:|w-1|<(  \sech 1 \tanh 1)/2\}.$ Let $\psi(p(z),zp'(z),z^{2}p''(z);z)$ be a function defined on $\mathbb{C}^{3}\times \mathbb{D},$ given by $\psi(r,s,t;z)=1+s/r^{2},$ then for $m\geq 1,$ we have
 \begin{align*}
 \left|\psi(r,s,t;z)-1\right|=\left|\frac{s}{r^{2}}\right|=\frac{m}{2}\left|\frac{\sinh \sqrt{\xi}}{\cosh^{2} \sqrt{\xi}}\right| 
 \geq \frac{\sinh 1}{2\cosh^{2}1}.
 \end{align*}
This gives that $\psi(r,s,t;z)\notin\Omega$ for each $r,s,t$ satisfying \eqref{e56}, therefore,  $\psi\in\Psi(\Omega,q).$ Thus Theorem \ref{t16} leads to the required conclusion.
\end{proof} 


On substituting $p(z)=zf'(z)/f(z)$ in Theorem \ref{t17} - Theorem \ref{t19}, we deduce the following:
\begin{corollary}
If $f(z)=z+a_{2}z^{2}+a_{3}z^{3}+\ldots$ satisfies any of the following inequalities:
\begin{enumerate}[(i)]
\item $\left| \dfrac{z^{2}f''(z)}{f(z)}+\dfrac{zf'(z)}{f(z)}-z^{2}\left(\dfrac{f'(z)}{f(z)}\right)^{2}-1\right|<\dfrac{\sin 1}{2}$ or
\item $\left|1+\dfrac{zf''(z)}{f'(z)}-\dfrac{zf'(z)}{f(z)}\right|<\dfrac{\tanh 1}{2}$  or
\item $\left|\left(1+\dfrac{zf''(z)}{f'(z)}-\dfrac{zf'(z)}{f(z)}\right)\left(\dfrac{zf'(z)}{f(z)}\right)^{-1}-1\right|<\dfrac{1}{2}\sech 1 \tanh 1,$ 
\end{enumerate}
then $f\in\mathcal{S}^{*}_{\varrho}.$
\end{corollary}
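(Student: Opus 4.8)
The plan is to obtain all three implications at once from the single substitution $p(z)=zf'(z)/f(z)$ into Theorems \ref{t17}, \ref{t18} and \ref{t19}, respectively. First I would note that, since $f(z)=z+a_{2}z^{2}+\cdots$ and each of the three hypotheses forces $f(z)/z$ to be non-vanishing in $\mathbb{D}$ (all three involve $f$ in a denominator), the function $p(z)=zf'(z)/f(z)$ is analytic in $\mathbb{D}$ with $p(0)=1$; that is, $p\in\mathcal{H}_{1}$, so the three theorems are applicable to it.

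The computational core is the logarithmic-derivative identity: writing $\log p(z)=\log z+\log f'(z)-\log f(z)$ and differentiating gives
\[\frac{zp'(z)}{p(z)}=1+\frac{zf''(z)}{f'(z)}-\frac{zf'(z)}{f(z)}.\]
From this one identity the three cases fall out. Multiplying through by $p(z)=zf'(z)/f(z)$ yields
\[zp'(z)=\frac{zf'(z)}{f(z)}\left(1+\frac{zf''(z)}{f'(z)}-\frac{zf'(z)}{f(z)}\right)=\frac{z^{2}f''(z)}{f(z)}+\frac{zf'(z)}{f(z)}-z^{2}\left(\frac{f'(z)}{f(z)}\right)^{2},\]
so $|zp'(z)-1|$ is exactly the quantity in (i); the expression $zp'(z)/p(z)$ is literally the quantity in (ii); and dividing once more, $zp'(z)/p^{2}(z)=\bigl(1+zf''(z)/f'(z)-zf'(z)/f(z)\bigr)\bigl(zf'(z)/f(z)\bigr)^{-1}$, whose distance from $1$ is the quantity in (iii).

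Hence, under hypothesis (i) we have $|zp'(z)-1|<(\sin 1)/2$, and Theorem \ref{t17} gives $p(z)\prec\cosh\sqrt{z}$; under (ii) we have $|zp'(z)/p(z)|<(\tanh 1)/2$, and Theorem \ref{t18} gives $p(z)\prec\cosh\sqrt{z}$; under (iii) we have $|zp'(z)/p^{2}(z)-1|<(\sech 1\,\tanh 1)/2$, and Theorem \ref{t19} gives $p(z)\prec\cosh\sqrt{z}$. In each case $p(z)\prec\cosh\sqrt{z}$ unravels to $zf'(z)/f(z)\prec\cosh\sqrt{z}$, i.e. $f\in\mathcal{S}^{*}_{\varrho}$.

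There is no substantive obstacle here: the entire content is the algebraic matching of the three expressions in $f$ with the quantities $zp'(z)-1$, $zp'(z)/p(z)$ and $zp'(z)/p^{2}(z)-1$ that appear in the three cited theorems, plus the bookkeeping remark that $p=zf'/f$ lies in $\mathcal{H}_{1}$. The only point deserving a word of care is the implicit requirement that $f(z)/z$ not vanish in $\mathbb{D}$, which is already built into the meaning of the hypotheses.
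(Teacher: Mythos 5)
Your proposal is correct and is exactly the paper's route: the paper deduces the corollary by substituting $p(z)=zf'(z)/f(z)$ into Theorems \ref{t17}--\ref{t19}, and your identity $zp'(z)/p(z)=1+zf''(z)/f'(z)-zf'(z)/f(z)$ together with the resulting expressions for $zp'(z)-1$ and $zp'(z)/p^{2}(z)-1$ is precisely the algebraic matching that substitution requires. The added remark that the hypotheses force $f(z)/z\neq 0$ so that $p\in\mathcal{H}_{1}$ is a harmless (and sensible) bit of bookkeeping the paper leaves implicit.
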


\end{document}